\numberwithin{equation}{section}
\numberwithin{figure}{section}
\theoremstyle{plain}
\newtheorem{thm}{\protect\theoremname}[section]
\theoremstyle{definition}
\newtheorem*{defn*}{\protect\definitionname}
\theoremstyle{remark}
\newtheorem*{rem*}{\protect\remarkname}
\theoremstyle{definition}
\newtheorem{defn}[thm]{\protect\definitionname}
\theoremstyle{plain}
\newtheorem{lem}[thm]{\protect\lemmaname}
\theoremstyle{plain}
\newtheorem{prop}[thm]{\protect\propositionname}
\theoremstyle{plain}
\newtheorem{cor}[thm]{\protect\corollaryname}
\theoremstyle{plain}
\newtheorem*{thm*}{\protect\theoremname}
\theoremstyle{remark}
\newtheorem{rem}[thm]{\protect\remarkname}
\providecommand{\corollaryname}{Corollary}
\providecommand{\definitionname}{Definition}
\providecommand{\lemmaname}{Lemma}
\providecommand{\propositionname}{Proposition}
\providecommand{\remarkname}{Remark}
\providecommand{\theoremname}{Theorem}
\begin{document}

\title{Dimension of Bernoulli Convolutions in $\mathbb{R}^{d}$}

\author{\noindent Ariel Rapaport and Haojie Ren}

\subjclass[2000]{\noindent 28A80, 37C45.}

\keywords{Bernoulli convolution, self-affine set, self-affine measure, dimension,
entropy.}

\thanks{This research was supported by the Israel Science Foundation (grant
No. 619/22). AR received support from the Horev Fellowship at the
Technion -- Israel Institute of Technology.}
\begin{abstract}
For $(\lambda_{1},...,\lambda_{d})=\lambda\in(0,1)^{d}$ with $\lambda_{1}>...>\lambda_{d}$,
denote by $\mu_{\lambda}$ the Bernoulli convolution associated to
$\lambda$. That is, $\mu_{\lambda}$ is the distribution of the random
vector $\sum_{n\ge0}\pm\left(\lambda_{1}^{n},...,\lambda_{d}^{n}\right)$,
where the $\pm$ signs are chosen independently and with equal weight.
Assuming for each $1\le j\le d$ that $\lambda_{j}$ is not a root
of a polynomial with coefficients $\pm1,0$, we prove that the dimension
of $\mu_{\lambda}$ equals $\min\left\{ \dim_{L}\mu_{\lambda},d\right\} $,
where $\dim_{L}\mu_{\lambda}$ is the Lyapunov dimension. More generally,
we obtain this result in the context of homogeneous diagonal self-affine
systems on $\mathbb{R}^{d}$ with rational translations.

The proof extends to higher dimensions the works of Breuillard and
Varjú and Varjú regarding Bernoulli convolutions on the real line.
The main novelty and contribution of the present work lies in an extension
of an entropy increase result, due to Varjú, in which the amount of
increase in entropy is given explicitly. The extension of this result
to the higher-dimensional non-conformal case requires significant
new ideas.
\end{abstract}

\maketitle

\section{Introduction}

\subsection{Main result for Bernoulli convolutions}

Let $d\ge1$ be an integer and let $(\lambda_{1},...,\lambda_{d})=\lambda\in(0,1)^{d}$
be such that $\lambda_{1}>...>\lambda_{d}$. Denote by $\mu_{\lambda}$
the distribution of the random vector $\sum_{n\ge0}\pm\left(\lambda_{1}^{n},...,\lambda_{d}^{n}\right)$,
where the $\pm$ signs are chosen independently and with equal weight.
The measure $\mu_{\lambda}$ is called the Bernoulli convolution associated
to $\lambda$. Bernoulli convolutions were studied by many authors
over the years, especially in the case $d=1$. In higher dimensions,
they are perhaps the most basic example of non-conformal stationary
fractal measures.

One of the most natural and studied questions regarding Bernoulli
convolutions is to determine their dimension. A Borel probability
measure $\theta$ on $\mathbb{R}^{d}$ is said to be exact dimensional
if there exists a number $\dim\theta$ such that
\[
\underset{\delta\downarrow0}{\lim}\frac{\log\theta(B(x,\delta))}{\log\delta}=\dim\theta\text{ for }\theta\text{-a.e. }x,
\]
where $B(x,\delta)$ is the closed ball with centre $x$ and radius
$\delta$. By the work of Feng and Hu \cite{FH-dimension}, it follows
that $\mu_{\lambda}$ is always exact dimensional.

The dimension of $\mu_{\lambda}$ has a natural upper bound. It is
called the Lyapunov dimension and is denoted by $\dim_{L}\mu_{\lambda}$.
Setting
\[
m:=\max\left\{ 0\le k\le d\::\:\Pi_{j=1}^{k}\lambda_{j}\ge1/2\right\} ,
\]
the Lyapunov dimension is defined as follows:
\[
\dim_{L}\mu_{\lambda}:=\begin{cases}
m+\frac{\log2+\sum_{j=1}^{m}\log\lambda_{j}}{-\log\lambda_{m+1}} & ,\text{ if }m<d\\
d\frac{\log2}{-\sum_{j=1}^{d}\log\lambda_{j}} & ,\text{ if }m=d
\end{cases}.
\]
It always holds that
\begin{equation}
\dim\mu_{\lambda}\le\min\left\{ \dim_{L}\mu_{\lambda},d\right\} ,\label{eq:natural ub for dim(mu_lambda)}
\end{equation}
and it is expected equality occurs in the absence of algebraic obstructions.

Suppose next that $d=1$, in which case $\lambda$ is a real number
in the interval $(0,1)$. When $0<\lambda<1/2$, it holds that $\mu_{\lambda}$
is a strongly separated self-similar measure, and so it is easy to
see that $\dim\mu_{\lambda}=\dim_{L}\mu_{\lambda}=\frac{\log2}{-\log\lambda}$.
When $1/2\le\lambda<1$ is algebraic but not a root of a nonzero polynomial
with coefficients $\pm1,0$, it follows by the work of Hochman \cite{Ho1}
that $\dim\mu_{\lambda}=1$. When $1/2<\lambda<1$ is transcendental,
the equality $\dim\mu_{\lambda}=1$ was established by Varjú \cite{Var-Bernoulli}.
By the works of Erd\H{o}s \cite{erdos1939family} and Garsia \cite{garsia_singularity},
it follows that $\dim\mu_{\lambda}<1$ whenever $1/2<\lambda<1$ is
a reciprocal of a Pisot number\footnote{Recall that a Pisot number is a positive algebraic integer all of
whose Galois conjugates are inside the open unit disk.}. It is an open problem whether the inequality $\dim\mu_{\lambda}<1$
for $1/2<\lambda<1$ implies that $\lambda^{-1}$ is Pisot.

Next, assume that $d=2$. When $\lambda_{1}<1/2$, it is again easy
to see that $\dim\mu_{\lambda}=\dim_{L}\mu_{\lambda}$. Przytycki
and Urba\'{n}ski \cite{MR1002918} considered the situation in which
$\lambda_{2}=1/2$. They established that $\dim\mu_{(\lambda_{1},1/2)}=\dim_{L}\mu_{(\lambda_{1},1/2)}$
whenever $\mu_{\lambda_{1}}$ is absolutely continuous (which, according
to Solomyak \cite{MR1356783}, holds for Lebesgue a.e. $\lambda_{1}\in(1/2,1)$).
A statement of the same spirit has been obtained in \cite{MR1301464}.
It is also shown in \cite{MR1002918} that strict inequality occurs
in (\ref{eq:natural ub for dim(mu_lambda)}) when $\lambda_{2}=1/2$
and $\lambda_{1}^{-1}$ is Pisot. In the case $\lambda_{2}>1/2$,
Shmerkin \cite{MR2269414} established that the Hausdorff dimension
of the support of $\mu_{\lambda}$ equals the Lyapunov dimension for
Lebesgue a.e. $\lambda=(\lambda_{1},\lambda_{2})$ with $\lambda_{1}\lambda_{2}<1/2<\lambda_{2}$.

For general $d\ge1$, in \cite{Rap_SA_diag} the first author extended
Hochman's work \cite{Ho1} and proved that equality holds in (\ref{eq:natural ub for dim(mu_lambda)})
whenever $\lambda_{1},...,\lambda_{d}$ are algebraic but not roots
of polynomials with coefficients $\pm1,0$. By extending Varjú's work
\cite{Var-Bernoulli}, in this paper we are able to remove the restrictive
algebraicity assumption. The following theorem is our main result
for Bernoulli convolutions.
\begin{thm}
\label{thm:main bernoulli thm}Let $d\in\mathbb{Z}_{>0}$ and $(\lambda_{1},...,\lambda_{d})=\lambda\in(0,1)^{d}$
be with $\lambda_{1}>...>\lambda_{d}$, and suppose that $P(\lambda_{j})\ne0$
for every $1\le j\le d$ and nonzero polynomial $P$ with coefficients
$\pm1,0$. Then $\dim\mu_{\lambda}=\min\left\{ \dim_{L}\mu_{\lambda},d\right\} $.
\end{thm}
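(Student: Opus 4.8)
The plan is to follow the strategy pioneered by Breuillard--Varj\'o and Varj\'o for Bernoulli convolutions on the line, adapted to the non-conformal diagonal setting in $\mathbb{R}^{d}$. The overall dichotomy to establish is: either $\dim\mu_{\lambda}=\min\{\dim_{L}\mu_{\lambda},d\}$, or $\lambda_{j}$ is (for some $j$) extremely well approximated by algebraic numbers satisfying polynomial relations with coefficients in $\{-1,0,1\}$ -- in fact a root of such a polynomial, contradicting the hypothesis. Concretely, one first reduces to the homogeneous diagonal self-affine framework mentioned in the abstract: $\mu_{\lambda}$ is the self-affine measure for the system $\{x\mapsto \mathrm{diag}(\lambda_{1},\dots,\lambda_{d})x \pm (1,\dots,1)\}$, and one truncates to obtain, for each scale $n$, measures $\mu_{\lambda}^{(n)}$ supported on finitely many points which converge to $\mu_{\lambda}$. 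The aim is to show that the normalized entropy $\frac{1}{n}H(\mu_{\lambda}^{(n)})$ (measured against a suitable non-conformal partition adapted to the singular values $\lambda_{1}^{n},\dots,\lambda_{d}^{n}$) reaches the value forced by the Lyapunov dimension up to $o(1)$.

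The engine of the argument is an \emph{entropy increase} statement: if at some scale the entropy of $\mu_{\lambda}^{(n)}$ (or a conditional/slice thereof, organized according to the flag of coordinate subspaces $\mathbb{R}^{d}\supset\mathbb{R}^{d-1}\supset\dots$ on which the $\lambda_{j}$ act at different rates) is bounded away from its maximal possible value, then convolving with a rescaled copy of itself produces a definite, \emph{quantitatively explicit}, increase in entropy. This is the higher-dimensional non-conformal extension of Varj\'o's result flagged as the main novelty in the abstract. I would prove it by combining: (i) a Berry--Esseen / smoothing input controlling the Fourier decay or small-ball behavior of $\mu_{\lambda}^{(n)}$ at the relevant anisotropic scales; (ii) an inverse entropy theorem in the spirit of Hochman, in the multi-scale non-conformal form (so that near-maximal entropy growth at \emph{all} scales forces the measure to be close, at most scales and most places, to a combination of pieces that are either uniform in some coordinate directions or concentrated -- i.e., an affine-subspace structure); and (iii) an argument that the failure of entropy to increase propagates down to an algebraic relation. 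Iterating the increase statement from a scale where the bound fails, one would contradict the a priori upper bound $H(\mu_{\lambda}^{(n)})\le (\text{const})\cdot n$; hence the entropy must already be near-maximal, giving $\dim\mu_{\lambda}\ge\min\{\dim_{L}\mu_{\lambda},d\}$, and the reverse inequality is \eqref{eq:natural ub for dim(mu_lambda)}.

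It remains to discharge the alternative in the dichotomy, namely to show that \emph{if} entropy fails to increase then some $\lambda_{j}$ is a root of a $\{-1,0,1\}$-polynomial. Here one uses that a failure of entropy increase, via the inverse theorem, forces two rescaled copies of the truncated measure $\mu_{\lambda}^{(n)}$ to nearly coincide after an affine map tied to the $\lambda_{j}$'s; translating ``near-coincidence at exponentially fine scale'' into an exact statement uses that the atoms of $\mu_{\lambda}^{(n)}$ are $\{-1,1\}$-combinations of powers of $\lambda_{j}$, so a small gap between two distinct such atoms is bounded below by a separation estimate unless a $\{-2,-1,0,1,2\}$- (hence, after splitting, $\{-1,0,1\}$-) polynomial vanishes at $\lambda_{j}$. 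This is exactly the rational-translations / transcendence mechanism from the $d=1$ theory. The main obstacle -- and the place where ``significant new ideas'' are needed -- is step (ii): the inverse entropy theorem in the non-conformal setting, where the partition into cells of dimensions $\lambda_{1}^{n}\times\dots\times\lambda_{d}^{n}$ has wildly different side lengths, so the usual multiscale analysis must be run simultaneously at the $d$ distinct geometric rates; one has to keep track of how entropy is distributed among the coordinate flags, control the interaction between directions (a convolution that is efficient in one direction may be wasteful in another), and ensure the explicit entropy-increase constant survives the anisotropic bookkeeping. I expect the bulk of the work to be in setting up the right anisotropic notions of entropy at scale, proving a Kaimanovich--Vershik / component-measure decomposition adapted to the flag, and then pushing Varj\'o's quantitative increase through this decomposition uniformly in the aspect ratios.
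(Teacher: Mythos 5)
Your high-level outline captures two genuine components of the paper's proof (the homogeneous diagonal self-affine reformulation, and the quantitative entropy-increase statement in the anisotropic setting, proved via a Berry--Esseen input, an average-entropy version of Kaimanovich--Vershik, and non-conformal partitions), but it has a concrete gap in how you propose to close the argument, and it omits two structural ingredients the paper relies on.

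The gap is in the final ``discharge the alternative'' step. You claim that if entropy fails to increase, the near-coincidence of rescaled atoms forces a $\{-1,0,1\}$-polynomial to \emph{vanish exactly} at $\lambda_{j}$, invoking ``a separation estimate'' otherwise. That step would only work if $\lambda_{j}$ were algebraic: for transcendental $\lambda_{j}$ (which the paper explicitly must handle, and which is the main content after the algebraic case was settled in the first author's earlier work), $|P(\lambda_{j})|$ can be arbitrarily small without vanishing, and there is no uniform separation between distinct atoms of $\mu_{\lambda}^{(n)}$. This is precisely the obstruction that makes transcendental parameters hard. The paper never derives an exact $\{-1,0,1\}$-relation; instead it proves (Theorem \ref{thm:alg approx}) that dimension drop forces $\lambda$ to be approximated, at infinitely many scales $n$, by algebraic $\eta$ with each $\eta_{j}$ a root of a polynomial in $\mathcal{P}_{L_{0}}^{(n)}$, with $h_{RW}(\Phi^{\eta},p)<\kappa+\epsilon$, and with error $\le\exp(-n^{1/\epsilon})$. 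The contradiction then comes from number theory, not from exact vanishing: the entropy bound on $h_{RW}(\Phi^{\eta},p)$ forces a bound $M(\eta_{j_0})<M$ on the Mahler measure via the Breuillard--Varj\'o theorem (Theorem \ref{thm:lb on RW-ent for large Mahler}), and combining the approximation quality with Proposition \ref{prop:exists polynomials} (small $|P(\lambda_{j_0})|$), Dimitrov's lemma (Lemma \ref{lem:vesselin lemma}) then forces $\eta_{j_0}$ to be a zero of $P$ of unbounded order, which contradicts the root-counting bound of Lemma \ref{lem:There-is-function r}. Your proposal omits the Mahler-measure/random-walk-entropy link entirely, and without it the ``near-coincidence $\Rightarrow$ exact algebraic relation'' step cannot be repaired.

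A secondary omission: the proof of Theorem \ref{thm:main gen thm} proceeds by induction on $d$, using the Ledrappier--Young formula to reduce to the case $\dim\pi_{J}\mu_{\lambda}=|J|$ for all proper $J\subsetneq[d]$; only under that full-projections hypothesis are the non-saturation estimates (Proposition \ref{prop:non-satur}) available to run the entropy-increase machinery. Your proposal does not mention this reduction, yet it is what makes the non-saturation assumption in Theorem \ref{thm:effective ent inc result} checkable for $\mu_{\lambda}$ and its convolution factors.
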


In the next subsection, we present our results in the more general
setup of homogeneous diagonal self-affine systems with rational translations.

\subsection{Main result in the general setup}

Fix $d\ge1$ and let $\Phi=\{\varphi_{i}(x)=A_{i}x+a_{i}\}_{i\in\Lambda}$
be a finite collection of invertible affine contractions of $\mathbb{R}^{d}$.
Such a collection is called an affine iterated function system (IFS).
It is well known (see \cite{Hut}) that there exists a unique nonempty
compact $K_{\Phi}\subset\mathbb{R}^{d}$ with $K_{\Phi}=\cup_{i\in\Lambda}\varphi_{i}(K_{\Phi})$.
It is called the attractor or self-affine set corresponding to $\Phi$.

Certain natural measures are supported on $K_{\Phi}$. Fixing a probability
vector $p=(p_{i})_{i\in\Lambda}$, there exists a unique Borel probability
measure $\mu$ on $\mathbb{R}^{d}$ which satisfies the relation $\mu=\sum_{i\in\Lambda}p_{i}\cdot\varphi_{i}\mu$
(again, see \cite{Hut}), where $\varphi_{i}\mu$ is the push-forward
of $\mu$ via $\varphi_{i}$. It is supported on $K_{\Phi}$, and
is called the self-affine measure corresponding to $\Phi$ and $p$.
It is known that self-affine measures are always exact dimensional
(see \cite{FH-dimension,BK,MR4557759}).

Many mathematical problems surround self-affine sets and measures,
but perhaps the most natural one is to determine their dimension.
It has been studied by many authors, and its computation is one of
the major open problems in fractal geometry. In what follows, we denote
the Hausdorff dimension of $E\subset\mathbb{R}^{d}$ by $\dim_{H}E$.

In the 1980s, Falconer \cite{falconer1988hausdorff} introduced a
natural upper bound for the dimension of $K_{\Phi}$, which is called
the affinity dimension. It is denoted by $\dim_{A}\Phi$ and depends
only on the linear parts $\{A_{i}\}_{i\in\Lambda}$. Falconer has
shown that when $\Vert A_{i}\Vert_{\mathrm{op}}<1/2$ for $i\in\Lambda$,
and under a natural randomisation of the translations $\{a_{i}\}_{i\in\Lambda}$,
the equality
\begin{equation}
\dim_{H}K_{\Phi}=\min\left\{ d,\dim_{A}\Phi\right\} \label{eq:dim K =00003D dim_A}
\end{equation}
holds almost surely\footnote{In fact, Falconer proved this with $1/3$ as the upper bound on the
norms; it was subsequently shown by Solomyak \cite{So} that $1/2$
suffices.}.

Similarly, there exists a natural upper bound for the dimension of
$\mu$. It is denoted by $\dim_{L}(\Phi,p)$, is called the Lyapunov
dimension corresponding to $\Phi$ and $p$, and depends only on the
entropy of $p$ and the Lyapunov exponents corresponding to $\{A_{i}\}_{i\in\Lambda}$
and $p$. It has been shown by Jordan, Pollicott and Simon \cite{JPS}
that, under the same conditions as in Falconer's result, the equality
\begin{equation}
\dim\mu=\min\left\{ d,\dim_{L}(\Phi,p)\right\} \label{eq:dim=00005Cmu=00003D dim_L}
\end{equation}
holds almost surely. The definition of the Lyapunov dimension is given
in Section \ref{subsec:setup-and add notat}.

The last results show that (\ref{eq:dim K =00003D dim_A}) and (\ref{eq:dim=00005Cmu=00003D dim_L})
hold typically, but do not provide any explicit examples. It is of
course desirable to find explicit and verifiable conditions under
which these equalities hold. In recent years, and while assuming the
collection $\{A_{i}\}_{i\in\Lambda}$ is strongly irreducible, such
conditions have been obtained when $d=2$ (see \cite{BHR,HR,MoSh})
and $d=3$ (see \cite{MoSe,Rap_SA_Rd}).

An important subclass of self-affine systems, which is in a sense
opposite to the strongly irreducible case, is the one in which the
linear parts $\{A_{i}\}_{i\in\Lambda}$ are all diagonal. In this
paper, we always assume that we are in the diagonal situation. We
further assume that $\Phi$ is homogeneous, which means that $A_{i_{1}}=A_{i_{2}}$
for $i_{1},i_{2}\in\Lambda$. Thus, suppose that there exist $1>\lambda_{1}>...>\lambda_{d}>0$
such that for each $i\in\Lambda$
\begin{equation}
\varphi_{i}(x)=\left(\lambda_{1}x_{1}+a_{i,1},...,\lambda_{d}x_{d}+a_{i,d}\right)\text{ for }(x_{1},...,x_{d})=x\in\mathbb{R}^{d},\label{eq:form of varph_i}
\end{equation}
where $a_{i,1},...,a_{i,d}\in\mathbb{R}$. Note that $\Phi_{j}:=\left\{ t\rightarrow\lambda_{j}t+a_{i,j}\right\} _{i\in\Lambda}$
is a homogeneous affine IFS on $\mathbb{R}$ for each $1\le j\le d$.
\begin{defn*}
A homogeneous affine IFS $\Psi:=\{\psi_{i}\}_{i\in\Lambda}$ on $\mathbb{R}$
is said to be exponentially separated if there exists $c>0$ such
that $\left|\psi_{u_{1}}(0)-\psi_{u_{2}}(0)\right|\ge c^{n}$ for
all $n\ge1$ and distinct $u_{1},u_{2}\in\Lambda^{n}$, where $\psi_{u}:=\psi_{i_{1}}\circ...\circ\psi_{i_{n}}$
for $i_{1}...i_{n}=u\in\Lambda^{n}$.
\end{defn*}
In \cite{Rap_SA_diag}, the first author has shown that (\ref{eq:dim K =00003D dim_A})
and (\ref{eq:dim=00005Cmu=00003D dim_L}) hold whenever $\Phi_{1},...,\Phi_{d}$
are exponentially separated. In this paper, we show that when the
numbers $a_{i,j}$ are rational, the exponential separation assumption
can be relaxed considerably.
\begin{defn*}
We say that an affine IFS $\Psi:=\{\psi_{i}\}_{i\in\Lambda}$ on $\mathbb{R}$
has no exact overlaps if its elements generate a free semigroup. That
is, if $\psi_{u_{1}}\ne\psi_{u_{2}}$ for all distinct $u_{1},u_{2}\in\Lambda^{*}$,
where $\Lambda^{*}$ is the set of finite words over $\Lambda$.
\end{defn*}
\begin{rem*}
A homogeneous IFS $\Psi$ clearly has no exact overlaps whenever it
is exponentially separated. When $\Psi$ is defined by algebraic parameters,
the converse also holds true. In general, as shown by examples constructed
in \cite{baker2019iterated,BaranyKaenmakiSuper}, exponential separation
is a more restrictive condition than the absence of exact overlaps.
\end{rem*}
We can now state our main result, which is the following statement.
\begin{thm}
\label{thm:main gen thm}Let $\Phi=\{\varphi_{i}\}_{i\in\Lambda}$
be a homogeneous diagonal affine IFS on $\mathbb{R}^{d}$ with rational
translations. More precisely, suppose that there exist $1>\lambda_{1}>...>\lambda_{d}>0$
and $\left\{ a_{i,j}\::\:i\in\Lambda\text{ and }1\le j\le d\right\} \subset\mathbb{Q}$
such that $\varphi_{i}$ is of the form (\ref{eq:form of varph_i})
for each $i\in\Lambda$. Let $p=(p_{i})_{i\in\Lambda}$ be a probability
vector and let $\mu$ be the self-affine measure associated to $\Phi$
and $p$. Then, assuming $\Phi_{j}:=\left\{ t\rightarrow\lambda_{j}t+a_{i,j}\right\} _{i\in\Lambda}$
has no exact overlaps for each $1\le j\le d$, we have $\dim_{H}K_{\Phi}=\min\left\{ d,\dim_{A}\Phi\right\} $
and $\dim\mu=\min\left\{ d,\dim_{L}(\Phi,p)\right\} $.
\end{thm}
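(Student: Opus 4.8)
The plan is to reduce the general self-affine statement to a statement about growth of entropy under convolution, following the Breuillard--Varj\'o and Varj\'o strategy and its higher-dimensional extension. First I would recall that for a homogeneous diagonal IFS, the dimension theory of $\mu$ is governed by the Furstenberg-type measures on the ``flag'' of coordinate subspaces and by the entropies of the projections of $\mu$ onto the coordinate hyperplanes $\mathbb{R}^{k}$ associated to the ordering $\lambda_1 > \cdots > \lambda_d$. By the Ledrappier--Young-type formula for diagonal self-affine measures (as established in \cite{FH-dimension,BK,MR4557759} and used in \cite{Rap_SA_diag}), $\dim \mu$ can be expressed as a sum of ``dimension-drop'' contributions of the successive coordinate projections, and $\dim\mu = \min\{d,\dim_L(\Phi,p)\}$ is equivalent to the absence of a dimension drop at every scale for each of these projections. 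The self-similar factor IFSs $\Phi_j$ on $\mathbb{R}$ enter precisely because the $j$-th coordinate projection of $\mu$ is the self-similar measure of $\Phi_j$ with weights $p$.

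Next I would set up the entropy formalism: let $\mu_n$ denote the $n$-th level approximation of $\mu$ obtained by truncating the random series, and analyze $H(\mu_n, \mathcal{D}_m)$ for dyadic (or $\lambda$-adic) partitions $\mathcal{D}_m$. The key inductive quantity is the normalized entropy at exponentially small scales, and the goal is to show it has the maximal possible value consistent with the Lyapunov exponents. The heart of the argument is an \emph{entropy increase} statement: if at some scale the entropy of (a projection of) $\mu_n$ is bounded away from its maximum, then convolving $\mu_n$ with itself — which is what happens under the self-affine structure, since $\mu$ is a self-convolution up to an affine change — strictly increases the normalized entropy by an explicit amount depending on the gap. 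This is exactly the Varj\'o-type result the abstract advertises as the main novelty, now in the non-conformal diagonal setting: one must track entropy separately along each coordinate block where the expansion rate $-\log\lambda_j$ is constant, and handle the interaction between blocks of different expansion rates. I would invoke this entropy-increase theorem (proved earlier in the paper) as a black box here.

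With the entropy-increase machinery in hand, the argument concludes by a now-standard bootstrapping: suppose $\dim\mu < \min\{d,\dim_L(\Phi,p)\}$. Then there is a positive-density set of scales at which some coordinate projection has an entropy deficit bounded below by a fixed $\varepsilon > 0$. Feeding this into the entropy-increase result at each such scale and summing, one finds that the entropy of $\mu_n$ at the finest scale exceeds the trivial upper bound $n \cdot \log|\Lambda|$ (or the bound coming from the cardinality of level-$n$ cylinders), a contradiction — \emph{unless} the exact-overlaps phenomenon is responsible for the deficit. The \emph{no exact overlaps} hypothesis on each $\Phi_j$, together with the rationality of the $a_{i,j}$, is what rules this out: rationality lets one pass to a suitable finite-index / finite-state model (clearing denominators, the maps $\psi_{i}$ act on a fixed lattice up to scaling), and there the absence of exact overlaps upgrades to a quantitative separation estimate at infinitely many scales (this is the mechanism by which Varj\'o replaced exponential separation by ``no exact overlaps'' for algebraic-free parameters, via the Garsia-entropy / transcendence argument). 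The hardest step — and the one requiring the ``significant new ideas'' alluded to in the abstract — is the non-conformal entropy-increase estimate with an explicit rate, because in $\mathbb{R}^d$ with distinct contraction ratios one cannot work at a single scale: the projections onto different coordinate flags live at different scales simultaneously, and controlling how entropy deficits propagate across this multi-scale structure (and interact with the Furstenberg measures on the flag) is the genuine obstacle. Once that is granted, deriving $\dim_H K_\Phi = \min\{d,\dim_A\Phi\}$ is comparatively soft: it follows from the measure statement applied to the natural (e.g. equidistributed) weight vector $p$, since $\dim_L(\Phi,p)$ can be taken arbitrarily close to $\dim_A\Phi$ and $\dim\mu \le \dim_H K_\Phi \le \dim_A \Phi$.
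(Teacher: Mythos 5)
Your high-level framing (Breuillard--Varj\'o strategy, entropy increase under convolution, reduction to coordinate projections) is broadly the right one, and you correctly identify the entropy-increase theorem as the main technical contribution. But the mechanism you describe for closing the argument is not the one that works, and the gap is genuine.

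You claim that feeding the entropy deficit into the entropy-increase estimate ``at each such scale and summing'' produces entropy exceeding the trivial bound $n\log|\Lambda|$, ``unless the exact-overlaps phenomenon is responsible for the deficit,'' and that rationality plus no exact overlaps then delivers a ``quantitative separation estimate at infinitely many scales.'' This is not what happens, and in fact a direct bootstrapping of this kind cannot work: there is no quantitative separation available for transcendental $\lambda_j$, which is precisely the hard case. What the entropy-increase machinery actually produces, via Theorem \ref{thm:alg approx}, is an \emph{algebraic approximation statement}: if $\dim\mu_\lambda$ drops and all proper coordinate projections are full, then at infinitely many scales $n$ the parameter $\lambda$ is $\exp(-n^{1/\epsilon})$-close to an algebraic $\eta$ whose coordinates are roots of integer polynomials of degree $<n$ and height $\le L_0$, and the algebraic system $\Phi^\eta$ itself has random-walk entropy $h_{RW}(\Phi^\eta,p)<\kappa+\epsilon<H(p)$. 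The contradiction is then extracted by a purely number-theoretic chain that you do not mention at all: the Breuillard--Varj\'o bound relating Mahler measure to random-walk entropy forces $M(\eta_{j_0})$ to be bounded; Proposition \ref{prop:exists polynomials} (a consequence of the dimension drop via \cite{Rap_SA_diag}) supplies a small-height integer polynomial with $|P(\lambda_{j_0})|$ super-exponentially small; Dimitrov's lemma (\ref{lem:vesselin lemma}) then forces $\eta_{j_0}$ to be a root of $P$ of unboundedly high order; and the root-counting Lemma \ref{lem:There-is-function r} rules this out. None of this appears in your proposal, and it cannot be replaced by a ``Garsia-entropy / transcendence argument'' in the form you suggest.

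Two smaller inaccuracies worth flagging. First, the role of the no-exact-overlaps hypothesis is not to furnish separation: it is used to reduce, via \cite[Theorem 1.7]{Rap_SA_diag}, to the case where some $\lambda_{j_0}$ is transcendental (if all $\lambda_j$ are algebraic, no exact overlaps with integer translations already implies exponential separation and that theorem applies directly). Rationality is used only to rescale to integer translations so that the polynomial/height machinery applies. Second, the reduction to the case $\dim\pi_J\mu_\lambda=|J|$ for all proper $J\subsetneq[d]$ is an actual \emph{induction on $d$}: if some proper projection drops dimension, the inductive hypothesis together with the Ledrappier--Young formula already yields the conclusion. Your mention of ``Furstenberg measures on flags'' is also not what occurs in the diagonal setting -- the flag structure here is simply the chain of coordinate projections, and no Furstenberg measure is involved.

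Your last sentence, on deducing the attractor dimension, is essentially correct in spirit but can be stated more cleanly: for a homogeneous system, taking $p$ uniform gives $\dim_L(\Phi,p)=\dim_A\Phi$ exactly, so the set statement is an immediate consequence of the measure statement with that choice of $p$; no approximation argument is needed.
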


\begin{rem*}
When $d=1$, Theorem \ref{thm:main gen thm} has been established
in \cite[Appendix A]{rapaport20203maps} by directly extending \cite{Var-Bernoulli}.
In \cite{FengFeng}, it has been recently shown that for $d=1$ the
theorem also holds with algebraic translations. It is expected that
this extension should be possible for general $d\ge1$, but we do
not pursue it here.
\end{rem*}
\begin{rem*}
Note that Theorem \ref{thm:main bernoulli thm} follows directly from
Theorem \ref{thm:main gen thm} by considering the IFS
\[
\left\{ (x_{1},...,x_{d})\rightarrow\left(\lambda_{1}x_{1},...,\lambda_{d}x_{d}\right)\pm(1,...,1)\right\} 
\]
and probability vector $(1/2,1/2)$.
\end{rem*}
As demonstrated by various carpet-like examples (see e.g. \cite{MR2298824,Bed,MR2947936,MR1183358,Mcm}),
it is necessary to assume that the systems $\Phi_{j}$ have no exact
overlaps. On the other hand, it is reasonable to expect Theorem \ref{thm:main gen thm}
to remain true without assuming the translations $a_{i,j}$ are rational
(or algebraic). Unfortunately, at this point, this is well beyond
our reach. Indeed, this has not been achieved even when $d=1$, in
which case the validity of such a statement is considered one of the
major open problems in fractal geometry (see \cite{MR3966837,Var_ICM}).

\subsection{\label{subsec:About-the-proof}About the proof}

In this subsection, we present the general outline of the proof of
Theorem \ref{thm:main gen thm}. For the rest of the paper, fix a
finite nonempty index set $\Lambda$ and an integer $d\ge1$. The
proof is carried out by induction on $d$. Thus, assume that the theorem
holds whenever the dimension of the ambient space is strictly less
than $d$.

By rescaling the IFS if necessary, in order to prove Theorem \ref{thm:main gen thm},
it suffices to consider the case in which the translations are integers.
Thus, from now on, fix
\[
\left\{ a_{i,j}\::\:i\in\Lambda\text{ and }1\le j\le d\right\} \subset\mathbb{Z}.
\]
We also fix a probability vector $p=(p_{i})_{i\in\Lambda}$.

Write
\[
\Omega:=\left\{ (\eta_{1},...,\eta_{d})\in(0,1)^{d}\::\:\eta_{1}>...>\eta_{d}\right\} ,
\]
and for each $(\eta_{1},...,\eta_{d})=\eta\in\Omega$ let $\Phi^{\eta}$
be the homogeneous diagonal affine IFS on $\mathbb{R}^{d}$ with translations
$a_{i}:=(a_{i,1},...,a_{i,d})$ and linear part $\mathrm{diag}(\eta_{1},...,\eta_{d})$.
That is,
\[
\Phi^{\eta}:=\left\{ \varphi_{i}^{\eta}(x)=\left(\eta_{1}x_{1}+a_{i,1},...,\eta_{d}x_{d}+a_{i,d}\right)\right\} _{i\in\Lambda}.
\]
Fix $(\lambda_{1},...,\lambda_{d})=\lambda\in\Omega$ such that $\Phi_{j}^{\lambda}:=\left\{ t\rightarrow\lambda_{j}t+a_{i,j}\right\} _{i\in\Lambda}$
has no exact overlaps for each $1\le j\le d$, and let $\mu_{\lambda}$
be the self-affine measure associated to $\Phi^{\lambda}$ and $p$.

By considering $p$ with equal weights, (\ref{eq:dim K =00003D dim_A})
follows from (\ref{eq:dim=00005Cmu=00003D dim_L}). Thus, in order
to prove the theorem, we aim to show that $\dim\mu_{\lambda}=\min\left\{ d,\dim_{L}(\Phi^{\lambda},p)\right\} $.
Assume by contradiction that this is not the case. Hence, by \cite[Theorem 1.7]{Rap_SA_diag},
it must hold that $\lambda_{j_{0}}$ is transcendental for some $1\le j_{0}\le d$.

Write $[d]=\{1,...,d\}$, and for each $\emptyset\ne J\subset[d]$
denote by $\pi_{J}$ the orthogonal projection onto $\mathrm{span}\{e_{j}\}_{j\in J}$,
where $\{e_{j}\}_{j\in[d]}$ is the standard basis of $\mathbb{R}^{d}$.
Note that $\pi_{J}\mu_{\lambda}:=\mu_{\lambda}\circ\pi_{J}^{-1}$
is (an embedded copy of) a self-affine measure associated to $p$
and an affine IFS on $\mathbb{R}^{|J|}$ for which the conditions
of Theorem \ref{thm:main gen thm} are also satisfied. From this observation,
by the induction assumption, and by applying the Ledrappier--Young
formula for diagonal self-affine measures (obtained in \cite{FH-dimension}),
it can be shown that $\dim\mu_{\lambda}=\dim_{L}(\Phi^{\lambda},p)$
whenever $\dim\pi_{J}\mu_{\lambda}<|J|$ for some proper subset $J$
of $[d]$. Thus, it must hold that $\dim\pi_{J}\mu_{\lambda}=|J|$
for each $\emptyset\ne J\subsetneq[d]$.

From this point, we try to follow the argument of Varjú \cite{Var-Bernoulli},
which roughly speaking relies on three main components. The first
component is the work of Hochman \cite{Ho1} regarding exponentially
separated systems on the real line. Extending this work to higher
dimensions requires significant new ideas, but this was already achieved
in \cite{Rap_SA_diag}. By applying results from \cite{Rap_SA_diag},
we are able to show that, under our assumptions of dimension drop
and full dimensionality of projections, the parameters $\lambda_{1},...,\lambda_{d}$
can be approximated by algebraic numbers, of controllable degree and
height, with high precision at all scales (see Proposition \ref{prop:exists polynomials}).

The second component is the connection, obtained by Breuillard and
Varjú \cite{BV-entropy}, between random walk entropy and Mahler measure.
For an affine IFS $\Psi=\{\psi_{i}\}_{i\in\Lambda}$, we write $h_{RW}(\Psi,p)$
for the entropy of the random walk generated by $\Psi$ and $p$ (see
Section \ref{subsec:Random-walk-entropy}). The definition of the
Mahler measure of an algebraic number is provided in Section \ref{subsec:Mahler-measure-and}.
For $(\eta_{1},...,\eta_{d})=\eta\in\Omega$ such that $\eta_{j_{0}}$
is algebraic, it follows from the results of \cite{BV-entropy} that
$h_{RW}(\Phi_{j_{0}}^{\eta},p)$ is close to its maximal possible
value $H(p)$ whenever the Mahler measure of $\eta_{j_{0}}$ is sufficiently
large (see Theorem \ref{thm:lb on RW-ent for large Mahler}). Here,
$\Phi_{j_{0}}^{\eta}:=\left\{ t\rightarrow\eta_{j_{0}}t+a_{i,j_{0}}\right\} _{i\in\Lambda}$.
Since $h_{RW}(\Phi^{\eta},p)$ is always at least as large as $h_{RW}(\Phi_{j_{0}}^{\eta},p)$,
we are able to use this result without any adaptation needed.

The third component is the work \cite{BV-transcendent} of Breuillard
and Varjú regarding Bernoulli convolutions on $\mathbb{R}$, in which
they establish that parameters with dimension drop can be approximated
extremely well, at infinitely many scales, by controllable algebraic
parameters that also have dimension drop. Most of the present paper
is dedicated to extending this result to higher dimensions, which
is the content of the following statement. Set
\[
L_{0}:=\max\left\{ a_{i_{1},j}-a_{i_{2},j}\::\:i_{1},i_{2}\in\Lambda\text{ and }1\le j\le d\right\} ,
\]
and for $n\ge1$ let $\mathcal{P}_{L_{0}}^{(n)}\subset\mathbb{Z}[X]$
be the set of polynomials of degree strictly less than $n$ with integer
coefficients bounded in absolute value by $L_{0}$. For $1\le j\le d$
write $\chi_{j}:=-\log\lambda_{j}$, and set
\[
\kappa:=\chi_{d}\dim\mu_{\lambda}-\sum_{j=1}^{d-1}(\chi_{d}-\chi_{j}).
\]

\begin{thm}
\label{thm:alg approx}Suppose that $\dim\mu_{\lambda}<\min\left\{ d,\dim_{L}(\Phi^{\lambda},p)\right\} $,
$\dim\pi_{J}\mu_{\lambda}=|J|$ for each proper subset $J$ of $[d]$,
and $\lambda_{j_{0}}$ is transcendental for some $1\le j_{0}\le d$.
Then for every $\epsilon>0$ and $N\ge1$ there exist $n\ge N$ and
$(\eta_{1},\ldots,\eta_{d})=\eta\in\Omega$ such that,
\begin{enumerate}
\item for each $1\le j\le d$ there exists $0\neq P_{j}\in\mathcal{P}_{L_{0}}^{(n)}$
with $P_{j}(\eta_{j})=0$;
\item \label{enu:ineq for H_RW}$h_{RW}(\Phi^{\eta},p)<\kappa+\epsilon$;
\item $|\lambda-\eta|\le\exp\big(-n^{1/\epsilon}\big)$.
\end{enumerate}
\end{thm}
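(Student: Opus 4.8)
The plan is to follow the three-component strategy sketched in Section~\ref{subsec:About-the-proof}, combining it with the quantitative entropy-increase machinery described in the abstract. First I would invoke the results imported from \cite{Rap_SA_diag} to produce, under the standing hypotheses of the theorem (dimension drop, full-dimensional projections, transcendence of $\lambda_{j_0}$), a sequence of scales at which each $\lambda_j$ is extremely well approximated by an algebraic number of controlled degree and height; this is essentially Proposition~\ref{prop:exists polynomials} referred to in the introduction, and it yields candidates $\eta = (\eta_1,\ldots,\eta_d)$ satisfying item (1) with $P_j \in \mathcal{P}_{L_0}^{(n)}$ and item (3) with $|\lambda - \eta| \le \exp(-n^{1/\epsilon})$ for infinitely many $n$. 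The real content is then to show that among these approximants one can be chosen so that item (2), the entropy bound $h_{RW}(\Phi^\eta,p) < \kappa + \epsilon$, also holds.

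For item (2), I would argue by contradiction: suppose that for all sufficiently fine approximants the random walk entropy $h_{RW}(\Phi^\eta,p)$ exceeds $\kappa + \epsilon$. The strategy of Varj\'o, which I would adapt, is that such an entropy surplus for the algebraic approximant forces, via a transference argument, a corresponding entropy (and hence dimension) surplus for $\mu_\lambda$ itself, contradicting the dimension drop $\dim\mu_\lambda < \min\{d, \dim_L(\Phi^\lambda,p)\}$. Concretely: because $\eta$ approximates $\lambda$ to precision $\exp(-n^{1/\epsilon})$ at scale $n$, the measures $\mu_\lambda$ and $\mu_\eta$ (the self-affine measure for $\Phi^\eta$) are close in a suitable Wasserstein-type sense down to exponentially small scales, so the $n$-th level entropy $\tfrac{1}{n} H(\mu_\eta, \mathcal{D}_n)$ transfers to a lower bound on the corresponding entropy of $\mu_\lambda$. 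The quantitative entropy-increase theorem (the main novelty flagged in the abstract) then converts a lower bound on random-walk entropy $h_{RW}(\Phi^\eta,p)$ near-maximal relative to $\kappa$ into an explicit lower bound on $\dim\mu_\eta$, because $\kappa = \chi_d \dim\mu_\lambda - \sum_{j=1}^{d-1}(\chi_d - \chi_j)$ is precisely the threshold below which the Ledrappier--Young/Falconer-type dimension formula is "saturated" given the projection data. Pushing this back to $\mu_\lambda$ yields $\dim\mu_\lambda > \dim\mu_\lambda$, the desired contradiction; so some approximant must satisfy (2).

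More carefully, the chain of reductions I expect is: (i) use Breuillard--Varj\'o \cite{BV-entropy} (Theorem~\ref{thm:lb on RW-ent for large Mahler}) to rule out approximants whose $j_0$-th coordinate has large Mahler measure, since those would automatically have near-maximal $h_{RW}(\Phi_{j_0}^\eta,p) = H(p)$ and the argument of \cite{BV-transcendent} shows this propagates to a dimension lower bound incompatible with the drop --- hence the approximants we keep have bounded Mahler measure, which is exactly what keeps them in $\mathcal{P}_{L_0}^{(n)}$; (ii) among the bounded-Mahler-measure approximants, run the non-conformal entropy-increase argument coordinate by coordinate, using the full-dimensionality of all proper projections $\pi_J\mu_\lambda$ to control the conditional entropies along each diagonal direction; (iii) assemble the one-dimensional estimates via the diagonal Ledrappier--Young formula of Feng--Hu \cite{FH-dimension} to obtain the bound on $h_{RW}(\Phi^\eta,p)$ in terms of $\kappa$. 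Finally, take $n \ge N$ from the infinitely-many available scales to get the quantifier ordering in the statement.

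The main obstacle, as the abstract itself emphasises, is step (ii): the quantitative entropy-increase result in the higher-dimensional non-conformal setting. In dimension one, Varj\'o's entropy-increase argument works with a single expansion rate; here the coordinates expand at distinct rates $\chi_1 > \cdots > \chi_d$, so a naive application would lose the explicit, direction-sensitive dependence encoded in $\kappa$. Handling this requires decomposing entropy along the flag of coordinate subspaces, carefully tracking how the approximation error $|\lambda - \eta|$ interacts with the different scales $\lambda_j^n$, and ensuring that the entropy-increase gain in each direction is large enough --- with an \emph{explicit} quantitative rate --- to close the contradiction. The delicate point is that the entropy gain must be controlled uniformly across directions despite the non-conformality, and it is precisely here that the "significant new ideas" mentioned in the abstract enter; I would expect this to be the part of the proof that occupies the bulk of the paper and that cannot be reduced to a citation.
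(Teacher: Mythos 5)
Your proposal has the right broad ingredients in mind (algebraic approximation via \cite{Rap_SA_diag}, the entropy-increase machinery, and the Breuillard--Varj\'o connection between Mahler measure and random-walk entropy), but the actual argument for Theorem~\ref{thm:alg approx} is structured quite differently, and several of the pieces you invoke belong to other parts of the paper.

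The central misidentification is the shape of the contradiction. You propose to fix a fine approximant $\eta$ produced by Proposition~\ref{prop:exists polynomials} and then argue by contradiction that item~(2) must hold for it. In fact the paper sets up $E^{(n)}$ to be the set of $\eta\in\Omega$ satisfying items~(1) and~(2) (with $2\epsilon$ in place of $\epsilon$), and assumes for contradiction that item~(3) fails for \emph{every} $\eta\in E^{(n)}$ and every $n\ge\epsilon^{-1}$; that is, that no approximant with low random-walk entropy can be super-exponentially close to $\lambda$. The work is then to show that this hypothesis is self-defeating. This is done by inductively constructing an increasing sequence of scales $n_0<n_1<\ldots$ in which, at each step, one either observes a large entropy surplus $H\bigl(\mu_{\lambda}^{(q)};q^{-Cq}\bigr)\ge q(\kappa+2\epsilon)$ and takes $n_{j+1}=q$, or one finds a good approximant $\eta\in E^{(q)}$ and then uses the contradiction hypothesis to bound how close $\eta$ can be to $\lambda$, which in turn fixes the new scale $n_{j+1}$. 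In the second case, the key fact that the entropy condition~(\ref{eq:A.5.3}) still holds at the new scale is supplied by Proposition~\ref{pro:A.7} (if $\lambda$ is super-exponentially close to a controllable algebraic vector, then the normalized discrete entropy is full). The sequence constructed this way is fed into Proposition~\ref{pro:A.3}, which is where Theorem~\ref{thm:effective ent inc result} enters (via Corollary~\ref{cor:ent inc for conv factors}); the resulting inequality on $\sum_j 1/(\log K_j\log\log K_j)$ is shown to be violated because the $n_j$ grow at most like $n_0^{(2/\epsilon)^j}$, giving the contradiction. None of this architecture --- the set $E^{(n)}$, the inductive dichotomy, the two Diophantine Propositions~\ref{pro:A.5} and~\ref{pro:A.7}, Proposition~\ref{pro:A.3} --- appears in your sketch.

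Two further dislocations. First, Theorem~\ref{thm:lb on RW-ent for large Mahler} (the Mahler-measure lower bound for $h_{RW}$) is not used at all in the proof of Theorem~\ref{thm:alg approx}; it is applied later, in the proof of Theorem~\ref{thm:main gen thm} in Section~\ref{subsec:Proof-of-main-result}, to rule out approximants with large Mahler measure once Theorem~\ref{thm:alg approx} has already produced them. Second, the discussion you give of decomposing entropy along the flag of coordinate subspaces and controlling direction-sensitive gains belongs to the proof of the entropy-increase result, Theorem~\ref{thm:effective ent inc result} (Section~\ref{sec:An-entropy-increase result}); by the time one is proving Theorem~\ref{thm:alg approx}, that theorem is invoked as a black box through Proposition~\ref{pro:A.3}, and no further non-conformal analysis is needed. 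Your ``Wasserstein-type'' transference idea also does not match the actual mechanism, which is purely entropic and proceeds through the discrete approximations $\mu_\lambda^{(n)}$ and Proposition~\ref{pro:A.5} rather than through a metric comparison of $\mu_\lambda$ and $\mu_\eta$.
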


\begin{rem*}
The assumption regarding $\lambda_{j_{0}}$ being transcendental for
some $1\le j_{0}\le d$ is not really necessary. It simplifies the
proof slightly by enabling us to ignore the case of algebraic parameters,
which is treated in \cite{Rap_SA_diag}.
\end{rem*}
\begin{rem*}
In \cite{BV-transcendent}, the inequality $h_{RW}(\Phi^{\eta},p)<\kappa+\epsilon$
is replaced with $\dim\mu_{\eta}<\dim\mu_{\lambda}+\epsilon$, where
$\mu_{\eta}$ is the Bernoulli convolution associated to $\eta$.
When $d=1$ and $\dim\mu_{\lambda}+\epsilon/\chi_{1}<1$, it follows
by the work of Hochman that these inequalities are equivalent (see
\cite[Section 3.4]{BV-entropy}). The intuition behind the value $\kappa$
is best explained by \cite[Lemma 4.1]{Rap_SA_diag}, in which it is
shown that $\kappa$ equals the limit of the normalized entropies
$\frac{1}{n}H\left(\mu_{\lambda},\mathcal{E}_{n}\right)$, where $\mathcal{E}_{n}$
is the level-$n$ non-conformal partition of $\mathbb{R}^{d}$ determined
by $\lambda$ (see Section \ref{subsec:Non-conformal-partitions}).
\end{rem*}
Once we have the aforementioned three components at our disposal,
we can complete the proof of Theorem \ref{thm:main gen thm} by following
the proof of \cite[Theorem A.1]{rapaport20203maps}, which handles
the case $d=1$ and is a modification of the argument found in \cite{Var-Bernoulli}.

Let us next discuss the proof of Theorem \ref{thm:alg approx}. The
key ingredient of the proof is an entropy increase statement, in which
the amount of increase in entropy is explicit. For the case $d=1$,
such a statement was obtained by Varjú \cite[Theorem 3]{varju2019absolute},
and it plays a key role in the proof of the main result of \cite{BV-transcendent}.

In order to state our entropy increase result, we need some preparations.
Given a discrete random vector $Y$, its Shannon entropy is denoted
by $H(Y)$. For a bounded random vector $X=(X_{1},...,X_{d})$ in
$\mathbb{R}^{d}$ and $(r_{1},...,r_{d})=r\in\mathbb{R}_{>0}^{d}$,
we set
\begin{equation}
H\left(X;r\right):=\int_{[0,1)^{d}}H\left(\left\lfloor X_{1}/r_{1}+x_{1}\right\rfloor ,...,\left\lfloor X_{d}/r_{d}+x_{d}\right\rfloor \right)\:dx_{1}...dx_{d}.\label{eq:def of avg ent in intro}
\end{equation}
We refer to $H\left(X;r\right)$ as the average entropy of $X$ at
scale $r$. Given $r'\in\mathbb{R}_{>0}^{d}$, we also set
\[
H\left(X;r\mid r'\right):=H\left(X;r\right)-H\left(X;r'\right).
\]
If $\mu$ is the distribution of $X$, we write $H\left(\mu;r\right)$
and $H\left(\mu;r\mid r'\right)$ in place of $H\left(X;r\right)$
and $H\left(X;r\mid r'\right)$. These quantities originate in the
work of Wang \cite{MR2824843}, and in the case $d=1$ they play an
important role in the papers \cite{varju2019absolute,BV-transcendent}.
The disappearance of certain error terms, which are present without
the averaging in (\ref{eq:def of avg ent in intro}), is the main
advantage of using the notion of average entropy.

In order to state the entropy increase result, we also need the following
definition. As noted above, for $n\ge0$ we denote by $\mathcal{E}_{n}$
the level-$n$ non-conformal partition determined by $\lambda$. That
is, $\mathcal{E}_{n}$ is a partition of $\mathbb{R}^{d}$ into rectangles
with side lengths roughly $\lambda_{1}^{n},...,\lambda_{d}^{n}$.
We write $\mathcal{M}(\mathbb{R}^{d})$ for the collection of compactly
supported Borel probability measures on $\mathbb{R}^{d}$. Given $\mu\in\mathcal{M}(\mathbb{R}^{d})$
and Borel partitions $\mathcal{C},\mathcal{D}$ of $\mathbb{R}^{d}$,
the conditional entropy of $\mathcal{C}$ given $\mathcal{D}$ with
respect to $\mu$ is denoted by $H\left(\mu,\mathcal{C}\mid\mathcal{D}\right)$
(see Section \ref{subsec:Entropy-of-a partition}).
\begin{defn}
\label{def:non-saturated-measure}Given $\epsilon>0$ and $m\ge1$,
we say that $\mu\in\mathcal{M}(\mathbb{R}^{d})$ is $(\epsilon,m)$-non-saturated
across the principal directions at all scales, or simply $(\epsilon,m)$-non-saturated,
if for all $1\le j\le d$ and $n\ge0$,
\[
\frac{1}{m}H\left(\mu,\mathcal{E}_{n+m}\mid\mathcal{E}_{n}\vee\pi_{[d]\setminus\{j\}}^{-1}\mathcal{E}_{n+m}\right)<\chi_{j}-\epsilon.
\]
\end{defn}

\begin{rem*}
Given $\mu\in\mathcal{M}(\mathbb{R}^{d})$, $n\ge0$ and $E\in\mathcal{E}_{n}$,
we refer to $\mu_{E}:=\frac{1}{\mu(E)}\mu|_{E}$ as a (non-conformal)
$n$-component of $\mu$. It follows from basic properties of entropy
that for $m\ge1$ and $1\le j\le d$,
\[
\frac{1}{m}H\left(\mu_{E},\mathcal{E}_{n+m}\mid\pi_{[d]\setminus\{j\}}^{-1}\mathcal{E}_{n+m}\right)\le\chi_{j}+O\left(\frac{1}{m}\right).
\]
Thus, roughly speaking, the assumption of $\mu$ being $(\epsilon,m)$-non-saturated
means that for all $1\le j\le d$ and all scales $n\ge0$, for a non-negligible
proportion of $n$-components $\mu_{E}$, the entropy along narrow
rectangular tubes in direction $e_{j}\mathbb{R}$ is not full.
\end{rem*}
We can now state our entropy increase result. For $t\in\mathbb{R}$,
we write $\lambda^{t}:=\left(\lambda_{1}^{t},...,\lambda_{d}^{t}\right)$.
\begin{thm}
\label{thm:effective ent inc result}For each $\epsilon>0$ and $M\ge1$,
there exists $C=C(\lambda,\epsilon,M)>1$ such that the following
holds. Let $\mu\in\mathcal{M}(\mathbb{R}^{d})$ be $(\epsilon,m)$-non-saturated
for all $m\ge M$, and let $\nu\in\mathcal{M}(\mathbb{R}^{d})$, $0<\beta<1/2$,
and $t_{2}>t_{1}>0$ be with $\frac{1}{t_{2}-t_{1}}H\left(\nu;\lambda^{t_{2}}\mid\lambda^{t_{1}}\right)>\beta$.
Then,
\[
H\left(\nu*\mu;\lambda^{t_{2}}\mid\lambda^{t_{1}}\right)\ge H\left(\mu;\lambda^{t_{2}}\mid\lambda^{t_{1}}\right)+C^{-1}\beta\left(\log\beta^{-1}\right)^{-1}(t_{2}-t_{1})-C.
\]
\end{thm}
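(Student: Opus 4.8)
The plan is to follow the strategy of Varjú's effective entropy increase theorem \cite[Theorem 3]{varju2019absolute}, adapted to the non-conformal diagonal setting. The starting point is the Fourier-analytic decomposition of entropy: the quantity $H(\nu*\mu;\lambda^{t_2}\mid\lambda^{t_1})$ should be compared against $H(\mu;\lambda^{t_2}\mid\lambda^{t_1})$ by examining the scales $t\in[t_1,t_2]$ one dyadic block at a time. At each intermediate scale, one wants to exploit the hypothesis that the convolving measure $\nu$ carries a definite amount of entropy, $\frac{1}{t_2-t_1}H(\nu;\lambda^{t_2}\mid\lambda^{t_1})>\beta$, to force an increase in the entropy of the convolution. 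The mechanism is the classical fact that convolution cannot decrease entropy, combined with a quantitative concavity/submodularity estimate: when $\mu$ has non-full entropy in some principal direction at some scale (which is exactly what $(\epsilon,m)$-non-saturation guarantees, for every $m\ge M$ and every scale $n$), convolving with a measure that spreads mass in that direction produces a strictly larger entropy, with a gain that can be quantified.

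The key steps, in order, would be: (i) discretize the interval $[t_1,t_2]$ into blocks of some length $m$ comparable to $M$ (or a large multiple thereof), and telescope $H(\nu*\mu;\lambda^{t_2}\mid\lambda^{t_1})-H(\mu;\lambda^{t_2}\mid\lambda^{t_1})$ into a sum over blocks; (ii) pass to non-conformal $n$-components $\mu_E$ and $\nu_F$, so that the global entropy estimate reduces to a component-wise statement, using the standard component-decomposition lemmas for average entropy (the averaging in \eqref{eq:def of avg ent in intro} is essential here, as it kills the error terms that would otherwise appear); (iii) for each block and a positive-proportion set of components, invoke a local entropy-increase inequality: if $\mu_E$ is non-saturated in direction $e_j$ at this scale and $\nu_F$ has a definite amount of entropy, then $H(\nu_F*\mu_E;\cdot\mid\cdot)$ exceeds $H(\mu_E;\cdot\mid\cdot)$ by a quantity on the order of (entropy of $\nu_F$ in direction $j$) times a gap factor. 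The precise shape $C^{-1}\beta(\log\beta^{-1})^{-1}(t_2-t_1)$ of the total gain comes from tracking how the entropy of $\nu$ distributes across directions and scales: at most $O(1)$ of the $d$ directions can be ``used up'' in each block, the $\beta$ measures the density of available $\nu$-entropy, and the $(\log\beta^{-1})^{-1}$ loss is the familiar inefficiency factor in converting an $L^2$-flattening / entropy-gain statement into a linear lower bound (it appears already in the $d=1$ case and reflects passing from a multiscale Fourier estimate to a single quantitative increment). Summing over the $\sim(t_2-t_1)/m$ blocks and absorbing the finitely many boundary blocks into the additive constant $-C$ gives the claimed bound.

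The main obstacle is step (iii) in the non-conformal setting: in dimension one, the local increase comes from a clean statement about convolutions of measures on $\mathbb{R}$ with non-full entropy, but here the partitions $\mathcal{E}_n$ are into highly eccentric rectangles, and the directions $e_1,\dots,e_d$ are stretched at different rates $\chi_1<\dots<\chi_d$. One must argue that the $\nu$-entropy present at a given block, when conditioned on the coordinate hyperplanes $\pi_{[d]\setminus\{j\}}^{-1}\mathcal{E}_{n+m}$, concentrates in at least one direction $j$ in which $\mu$ is simultaneously non-saturated — and then run the one-dimensional-type entropy-increase argument \emph{inside that tube direction}, holding the other coordinates fixed. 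This requires a careful bookkeeping of conditional average entropies across the product structure, plus a pigeonhole over the $d$ directions, and a uniform (in the scale $n$) version of the local increase lemma; the constant $C=C(\lambda,\epsilon,M)$ will depend on $\lambda$ precisely because the eccentricities $\chi_j$ and their gaps $\chi_j-\chi_i$ enter the conversion between ``entropy in direction $j$'' and ``entropy at scale $\lambda^t$.'' Handling the interaction of the non-saturation hypothesis (which is a lower bound on a \emph{gap}, $\chi_j-\epsilon$) with the $\nu$-entropy lower bound $\beta$, uniformly over all blocks and all positive-proportion component sets, is where the genuinely new ideas of the paper must be deployed.
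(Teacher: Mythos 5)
Your proposal correctly identifies the telescoping-over-scales skeleton, the role of non-saturation, and the use of convolution-monotonicity / submodularity, but it mischaracterizes the mechanism and omits the two steps that actually drive the argument.

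First, the proof is not Fourier-analytic. Neither the paper nor Varj\'u's one-dimensional prototype (\cite[Theorem 3]{varju2019absolute}) decomposes entropy via Fourier analysis; the $L^2$-flattening picture you allude to is Bourgain's motivation, not the method here, and your explanation of the $(\log\beta^{-1})^{-1}$ factor as a cost of ``converting an $L^2$-flattening statement into a linear lower bound'' is not what happens. In the paper that factor appears in Proposition~\ref{prop:measure decomp}: the mass of $\nu$ captured at scale $s_n$ by two-point (Bernoulli) measures is bounded below by $\epsilon\,h/\max\{1,-\log h\}$, where $h=H(\nu;s_n\mid s_{n-N})$, and the logarithm comes from the elementary inequality $\delta\log\delta^{-1}\ge h/16$ obtained there.

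Second, and more importantly, your plan skips the measure-decomposition step entirely: at each good scale $n$ one writes $\nu=\theta+\zeta_1+\cdots+\zeta_L$ where each $\zeta_i$ is a Bernoulli measure of diameter $\asymp|s_n|$. This reduction of ``$\nu$ has definite conditional entropy'' to ``$\nu$ contains Bernoulli mass at the right scale'' is the first of the three steps and is essential; without it there is no concrete object to convolve with $\mu$. Third, your step (iii) --- the local entropy increase against a non-saturated $\mu$ --- is exactly where you concede the argument is missing, and this is precisely the place the paper does something new: the one-dimensional proof of \cite{varju2019absolute} breaks down in the eccentric-rectangle setting, so the paper instead convolves with a high power $\zeta^{*k}$ of the Bernoulli measure, uses the Berry--Esseen-based Lemma~\ref{lem:ent of slices of O(1) measures} (from \cite{Rap_SA_diag}) to show $\zeta^{*k}$ has nearly full entropy along narrow tubes in some principal direction $j$, combines this with $(\epsilon,m)$-non-saturation and concavity to get an increase for $\mu*\zeta^{*k}$, and then \emph{deamplifies} via the average-entropy Kaimanovich--Vershik inequality (Lemma~\ref{lem:KV by submodularity}) to transfer the gain back to $\mu*\zeta$. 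None of this --- the pass to $\zeta^{*k}$, the Berry--Esseen ingredient, or the error-free Kaimanovich--Vershik step that the averaging makes possible --- appears in your plan, and these are the genuinely new contributions. As written, your proposal is a restatement of the target in the form ``follow Varj\'u and fill in the hard part,'' which is a gap, not a proof.
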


\begin{rem*}
It is easy to see that by replacing $H\left(\cdot;\lambda^{t_{2}}\mid\lambda^{t_{1}}\right)$
with $H\left(\cdot,\mathcal{E}_{\left\lfloor t_{2}\right\rfloor }\mid\mathcal{E}_{\left\lfloor t_{1}\right\rfloor }\right)$
in the last theorem, one gets a formally equivalent statement. On
the other hand, the use of average entropy is crucial to the proof
of Theorem \ref{thm:effective ent inc result}, and also to the proof
of Theorem \ref{thm:alg approx} in which Theorem \ref{thm:effective ent inc result}
is applied repeatedly. This is due to the aforementioned disappearance
of error terms caused by the averaging procedure.
\end{rem*}
\begin{rem*}
Since the IFS $\Phi^{\lambda}$ is homogeneous, the measures $\mu_{\lambda}$
can be represented as an infinite convolution (see Section \ref{subsec:Convolution-factors}).
Assuming $\dim\mu_{\lambda}<d$ and $\dim\pi_{J}\mu_{\lambda}=|J|$
for each proper subset $J$ of $[d]$, we shall show in Section \ref{subsec:Non-saturation-of conv factors}
that there exist $\epsilon>0$ and $M\ge1$ such that $\mu_{\lambda}$
and its convolution factors are all $(\epsilon,m)$-non-saturated
for all $m\ge M$. This enables the repeated application of Theorem
\ref{thm:effective ent inc result} in the proof of Theorem \ref{thm:alg approx}.
\end{rem*}
Given that we have Theorem \ref{thm:effective ent inc result} at
our disposal, the proof of Theorem \ref{thm:alg approx} is similar
to the proof of \cite[Theorem A.2]{rapaport20203maps}, which is a
modification of the argument in \cite{BV-transcendent}. On the other
hand, the proof of Theorem \ref{thm:effective ent inc result}, which
is an extension of the proof of \cite[Theorem 3]{varju2019absolute}
to higher dimensions, requires new ideas and is probably the main
technical contribution of this paper.

We say that $\zeta\in\mathcal{M}(\mathbb{R}^{d})$ is a Bernoulli
measure if it is of the form $\zeta=\frac{1}{2}(\delta_{x}+\delta_{y})$
for some distinct $x,y\in\mathbb{R}^{d}$. The proof of \cite[Theorem 3]{varju2019absolute},
which is motivated by the work of Bourgain \cite{MR2763000}, consists
of three steps. In the first step, given a scale $t>0$ and $\nu\in\mathcal{M}(\mathbb{R})$,
a decomposition of $\nu$ is obtained as a convex combination of probability
measures in which a non-negligible part of the mass, in a manner depending
on $t$ and $\nu$, is captured by Bernoulli measures of diameter
roughly $t$. In the second step, entropy increase is obtained for
the convolution of a non-saturated measure with a Bernoulli measure.
More precisely, given $t>0$ and $\mu\in\mathcal{M}(\mathbb{R})$,
it is shown that
\begin{equation}
H\left(\zeta*\mu;r_{2}\mid r_{1}\right)>H\left(\mu;r_{2}\mid r_{1}\right)+\delta,\label{eq:ent increse bernoulli for d=00003D1}
\end{equation}
where $\zeta\in\mathcal{M}(\mathbb{R})$ is Bernoulli, $\mathrm{diam}(\mathrm{supp}(\zeta))$
and $0<r_{2}<r_{1}$ are comparable with $t$, and the increase $\delta>0$
depends quantitatively on the non-saturation of $\mu$ at scale $t$.
Finally, in the third step, the first two steps are applied repeatedly
while averaging over the scale, thus obtaining the desired entropy
increase.

Extending the third step to higher dimensions requires minimal changes.
The extension of the first step requires some work, but the core idea
remains the same. On the other hand, the argument given in \cite{varju2019absolute}
for the second step fails completely in higher dimensions, and a new
approach is needed.

Our approach for extending the second step involves using ideas from
\cite{Rap_SA_diag}, which rely on the Berry-Esseen theorem and extend
the proof of Hochman's inverse theorem from \cite{Ho1} to the higher
dimensional non-conformal setup. These ideas will enable us to obtain
entropy increase for convolutions with repeated self-convolutions
of a Bernoulli measure. In order to deduce from this entropy increase
for convolutions with a Bernoulli measure, we establish a version
for average entropy of a classical lemma due to Kaimanovich and Vershik
\cite{kaimanovich_and_vershik} (see Lemma \ref{lem:KV by submodularity}
below). This version, which crucially has no error term, follows from
a connection between average and differential entropies (see Lemma
\ref{lem:avg ent as dif ent}), and from an entropy submodularity
inequality due to Madiman \cite{madiman2008entropy}. The inequality
also plays an important role in the earlier works \cite{varju2019absolute,BV-entropy,BV-transcendent},
but the average entropy version of the Kaimanovich--Vershik lemma
seems to be new.

\subsubsection*{\textbf{\emph{Structure of the paper}}}

The rest of the paper is organised as follows. In Section \ref{sec:Preliminaries}
we introduce some notations, define necessary concepts, and develop
basic properties of average entropy in $\mathbb{R}^{d}$. Section
\ref{sec:An-entropy-increase result} is devoted to the proof of the
entropy increase result, Theorem \ref{thm:effective ent inc result}.
In Section \ref{sec:Algebraic-approximation} we establish the algebraic
approximation statement, Theorem \ref{thm:alg approx}. We prove our
main result, Theorem \ref{thm:main gen thm}, in Section \ref{sec:Proof-of-main result}.

\subsubsection*{\textbf{\emph{Acknowledgment}}}

We would like to thank Péter Varjú for his comments on an early version
of this paper.

\section{\label{sec:Preliminaries}Preliminaries}

\subsection{\label{subsec:Basic-notations}Basic notations}

Throughout the paper, the base of the $\log$ and $\exp$ functions
is always $2$. For an integer $k\ge0$ we write $[k]$ to represent
the set $\{1,...,k\}$, with the convention that $[0]=\emptyset$.
Given a metric space $X$, denote by $\mathcal{M}(X)$ the collection
of compactly supported Borel probability measures on $X$. For $n,l\in\mathbb{Z}_{>0}$,
let $\mathcal{P}_{l}^{(n)}\subset\mathbb{Z}[X]$ be the set of polynomials
of degree strictly less than $n$ with integer coefficients bounded
in absolute value by $l$.

Given $R_{1},R_{2}\in\mathbb{R}$ with $R_{1},R_{2}\ge1$, we write
$R_{1}\ll R_{2}$ in order to indicate that $R_{2}$ is large with
respect to $R_{1}$. Formally, this means that $R_{2}\ge f(R_{1})$,
where $f$ is an unspecified function from $[1,\infty)$ into itself.
The values attained by $f$ are assumed to be sufficiently large in
a manner depending on the specific context.

Similarly, given $0<\epsilon_{1},\epsilon_{2}<1$ we write $R_{1}\ll\epsilon_{1}^{-1}$,
$\epsilon_{2}^{-1}\ll R_{2}$, and $\epsilon_{1}^{-1}\ll\epsilon_{2}^{-1}$
to respectively indicate that $\epsilon_{1}$ is small with respect
to $R_{1}$, $R_{2}$ is large with respect to $\epsilon_{2}$, and
$\epsilon_{2}$ is small with respect to $\epsilon_{1}$.

The relation $\ll$ is clearly transitive. That is, if $R_{1}\ll R_{2}$
and for $R_{3}\ge1$ we have $R_{2}\ll R_{3}$, then also $R_{1}\ll R_{3}$.
For instance, the sentence `Let $m\ge1$, $k\ge K(m)\ge1$ and $n\ge N(m,k)\ge1$
be given' is equivalent to `Let $m,k,n\ge1$ be with $m\ll k\ll n$'.

\subsection{\label{subsec:setup-and add notat}Setup and related notations}

Recall from Section \ref{subsec:About-the-proof} that throughout
the paper we fix $d\in\mathbb{Z}_{>0}$, a finite nonempty index set
$\Lambda$, a probability vector $p=(p_{i})_{i\in\Lambda}$, and integers
\[
\left\{ a_{i,j}\::\:i\in\Lambda\text{ and }1\le j\le d\right\} .
\]
Set
\[
a_{i}:=(a_{i,1},...,a_{i,d})\text{ for }i\in\Lambda,
\]
and recall that
\begin{equation}
L_{0}:=\max\left\{ a_{i_{1},j}-a_{i_{2},j}\::\:i_{1},i_{2}\in\Lambda\text{ and }1\le j\le d\right\} .\label{eq:def of L_0}
\end{equation}

Write
\[
\Omega:=\left\{ (\eta_{1},...,\eta_{d})\in(0,1)^{d}\::\:\eta_{1}>...>\eta_{d}\right\} ,
\]
and for $(\eta_{1},...,\eta_{d})=\eta\in\Omega$, $i\in\Lambda$ and
$1\le j\le d$, let $\varphi_{i,j}^{\eta}:\mathbb{R}\rightarrow\mathbb{R}$
be defined by $\varphi_{i,j}^{\eta}(t)=\eta_{j}t+a_{i,j}$. Given
$\emptyset\ne J\subset[d]$ denote by $\Phi_{J}^{\eta}$ the IFS $\{\varphi_{i,J}^{\eta}\}_{i\in\Lambda}$
on $\mathbb{R}^{J}$, where 
\[
\varphi_{i,J}^{\eta}(x)=\left(\varphi_{i,j}^{\eta}(x_{j})\right)_{j\in J}\text{ for }i\in\Lambda\text{ and }(x_{j})_{j\in J}=x\in\mathbb{R}^{J}.
\]
For $1\le j\le d$ we write $\Phi_{j}^{\eta}$ in place of $\Phi_{\{j\}}^{\eta}$.
We also write $\Phi^{\eta}$ in place of $\Phi_{[d]}^{\eta}$, and
$\varphi_{i}^{\eta}$ in place of $\varphi_{i,[d]}^{\eta}$ for $i\in\Lambda$.

Let $\mu_{\eta}\in\mathcal{M}(\mathbb{R}^{d})$ denote the self-affine
measure generated by $\Phi^{\eta}$ and $p$. For $n\in\mathbb{Z}_{>0}$
set
\[
\mu_{\eta}^{(n)}:=\sum_{u\in\Lambda^{n}}p_{u}\delta_{\varphi_{u}^{\eta}(0)}\in\mathcal{M}(\mathbb{R}^{d}),
\]
where $p_{u}:=p_{i_{1}}\cdot...\cdot p_{i_{n}}$ and $\varphi_{u}^{\eta}:=\varphi_{i_{1}}^{\eta}\circ...\circ\varphi_{i_{n}}^{\eta}$
for $i_{1}...i_{n}=u\in\Lambda^{n}$. The measure $\mu_{\eta}^{(n)}$
may be thought of as the discrete level-$n$ approximation of $\mu_{\eta}$.

From now on, we fix $(\lambda_{1},...,\lambda_{d})=\lambda\in\Omega$
for the rest of the paper. Since $\lambda$ is fixed, we sometimes
do not explicitly indicate the dependence on $\lambda$ of various
parameters. Similarly, we also do not explicitly indicate dependence
on $\{a_{i,j}\}$ and $p$.

For $1\le j\le d$ set $\chi_{j}:=-\log\lambda_{j}$, and let $H(p):=-\sum_{i\in\Lambda}p_{i}\log p_{i}$
denote the entropy of $p$. Setting
\[
m=m(\lambda,p):=\max\left\{ 0\le j\le d\::\:\chi_{1}+...+\chi_{j}\le H(p)\right\} ,
\]
the Lyapunov dimension corresponding to $\Phi^{\lambda}$ and $p$
is defined by
\[
\dim_{L}(\Phi^{\lambda},p):=\begin{cases}
m+\frac{H(p)-\chi_{1}-...-\chi_{m}}{\chi_{m+1}} & ,\text{ if }m<d\\
d\frac{H(p)}{\chi_{1}+...+\chi_{d}} & ,\text{ if }m=d
\end{cases}.
\]

For brevity, we use the notation
\[
\gamma:=\min\left\{ d,\dim_{L}(\Phi^{\lambda},p)\right\} \text{ and }\kappa:=\chi_{d}\dim\mu_{\lambda}-\sum_{j=1}^{d-1}(\chi_{d}-\chi_{j}).
\]
It is easy to verify that for each $0\le j<d$,
\begin{equation}
\gamma\le j+\frac{H(p)-\chi_{1}-...-\chi_{j}}{\chi_{j+1}}.\label{eq:ub for LY-dim}
\end{equation}

\subsection{Algebraic notations}

Let $\{e_{1},...,e_{d}\}$ be the standard basis of $\mathbb{R}^{d}$.
Given $J\subset[d]$ denote by $\pi_{J}$ the orthogonal projection
onto $\mathrm{span}\{e_{j}\::\:j\in J\}$. Thus,
\[
\pi_{J}(x)=\sum_{j\in J}\left\langle e_{j},x\right\rangle e_{j}\text{ for }x\in\mathbb{R}^{d},
\]
and $\pi_{[0]}$ is identically $0$. We write $\pi_{j}$ in place
of $\pi_{\{j\}}$ for $1\le j\le d$. Given $\eta\in\Omega$ and $\emptyset\ne J\subset[d]$,
note that $\pi_{J}\mu_{\eta}$ is (an embedded copy of) the self-affine
measure corresponding to $\Phi_{J}^{\eta}$ and $p$, where $\pi_{J}\mu_{\eta}:=\mu_{\eta}\circ\pi_{J}^{-1}$
is the push-forward of $\mu_{\eta}$ via $\pi_{J}$. In particular,
$\pi_{J}\mu_{\eta}$ is exact dimensional.

Let $\mathbb{R}_{>0}$ be the group of positive real numbers. For
$r=(r_{1},...,r_{d})$ and $r'=(r_{1}',...,r_{d}')$ in $\mathbb{R}_{>0}^{d}$,
set
\[
rr':=(r_{1}r_{1}',...,r_{d}r_{d}')\text{ and }r^{-1}:=(r_{1}^{-1},...,r_{d}^{-1}),
\]
which makes $\mathbb{R}_{>0}^{d}$ into a multiplicative group. We
write $r'\le r$ whenever $r'_{j}\le r_{j}$ for each $1\le j\le d$,
thereby defining a partial order on $\mathbb{R}_{>0}^{d}$.

For $(x_{1},...,x_{d})=x\in\mathbb{R}^{d}$ set $rx:=(r_{1}x_{1},...,r_{d}x_{d})$,
which defines an action of $\mathbb{R}_{>0}^{d}$ on $\mathbb{R}^{d}$.
We sometimes write $xr$ in place of $rx$. Let $S_{r}:\mathbb{R}^{d}\rightarrow\mathbb{R}^{d}$
be defined by $S_{r}(x):=rx$. We often write $\det r$ instead of
$\det S_{r}$. Additionally, we often write $r'/r$ and $x/r$ in
place of $r^{-1}r'$ and $r^{-1}x$.

Set
\[
\left\lfloor x\right\rfloor :=\left(\left\lfloor x_{1}\right\rfloor ,...,\left\lfloor x_{d}\right\rfloor \right)\text{ and }r^{t}:=(r_{1}^{t},...,r_{d}^{t})\text{ for }t\in\mathbb{R},
\]
where $\left\lfloor t\right\rfloor $ is the integral part of $t\in\mathbb{R}$.
We denote by $|r|$ and $|x|$ the Euclidean norms of $r$ and $x$.
That is, $|r|:=\left(r_{1}^{2}+...+r_{d}^{2}\right)^{1/2}$ and similarly
for $|x|$. For $y\in\mathbb{R}^{d}$ we set $T_{x}(y):=x+y$.

\subsection{Notions of entropy}

\subsubsection{\label{subsec:Entropy-of-a partition}Entropy of a partition}

Let $(S,\mathcal{F})$ be a measurable space. Given a probability
measure $\theta$ on $S$ and a countable partition $\mathcal{D}\subset\mathcal{F}$
of $S$, the entropy of $\theta$ with respect to $\mathcal{D}$ is
defined by
\[
H(\theta,\mathcal{D}):=-\sum_{D\in\mathcal{D}}\theta(D)\log\theta(D).
\]
If $\mathcal{E}\subset\mathcal{F}$ is another countable partition
of $S$, the conditional entropy given $\mathcal{E}$ is defined as
follows:
\[
H(\theta,\mathcal{D}\mid\mathcal{E}):=\sum_{E\in\mathcal{E}}\theta(E)\cdot H(\theta_{E},\mathcal{D}),
\]
where $\theta_{E}:=\theta(E)^{-1}\theta|_{E}$ for $E\in\mathcal{E}$
with $\theta(E)>0$.

For basic properties of entropy and conditional entropy of a partition,
we refer the reader to \cite[Section 3.1]{Ho1}. These basic properties
will often be used without further reference.

\subsubsection{Average entropy}

For a bounded random vector $X$ in $\mathbb{R}^{d}$ and $r\in\mathbb{R}_{>0}^{d}$
we set
\begin{equation}
H\left(X;r\right):=\int_{[0,1)^{d}}H\left(\left\lfloor X/r+x\right\rfloor \right)\:dx,\label{eq:def of avg ent}
\end{equation}
where $H\left(\left\lfloor X/r+x\right\rfloor \right)$ denotes the
Shannon entropy of the discrete random variable $\left\lfloor X/r+x\right\rfloor $.
We call $H\left(X;r\right)$ the average entropy of $X$ at scale
$r$. Given $r'\in\mathbb{R}_{>0}^{d}$, we also set
\[
H\left(X;r\mid r'\right):=H\left(X;r\right)-H\left(X;r'\right).
\]
If $\mu$ is the distribution of $X$, we write $H\left(\mu;r\right)$
and $H\left(\mu;r\mid r'\right)$ in place of $H\left(X;r\right)$
and $H\left(X;r\mid r'\right)$. Basic properties of average entropy
are provided in Section \ref{subsec:Basic-properties-of avg ent}.

\subsubsection{Differential entropy}

Let $F:\mathbb{R}_{\ge0}\rightarrow\mathbb{R}$ be defined by $F(t)=-t\log t$
for $t>0$ and $F(0)=0$. If $X$ is an absolutely continuous random
vector in $\mathbb{R}^{d}$ with density $f:\mathbb{R}^{d}\rightarrow\mathbb{R}_{\ge0}$,
we write $H(X)$ for its differential entropy. That is,
\[
H(X):=\int F(f(x))\:dx.
\]
Note that,
\begin{equation}
H(x+rX)=H(X)+\log\det r\;\text{ for }r\in\mathbb{R}_{>0}^{d}\text{ and }x\in\mathbb{R}^{d}.\label{eq:trans =000026 scaling of diff ent}
\end{equation}

\subsubsection{\label{subsec:Random-walk-entropy}Random walk entropy}

For an affine IFS $\Psi=\{\psi_{i}\}_{i\in\Lambda}$, write $h_{RW}(\Psi,p)$
for the entropy of the random walk generated by $\Psi$ and $p$.
That is,
\[
h_{RW}(\Psi,p):=\lim_{n\to\infty}\frac{1}{n}H\left(\sum_{u\in\Lambda^{n}}p_{u}\delta_{\psi_{u}}\right),
\]
where $H(\cdot)$ denotes Shannon entropy of a discrete measure and
the limit exists by subadditivity. Note that by subadditivity we in
fact have,
\begin{equation}
h_{RW}(\Psi,p)=\underset{n\ge1}{\inf}\frac{1}{n}H\left(\sum_{u\in\Lambda^{n}}p_{u}\delta_{\psi_{u}}\right).\label{eq:h_RW =00003D inf}
\end{equation}

\subsection{\label{subsec:Sequences-with-integral ratios}Sequences with integral
ratios}

In order to avoid certain error terms, which our arguments are unable
to tolerate, it will often be preferable to consider average conditional
entropies of the form $H\left(\mu;r\mid Nr\right)$, where $\mu\in\mathcal{M}\left(\mathbb{R}^{d}\right)$,
$r\in\mathbb{R}_{>0}^{d}$ and $N\in\mathbb{Z}_{>0}^{d}$. For that
reason, in this subsection, we introduce elements $\{s_{n}\}_{n\ge0}\subset\mathbb{R}_{>0}^{d}$
with
\[
s_{n}/s_{n+1}\in\mathbb{Z}_{>0}^{d}\text{ and }\left|s_{n}/\lambda^{n}\right|,\left|\lambda^{n}/s_{n}\right|=O(1)\text{ for all }n\ge0.
\]

For each $1\le j\le d$, let us define by induction a sequence $\{s_{n,j}\}_{n\ge0}\subset(0,1]$
such that $s_{n,j}\ge\lambda_{j}^{n}$ for $n\ge0$. Let $1\le j\le d$
be given, and set $s_{0,j}:=1$. Let $n\ge0$ and suppose that $s_{n,j}$
has already been chosen. Let $b_{n+1,j}$ be the unique positive integer
with 
\[
\frac{s_{n,j}}{b_{n+1,j}}\ge\lambda_{j}^{n+1}>\frac{s_{n,j}}{1+b_{n+1,j}},
\]
and set $s_{n+1,j}=s_{n,j}/b_{n+1,j}$. This completes the inductive
construction of $\{s_{n,j}\}_{n\ge0}$.

For $n\ge0$ we set, 
\begin{equation}
s_{n}:=(s_{n,1},...,s_{n,d}).\label{def:sequ-int}
\end{equation}
By construction, $s_{n}/s_{n'}\in\mathbb{Z}_{>0}^{d}$ for integers
$n'\ge n\ge0$. Moreover, 
\[
\lambda_{j}^{n}\le\frac{s_{n-1,j}}{b_{n,j}}\le2\frac{s_{n-1,j}}{b_{n,j}+1}<2\lambda_{j}^{n},
\]
which gives 
\begin{equation}
\lambda_{j}^{n}\le s_{n,j}<2\lambda_{j}^{n}\text{ for }1\le j\le d\text{ and }n\ge0.\label{eq:prop of sequences}
\end{equation}

\subsection{\label{subsec:Non-conformal-partitions}Non-conformal partitions}

For $n\in\mathbb{Z}$ let $\mathcal{D}_{n}^{\mathbb{R}}$ denote the
level-$n$ dyadic partition of $\mathbb{R}$. That is,
\[
\mathcal{D}_{n}^{\mathbb{R}}:=\left\{ \Bigl[\frac{k}{2^{n}},\frac{k+1}{2^{n}}\Bigr)\::\:k\in\mathbb{Z}\right\} .
\]
Given $t\in\mathbb{R}$, we write $\mathcal{D}_{t}^{\mathbb{R}}$
in place of $\mathcal{D}_{\left\lfloor t\right\rfloor }^{\mathbb{R}}$.

Recall that $\chi_{j}:=-\log\lambda_{j}$ for $1\le j\le d$. For
each $n\in\mathbb{Z}$ define the following non-conformal partition
of $\mathbb{R}^{d}$,
\[
\mathcal{E}_{n}:=\left\{ D_{1}\times...\times D_{d}\::\:D_{j}\in\mathcal{D}_{\chi_{j}n}^{\mathbb{R}}\text{ for }1\le j\le d\right\} .
\]
The partitions $\mathcal{E}_{n}$ play a key role in \cite{Rap_SA_diag}. 

Given $C\ge1$, we say that two Borel partitions $\mathcal{D}$ and
$\mathcal{E}$ of $\mathbb{R}^{d}$ are $C$-commensurable if for
each $D\in\mathcal{D}$ and $E\in\mathcal{E}$,
\[
\#\left\{ E'\in\mathcal{E}\::\:E'\cap D\ne\emptyset\right\} \le C\text{ and }\#\left\{ D'\in\mathcal{D}\::\:D'\cap E\ne\emptyset\right\} \le C.
\]
In this situation,
\[
\left|H\left(\mu,\mathcal{D}\right)-H\left(\mu,\mathcal{E}\right)\right|=O\left(\log C\right)\text{ for \ensuremath{\mu\in\mathcal{M}(\mathbb{R}^{d}).}}
\]

It is easy to verify that for $n\in\mathbb{Z}$, $J\subset[d]$ and
$x\in\mathbb{R}^{d}$,
\begin{equation}
T_{x}^{-1}\pi_{J}^{-1}\mathcal{E}_{n}\text{ and }\pi_{J}^{-1}\mathcal{E}_{n}\text{ are }O(1)\text{-commensurable}.\label{eq:commens part by T_x}
\end{equation}
Additionally, for $k\in\mathbb{Z}$,
\begin{equation}
S_{\lambda^{k}}\pi_{J}^{-1}\mathcal{E}_{n}\text{ and }\pi_{J}^{-1}\mathcal{E}_{n+k}\text{ are }O(1)\text{-commensurable}.\label{eq:commens part by S_=00007Blambda^k=00007D}
\end{equation}
Similarly, by (\ref{eq:prop of sequences}) we get that for $k\in\mathbb{Z}_{\ge0}$,
\begin{equation}
S_{s_{k}}\pi_{J}^{-1}\mathcal{E}_{n}\text{ and }\pi_{J}^{-1}\mathcal{E}_{n+k}\text{ are }O(1)\text{-commensurable}.\label{eq:commens part by s_n}
\end{equation}

\subsection{\label{subsec:Basic-properties-of avg ent}Basic properties of average
entropy}

Let $\mu\in\mathcal{M}(\mathbb{R}^{d})$ be given. It follows from
(\ref{eq:prop of sequences}) that for $k\in\mathbb{Z}_{\ge0}$,
\begin{equation}
H\left(\mu,\mathcal{E}_{k}\right)=H\left(\mu;\lambda^{k}\right)+O(1)=H\left(\mu;s_{k}\right)+O(1).\label{eq:two defs of ent are equiv}
\end{equation}
From (\ref{eq:def of avg ent}) it is immediate that for $r,r'\in\mathbb{R}_{>0}^{d}$,
\begin{equation}
H\left(\mu;r\right)=H\left(S_{r'}\mu;r'r\right).\label{eq:scaling rel of avg ent}
\end{equation}
From (\ref{eq:def of avg ent}) it is also easy to deduce that for
$N\in\mathbb{Z}_{>0}^{d}$
\[
H\left(\mu;N^{-1}r\mid r\right)=\int_{[0,1)^{d}}H\left(\left\lfloor N\left(X/r+x\right)\right\rfloor \mid\left\lfloor X/r+x\right\rfloor \right)\:dx,
\]
where $X$ is a random vector with distribution $\mu$. This gives,
\begin{equation}
0\le H\left(\mu;N^{-1}r\mid r\right)\le\log\left(\det N\right).\label{eq:ub on cond avg ent int ratio}
\end{equation}

The following lemma establishes a connection between average and differential
entropies. Its proof is similar to the proof of \cite[Lemma 5]{varju2019absolute}
and is therefore omitted.
\begin{lem}
\label{lem:avg ent as dif ent}Let $X$ be a bounded random vector
in $\mathbb{R}^{d}$. Then for all $(r_{1},...,r_{d})=r\in\mathbb{R}_{>0}^{d}$,
we have 
\[
H(X;r)=H(X+I_{r})-\log\det r,
\]
where $I_{r}$ is a uniform random vector in $[0,r_{1}]\times...\times[0,r_{d}]$
independent of $X$.
\end{lem}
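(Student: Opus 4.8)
Lemma \ref{lem:avg ent as dif ent} asserts that for a bounded random vector $X$ in $\mathbb{R}^d$ and $r\in\mathbb{R}_{>0}^d$,
\[
H(X;r)=H(X+I_r)-\log\det r,
\]
where $I_r$ is uniform on $[0,r_1]\times\cdots\times[0,r_d]$, independent of $X$. The paper says the proof mimics \cite[Lemma 5]{varju2019absolute} and omits it; here is how I would carry it out.

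\textbf{Reduction to $r=(1,\dots,1)$.} The plan is to first normalize the scale away. Using the scaling relation \eqref{eq:scaling rel of avg ent}, $H(X;r)=H(S_{r^{-1}}X;\,\mathbf{1})$ where $\mathbf{1}=(1,\dots,1)$. On the differential-entropy side, \eqref{eq:trans =000026 scaling of diff ent} gives $H(X+I_r)=H\big(S_r(S_{r^{-1}}X + S_{r^{-1}}I_r)\big)=H(S_{r^{-1}}X+I_{\mathbf 1})+\log\det r$, since $S_{r^{-1}}I_r$ is uniform on $[0,1]^d$. So it suffices to prove the identity $H(X;\mathbf 1)=H(X+I_{\mathbf 1})$ for an arbitrary bounded random vector (now renamed $X$) and $I_{\mathbf 1}$ uniform on $[0,1)^d$ independent of $X$.

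\textbf{The core computation.} Write $U=I_{\mathbf 1}$, uniform on $[0,1)^d$. The random vector $X+U$ is absolutely continuous: conditionally on $X=x$ its density is $\mathbf 1_{x+[0,1)^d}$, so unconditionally its density is $f(y)=\mathbb{E}\big[\mathbf 1_{[0,1)^d}(y-X)\big]=\Pr\big(X\in y-[0,1)^d\big)$, a bounded measurable function supported on a compact set. The first step is to identify this density fibrewise. For $y\in\mathbb{R}^d$ write $y=\lfloor y\rfloor + \{y\}$ with $\{y\}\in[0,1)^d$; the map $y\mapsto(\lfloor y\rfloor,\{y\})\in\mathbb{Z}^d\times[0,1)^d$ is a measure-preserving bijection. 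Fix $x\in[0,1)^d$ (playing the role of the averaging variable) and $k\in\mathbb{Z}^d$: then $f(k+x)=\Pr\big(X\in k+x-[0,1)^d\big)$. Now observe the key combinatorial fact that the cell $k+x-[0,1)^d=\{z:\lfloor z/\mathbf 1 + (\mathbf 1 - x)\rfloor = k\}$ up to the boundary convention — more precisely, $\lfloor X + (\mathbf 1 - x)\rfloor = k$ iff $X \in k - \mathbf 1 + x + [0,1)^d = k + x - (\mathbf 1 - [0,1)^d)$, which matches $k+x-[0,1)^d$ after replacing $x$ by $\mathbf 1 - x$ inside the integral over the unit cube (a change of variables that preserves Lebesgue measure on $[0,1)^d$). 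Hence, with $x' := \mathbf 1 - x$ and the shifted floor variable, $\Pr\big(\lfloor X + x'\rfloor = k\big) = f(k + \mathbf 1 - x')$. Plugging into the definition of differential entropy and using the product decomposition of Lebesgue measure on $\mathbb{R}^d$:
\[
H(X+U)=\int_{\mathbb R^d} F(f(y))\,dy=\int_{[0,1)^d}\Big(\sum_{k\in\mathbb Z^d}F\big(f(k+x)\big)\Big)\,dx=\int_{[0,1)^d} H\big(\lfloor X+x'\rfloor\big)\,dx,
\]
where $F(t)=-t\log t$ and the inner sum is exactly the Shannon entropy of the $\mathbb Z^d$-valued random variable $\lfloor X+x'\rfloor$ because its atoms have probabilities $f(k+x)$ summing to $1$. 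Finally, substituting $x'=\mathbf 1 - x$ (Lebesgue-measure-preserving on the unit cube) turns the right-hand side into $\int_{[0,1)^d} H(\lfloor X+x\rfloor)\,dx = H(X;\mathbf 1)$ by \eqref{eq:def of avg ent}. This completes the proof.

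\textbf{Anticipated obstacle.} The only delicate point is bookkeeping the reflection $x\mapsto\mathbf 1-x$ relating the ``which cell contains $X$'' description to the ``$\lfloor X/r+x\rfloor$'' description in \eqref{eq:def of avg ent}, together with checking that $F(f(y))$ is genuinely integrable so that Tonelli/Fubini applies to split the integral over $\mathbb R^d$ into a sum over $k\in\mathbb Z^d$ of integrals over $[0,1)^d$. Integrability is clear since $f$ is bounded (by $1$) with compact support, so $F(f)$ is bounded with compact support; everything else is a routine change of variables, exactly as in \cite[Lemma 5]{varju2019absolute}, which is why the authors omit it.
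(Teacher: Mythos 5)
Your proof is correct and follows the same route the paper intends: the scaling reduction via \eqref{eq:scaling rel of avg ent} and \eqref{eq:trans =000026 scaling of diff ent}, followed by the Fubini/product decomposition that identifies the differential entropy of $X+I_{\mathbf 1}$ with $\int_{[0,1)^d}H(\lfloor X+x\rfloor)\,dx$, is precisely the $d$-dimensional analogue of the computation in \cite[Lemma 5]{varju2019absolute} that the authors cite and omit. The only point you gesture at rather than nail down is that the half-open/half-closed cell mismatch (caused by the reflection $x\mapsto\mathbf 1-x$) only affects a Lebesgue-null set of $x\in[0,1)^d$, so it is harmless in the outer integral; this is a one-line observation and does not affect the validity of the argument.
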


From Lemma \ref{lem:avg ent as dif ent} and (\ref{eq:trans =000026 scaling of diff ent})
we get that,

\begin{equation}
H\left(T_{x}\mu;r\right)=H\left(\mu;r\right)\text{ for all }\mu\in\mathcal{M}(\mathbb{R}^{d}),\:r\in\mathbb{R}_{>0}^{d}\text{ and }x\in\mathbb{R}^{d}.\label{eq:avg ent is trans inv}
\end{equation}

When the ratio of scales is non-integral, we have the following version
of (\ref{eq:ub on cond avg ent int ratio}).
\begin{lem}
\label{lem:cond avg ent is pos =000026 ub}Let $X$ be a bounded random
vector in $\mathbb{R}^{d}$, and let $r=(r_{1},...,r_{d})$ and $r'=(r_{1}',...,r_{d}')$
be in $\mathbb{R}_{>0}^{d}$ with $r\le r'$. Then, 
\[
0\le H\left(X;r\mid r'\right)\le\sum_{j=1}^{d}\log\left\lceil r_{j}'/r_{j}\right\rceil .
\]
\end{lem}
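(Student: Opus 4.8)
The plan is to read the lower bound $H(X;r\mid r')\ge0$ as the statement that $r\mapsto H(X;r)$ is non-increasing with respect to the coordinatewise partial order on $\mathbb{R}_{>0}^{d}$, and then to deduce the upper bound from this monotonicity by one integer refinement together with (\ref{eq:ub on cond avg ent int ratio}). So the whole lemma rests on establishing monotonicity of average entropy in the scale, and I would treat everything else as bookkeeping.

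First I would reduce monotonicity to the one-dimensional case. To prove $H(X;r)\ge H(X;r')$ for $r\le r'$, change one coordinate at a time: with $r^{(j)}=(r_{1},\dots,r_{j},r'_{j+1},\dots,r'_{d})$ one has $r=r^{(d)}\le r^{(d-1)}\le\cdots\le r^{(0)}=r'$, so it suffices to handle a pair of scale vectors agreeing in all but the $j$-th coordinate, where they take values $a\le b$. For such a pair, conditioning on the (unchanged) cells $\lfloor X_{i}/\cdot+x_{i}\rfloor$, $i\ne j$, and integrating out the phase $x_{j}$, the difference of average entropies becomes an average over the remaining phases of $\sum_{z}(\text{mass of }z)\big(H(X_{j}\mid z;a)-H(X_{j}\mid z;b)\big)$, where $H(\cdot;a)$ now denotes the one-dimensional average entropy of the conditional law of $X_{j}$. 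Hence it is enough to show: for a bounded real random variable $Y$ and $0<a\le b$, $H(Y;a)\ge H(Y;b)$.

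The hard part is this one-dimensional monotonicity, and here I would invoke Lemma \ref{lem:avg ent as dif ent} together with (\ref{eq:scaling rel of avg ent}) and the scaling rule for differential entropy to write $H(Y;a)=H(Y+aU)-\log a=H(a^{-1}Y+U)$, with $U$ uniform on $[0,1]$ independent of $Y$. It then suffices to prove that $t\mapsto H(tY+U)$ is non-decreasing, which I would do by a de Bruijn–type computation: assuming first that $Y$ has a smooth compactly supported density $g$, differentiate under the integral sign to get $\frac{d}{dt}H(tY+U)=\int vg(v)\log\frac{\mu([v-t^{-1},v])}{\mu([v,v+t^{-1}])}\,dv$ (where $\mu$ is the law of $Y$), and verify this is $\ge0$; note that without the factor $v$ the integrand integrates to $0$, so the point is exactly that $v$ and $\log\frac{\mu([v-\delta,v])}{\mu([v,v+\delta])}$ are positively correlated under $\mu$. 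The general case follows by applying this to $Y+\epsilon U'$ and letting $\epsilon\downarrow0$, using continuity of differential entropy along these convolutions. This sign verification is the only genuinely non-formal point; everything before and after it is soft.

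Granted the monotonicity, the upper bound is immediate. Set $N_{j}:=\lceil r'_{j}/r_{j}\rceil\in\mathbb{Z}_{>0}$ and $\rho_{j}:=r'_{j}/N_{j}$, so that $\rho\le r\le r'=N\rho$ with $N=(N_{1},\dots,N_{d})\in\mathbb{Z}_{>0}^{d}$. Then
\[
H(X;r\mid r')=\big(H(X;r)-H(X;\rho)\big)+H(X;\rho\mid r');
\]
the first summand is $\le0$ by the monotonicity just established (since $\rho\le r$), while $H(X;\rho\mid r')=H(X;N^{-1}r'\mid r')\le\log\det N=\sum_{j=1}^{d}\log\lceil r'_{j}/r_{j}\rceil$ by (\ref{eq:ub on cond avg ent int ratio}). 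Together with $H(X;r\mid r')\ge0$ (again monotonicity), this is the claim. Using (\ref{eq:avg ent is trans inv}) one may also assume $X$ is suitably normalised, though this is not needed.
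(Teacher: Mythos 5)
Your upper‑bound argument is essentially the paper's proof, just reflected: the paper takes $N=(\lceil r'_{j}/r_{j}\rceil)_{j}$, observes $Nr\ge r'$, and bounds $H(X;r\mid r')\le H(X;r\mid Nr)\le\log\det N$; you instead divide at the bottom, setting $\rho=N^{-1}r'\le r$, and write $H(X;r\mid r')=\bigl(H(X;r)-H(X;\rho)\bigr)+H(X;\rho\mid r')$. Both variants reduce to one application of monotonicity plus the integer‑ratio bound (\ref{eq:ub on cond avg ent int ratio}), so that part is fine.

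The real content is the lower bound (monotonicity of $r\mapsto H(X;r)$), which the paper does not prove but only refers to as being ``similar to'' the one‑dimensional case in \cite[Lemma 8]{varju2019absolute}. Your reduction to $d=1$ by conditioning on the unchanged coordinates is correct and is a useful observation the paper leaves implicit. Your derivative formula is also correct, and so is the identity $\int g(v)\log\frac{\mu([v-\delta,v])}{\mu([v,v+\delta])}\,dv=0$ (integration by parts: with $B(v)=\mu([v,v+\delta])$, the difference equals $\int B'\log B\,dv=[B\log B-B]_{-\infty}^{\infty}=0$). But the decisive step, the sign of $\int v\,g(v)\log\frac{A(v)}{B(v)}\,dv$, is where the argument actually lives, and the justification you give, that ``$v$ and $\log\frac{A(v)}{B(v)}$ are positively correlated,'' is a restatement of the claim, not an argument. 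In particular you cannot get this from a Chebyshev/FKG‑type coupling, because $v\mapsto\log\frac{A(v)}{B(v)}$ is not monotone in general: its derivative is $\frac{g(v)-g(v-\delta)}{A(v)}-\frac{g(v+\delta)-g(v)}{B(v)}$, which is negative wherever $g$ has a pronounced local minimum (bimodal densities give explicit examples), so the integrand genuinely changes sign in a way that is not controlled by a single ordering of $v$. Any correct proof along your lines would have to exploit quantitatively the boundary behaviour of $A$ and $B$ near the ends of the support of $\mu$, and that is not done here. So there is a genuine gap precisely at the point you flag as ``the only genuinely non‑formal point''; the paper avoids it by deferring to Varj\'u's one‑dimensional argument rather than re‑deriving monotonicity. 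Note also that the smoothing you propose to pass to general $Y$ ($Y+\epsilon U'$) does not by itself produce a \emph{smooth} density, so the approximation step would also need to be tightened (e.g.\ mollify by a compactly supported smooth kernel), though that is minor compared with the sign issue.
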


\begin{proof}
The proof of the inequality $H\left(X;r\mid r'\right)\ge0$ is similar
to the proof given in \cite[Lemma 8]{varju2019absolute} for the case
$d=1$ and is therefore omitted. In order to prove the second inequality,
set $N:=\left(\left\lceil r_{1}'/r_{1}\right\rceil ,...,\left\lceil r_{d}'/r_{d}\right\rceil \right)$.
By the first inequality and since $Nr\ge r'$, we have $H\left(X;r\mid r'\right)\le H\left(X;r\mid Nr\right)$.
The lemma now follows directly from (\ref{eq:ub on cond avg ent int ratio}).
\end{proof}
The following lemma shows that average entropy is continuous in its
first argument. Its proof is similar to the proof of \cite[Lemma 7]{varju2019absolute}
and is therefore omitted.
\begin{lem}
\label{lem:close rv --> close avg ent}There exists $C>1$, which
depends only on $d$, so that the following holds. Let $R_{1},R_{2}\in\mathbb{R}_{>0}$
be with $R_{1}\le C^{-1}R_{2}$, let $X,Y$ be bounded random vectors
in $\mathbb{R}^{d}$ so that $|X-Y|\le R_{1}$ almost surely, and
let $(r_{1},...,r_{d})=r\in\mathbb{R}_{>0}^{d}$ be with $r_{j}\ge R_{2}$
for $1\le j\le d$. Then,
\[
\left|H\left(X;r\right)-H\left(Y;r\right)\right|\le C\frac{R_{1}}{R_{2}}\log(R_{2}/R_{1}).
\]
\end{lem}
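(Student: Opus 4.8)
The plan is to realise both average entropies through a single randomisation of the translation offset and then compare the resulting discretisations coordinate by coordinate; this is in the spirit of the proof of \cite[Lemma 7]{varju2019absolute}. Let $U$ be uniformly distributed on $[0,1)^{d}$ and independent of $(X,Y)$, and put
\[
Z:=\left\lfloor X/r+U\right\rfloor ,\quad W:=\left\lfloor Y/r+U\right\rfloor ,\quad V:=Z-W.
\]
Since $X$ and $Y$ are bounded, for every fixed value of $U$ the random vectors $Z$ and $W$ take finitely many values, so all entropies below are finite, and by (\ref{eq:def of avg ent}) together with the definition of conditional entropy we have $H(X;r)=H(Z\mid U)$ and $H(Y;r)=H(W\mid U)$. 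The point of the whole argument is that $V$ is almost surely $0$, up to a controllably small probability.

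First I would check that $V$ takes values in $\{-1,0,1\}^{d}$ almost surely. Indeed $|X-Y|\le R_{1}$ forces $|X_{j}-Y_{j}|\le R_{1}$ for each coordinate $j$, hence $|X_{j}/r_{j}-Y_{j}/r_{j}|\le R_{1}/R_{2}<1$ provided $C\ge2$, and two reals at distance less than $1$ have floors, after any common real shift, differing by at most $1$. Next, using the elementary fact that $\{u\in[0,1):\left\lfloor a+u\right\rfloor \ne\left\lfloor b+u\right\rfloor \}$ has Lebesgue measure $\min\{|a-b|,1\}$, applied with $a=X_{j}/r_{j}$ and $b=Y_{j}/r_{j}$, one gets $\Pr(V_{j}\ne0\mid X,Y)\le R_{1}/R_{2}$ and therefore $q_{j}:=\Pr(V_{j}\ne0)\le R_{1}/R_{2}$. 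Since each $V_{j}\in\{-1,0,1\}$, a direct estimate gives $H(V_{j})\le-q_{j}\log q_{j}-(1-q_{j})\log(1-q_{j})+q_{j}$; as $t\mapsto-t\log t-(1-t)\log(1-t)+t$ is increasing on $[0,1/2]$ and $q_{j}\le R_{1}/R_{2}\le C^{-1}\le1/2$, this is at most $c_{0}(R_{1}/R_{2})\log(R_{2}/R_{1})$ for an absolute constant $c_{0}$. By subadditivity of entropy,
\[
H(V)\le\sum_{j=1}^{d}H(V_{j})\le d\,c_{0}\,\frac{R_{1}}{R_{2}}\log(R_{2}/R_{1}).
\]

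Finally I would transfer this to the difference of the two average entropies. For each fixed value of $U$ the map $(z,v)\mapsto(z-v,v)$ is a bijection of $\mathbb{Z}^{d}\times\{-1,0,1\}^{d}$ carrying the conditional law of $(Z,V)$ to that of $(W,V)$, so $H(Z,V\mid U)=H(W,V\mid U)$; hence, using also subadditivity and the fact that conditioning does not increase entropy,
\[
H(X;r)=H(Z\mid U)\le H(Z,V\mid U)=H(W,V\mid U)\le H(W\mid U)+H(V)=H(Y;r)+H(V).
\]
Running the same chain with the roles of $X$ and $Y$ exchanged (and noting $-V$ has the same entropy as $V$) gives $H(Y;r)\le H(X;r)+H(V)$, so $|H(X;r)-H(Y;r)|\le H(V)$ and the bound on $H(V)$ finishes the proof with, say, $C:=\max\{2,d\,c_{0}\}$. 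I expect no serious obstacle: the comparison is coordinatewise and, crucially, free of any additive error term, precisely because of the averaging over the offset built into the definition of $H(\cdot;r)$. The one point requiring care — and the reason one assumes $R_{1}\le C^{-1}R_{2}$ rather than merely $R_{1}<R_{2}$ — is keeping $R_{1}/R_{2}$ below an absolute threshold, so that the binary entropy $-q\log q-(1-q)\log(1-q)$ is genuinely of order $q\log(1/q)$.
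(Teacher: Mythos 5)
Your argument is correct, and it follows the same route the paper points to (the omitted proof is said to mirror Varj\'o's Lemma~7, which is precisely this randomised-offset comparison). All the individual steps check out: the identity $H(X;r)=H(\lfloor X/r+U\rfloor\mid U)$ is exactly the definition of average entropy when $U$ is an independent uniform offset; the per-coordinate bound $\Pr(V_j\ne 0\mid X,Y)\le |X_j-Y_j|/r_j\le R_1/R_2$ via the measure of $\{u:\lfloor a+u\rfloor\ne\lfloor b+u\rfloor\}$ is right; the bound $H(V_j)\le h(q_j)+q_j\le c_0(R_1/R_2)\log(R_2/R_1)$ holds once $R_1/R_2\le 1/2$ (with $\log$ base $2$ so that $\log 2=1$, as the paper fixes); the bijection $(z,v)\mapsto(z-v,v)$ gives $H(Z,V\mid U)=H(W,V\mid U)$ conditionally on each $U=u$; and then subadditivity plus ``conditioning decreases entropy'' yields $|H(X;r)-H(Y;r)|\le H(V)\le\sum_j H(V_j)$. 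Taking $C=\max\{2,dc_0\}$ closes the loop, with $C$ depending only on $d$ as required.
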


For $r,r'\in\mathbb{R}_{>0}^{d}$ and a nonnegative compactly supported
Borel measure $\theta$ on $\mathbb{R}^{d}$ of total mass $c>0$,
we write $H(\theta;r)$ and $H(\theta;r\mid r')$ in place of $cH(c^{-1}\theta;r)$
and $cH(c^{-1}\theta;r\mid r')$. The proof of the following lemma
is similar to the proof of \cite[Lemma 10]{varju2019absolute} and
is therefore omitted.
\begin{lem}
\label{lem:ent of sum of pos meas >=00003D}Let $\mu_{1},...,\mu_{k}$
be nonnegative compactly supported Borel measures on $\mathbb{R}^{d}$,
and let $r,r'\in\mathbb{R}_{>0}^{d}$ be with $r'/r\in\mathbb{Z}_{>0}^{d}$.
Then,
\[
H\left(\mu_{1}+...+\mu_{k};r\mid r'\right)\ge H\left(\mu_{1};r\mid r'\right)+...+H\left(\mu_{k};r\mid r'\right).
\]
\end{lem}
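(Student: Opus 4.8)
The plan is to reduce the inequality to a single, scale‑by‑scale application of the log–sum inequality, just as in the one‑dimensional argument of \cite[Lemma 10]{varju2019absolute}; the only extra work is the $d$‑dimensional offsets and the mass‑normalisation convention $H(\theta;\cdot)=\Vert\theta\Vert\,H(\theta/\Vert\theta\Vert;\cdot)$.

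First I would exploit the hypothesis $r'/r\in\mathbb{Z}_{>0}^{d}$. Set $N:=r'/r\in\mathbb{Z}_{>0}^{d}$. As recorded just before \eqref{eq:ub on cond avg ent int ratio}, for a compactly supported finite measure $\theta$ on $\mathbb{R}^{d}$ one has, with the mass convention (the identity being $1$‑homogeneous in $\theta$),
\[
H(\theta;r\mid r')=\int_{[0,1)^{d}}H\left(\theta,\mathcal{C}_{x}\mid\mathcal{D}_{x}\right)\,dx,
\]
where $\mathcal{D}_{x}$ is the Borel partition of $\mathbb{R}^{d}$ whose atoms are the fibres of $y\mapsto\lfloor y/r'+x\rfloor$, and $\mathcal{C}_{x}$ is the refinement whose atoms are the fibres of $y\mapsto\lfloor N(y/r'+x)\rfloor$. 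The integrality of $N$ enters exactly here: coordinate‑wise, for a positive integer $M$ the value $\lfloor Mt\rfloor$ determines $\lfloor t\rfloor=\lfloor\lfloor Mt\rfloor/M\rfloor$, so $\mathcal{C}_{x}$ refines $\mathcal{D}_{x}$ for \emph{every} $x$. Since the $\mu_{i}$ are compactly supported, for each $x$ only finitely many atoms of $\mathcal{D}_{x}$ carry mass, so there are no convergence or measurability subtleties and Fubini applies freely.

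The core step is the pointwise estimate: for each fixed $x\in[0,1)^{d}$,
\[
H\left(\textstyle\sum_{i=1}^{k}\mu_{i},\,\mathcal{C}_{x}\mid\mathcal{D}_{x}\right)\ \ge\ \sum_{i=1}^{k}H\left(\mu_{i},\,\mathcal{C}_{x}\mid\mathcal{D}_{x}\right).
\]
Since $\mathcal{C}_{x}$ refines $\mathcal{D}_{x}$, for any finite measure $\theta$ the conditional entropy, interpreted via the mass convention, is simply $-\sum_{D\in\mathcal{D}_{x}}\sum_{C\in\mathcal{C}_{x},\,C\subset D}\theta(C)\log\big(\theta(C)/\theta(D)\big)$ — no logarithm of the total mass appears, because within each $D$ only ratios occur. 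Collecting the terms over each pair $C\subset D$, and writing $a_{i}:=\mu_{i}(C)$, $b_{i}:=\mu_{i}(D)$ (so $0\le a_{i}\le b_{i}$ as $C\subset D$), the inequality reduces, pair by pair, to $\big(\sum_{i}a_{i}\big)\log\big(\sum_{i}a_{i}/\sum_{i}b_{i}\big)\le\sum_{i}a_{i}\log(a_{i}/b_{i})$, which is precisely the log–sum inequality — a direct consequence of the convexity of $t\mapsto t\log t$, with the usual convention $0\log 0=0$.

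It then only remains to integrate the pointwise inequality over $x\in[0,1)^{d}$ and apply the displayed identity once to $\sum_{i}\mu_{i}$ and once to each $\mu_{i}$, which yields the lemma. I do not expect a genuine obstacle: the argument is routine, and the only points needing care are (i) verifying that the offset‑averaging identity together with $r'/r\in\mathbb{Z}_{>0}^{d}$ really presents $H(\cdot;r\mid r')$ as $\int_{[0,1)^{d}}H(\cdot,\mathcal{C}_{x}\mid\mathcal{D}_{x})\,dx$ with $\mathcal{C}_{x}$ refining $\mathcal{D}_{x}$, and (ii) tracking the mass convention through the computation. Without the integrality hypothesis the refinement property, and with it the conclusion, fails.
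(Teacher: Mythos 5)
Your proof is correct, and it follows essentially the same route the paper points to (the paper omits the proof, citing \cite[Lemma 10]{varju2019absolute}, which is the one-dimensional version of precisely this averaging-plus-log-sum argument). In particular, you correctly (i) use integrality of $N=r'/r$ to rewrite $H(\cdot;r\mid r')$ as an integral over offsets of conditional entropies with $\mathcal{C}_{x}$ refining $\mathcal{D}_{x}$, (ii) note that the $1$-homogeneous mass convention leaves only ratios $\theta(C)/\theta(D)$ in the conditional entropy, and (iii) reduce the pointwise inequality to the log--sum inequality pair by pair; nothing is missing.
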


\subsection{Submodularity inequality and applications}

We shall need the following result from \cite[Theorem 1]{madiman2008entropy}
(see also \cite[Theorem 10]{BV-entropy}).
\begin{thm}[Submodularity inequality]
\label{thm:submodularity}Let $X,Y,Z$ be independent random vectors
in $\mathbb{R}^{d}$. Suppose that $Y$, $X+Y$, $Y+Z$ and $X+Y+Z$
are absolutely continuous with finite differential entropy. Then,
\[
H\left(X+Y+Z\right)+H\left(Y\right)\le H\left(X+Y\right)+H\left(Y+Z\right).
\]

\end{thm}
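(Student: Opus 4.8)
This inequality is classical; it is \cite[Theorem~1]{madiman2008entropy} (see also \cite[Theorem~10]{BV-entropy}), so below I only recall the standard short proof. Write $H(U\mid V):=\int H(U\mid V=v)\,d\mu_{V}(v)$ for conditional differential entropy and $I(U;V):=H(U)-H(U\mid V)$ for mutual information; the hypothesis that $Y$, $X+Y$, $Y+Z$ and $X+Y+Z$ are absolutely continuous with finite differential entropy ensures that every such quantity below is well defined and finite, and that $I(\cdot;\cdot)\ge0$. It suffices to establish the equivalent difference form
\[
H(X+Y+Z)-H(Y+Z)\le H(X+Y)-H(Y).
\]

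\emph{Step 1: conditioning on $Z$.} Since $Z$ is independent of $(X,Y)$ and differential entropy is translation invariant (see~(\ref{eq:trans =000026 scaling of diff ent})), for $\mu_{Z}$-almost every $z$ we have $H(X+Y+Z\mid Z=z)=H(X+Y+z)=H(X+Y)$ and likewise $H(Y+Z\mid Z=z)=H(Y+z)=H(Y)$. Integrating in $z$ gives $H(X+Y+Z\mid Z)=H(X+Y)$ and $H(Y+Z\mid Z)=H(Y)$. Substituting these into the target inequality and rearranging, it is equivalent to
\[
H(X+Y+Z)-H(X+Y+Z\mid Z)\le H(Y+Z)-H(Y+Z\mid Z),
\]
that is, to $I(Z;X+Y+Z)\le I(Z;Y+Z)$.

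\emph{Step 2: data processing.} I claim that $Z\to Y+Z\to X+Y+Z$ is a Markov chain, i.e.\ conditionally on $Y+Z$ the vector $X+Y+Z$ is independent of $Z$. Indeed, on the event $\{Y+Z=w\}$ one has $X+Y+Z=X+w$, while $X$ is independent of $(Y,Z)$ and hence of $Z$; thus the conditional law of $X+Y+Z$ given $\{Y+Z=w,\ Z=z\}$ equals that of $X+w$ and does not depend on $z$. The data processing inequality for mutual information then yields $I(Z;X+Y+Z)\le I(Z;Y+Z)$, which is precisely the inequality obtained in Step~1. This proves the theorem.

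Since the statement is known, no substantial obstacle arises: the only point needing attention is the measure-theoretic bookkeeping, namely verifying that the stated absolute-continuity and finiteness hypotheses make all conditional entropies and mutual informations above finite and legitimize the chain rule, the non-negativity of $I$, and the data processing inequality in the continuous setting. Alternatively, one may simply invoke \cite[Theorem~1]{madiman2008entropy} or \cite[Theorem~10]{BV-entropy} directly, as indicated in the statement of the theorem.
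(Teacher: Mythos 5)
Your proof is correct and is the standard data-processing argument: conditioning on $Z$ and using translation invariance turns the inequality into $I(Z;X+Y+Z)\le I(Z;Y+Z)$, which follows from the Markov chain $Z\to Y+Z\to X+Y+Z$. The paper itself does not prove Theorem~\ref{thm:submodularity} but simply cites \cite[Theorem~1]{madiman2008entropy} (see also \cite[Theorem~10]{BV-entropy}), where exactly this argument appears, so your proof and the paper's treatment agree.
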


The following lemma shows that, in the case of integral ratio of scales,
average conditional entropy does not decrease under convolution. Its
proof, which relies on Theorem \ref{thm:submodularity}, is similar
to the proof of \cite[Lemma 6]{varju2019absolute} and is therefore
omitted.
\begin{lem}
\label{lem:conv don't dec}For $\mu,\nu\in\mathcal{M}(\mathbb{R}^{d})$
and $r,r'\in\mathbb{R}_{>0}^{d}$ with $r'/r\in\mathbb{Z}_{>0}^{d}$,
we have $H\left(\mu*\nu;r\mid r'\right)\ge H\left(\mu;r\mid r'\right)$.
\end{lem}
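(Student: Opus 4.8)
The plan is to reduce the statement to the submodularity inequality (Theorem \ref{thm:submodularity}) by passing to differential entropies, exactly as in \cite[Lemma 6]{varju2019absolute} but carried out coordinatewise. Fix independent random vectors $X\sim\mu$ and $W\sim\nu$, so that $\mu*\nu$ is the law of $X+W$. By Lemma \ref{lem:avg ent as dif ent}, for any compactly supported $\theta\in\mathcal{M}(\mathbb{R}^{d})$ and $r,r'\in\mathbb{R}_{>0}^{d}$ we have
\[
H(\theta;r\mid r')=H(Z_\theta+I_r)-H(Z_\theta+I_{r'})+\log\big(\det r'/\det r\big),
\]
where $Z_\theta\sim\theta$ and $I_s$ denotes a uniform vector on $[0,s_1]\times\dots\times[0,s_d]$ independent of everything else. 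Since the term $\log(\det r'/\det r)$ is the same for $\theta=\mu$ and $\theta=\mu*\nu$, the desired inequality $H(\mu*\nu;r\mid r')\ge H(\mu;r\mid r')$ is equivalent to
\[
H(X+W+I_r)+H(X+I_{r'})\ \ge\ H(X+I_r)+H(X+W+I_{r'}).
\]

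The next step uses the hypothesis $r'/r\in\mathbb{Z}_{>0}^{d}$. Writing $N_j:=r_j'/r_j\in\mathbb{Z}_{>0}$, a uniform variable on $[0,r_j']=[0,N_jr_j]$ has the same law as $U_j+r_jK_j$, where $U_j$ is uniform on $[0,r_j]$ and $K_j$ is uniform on $\{0,\dots,N_j-1\}$, independent. Consequently $I_{r'}$ equals in distribution $I_r+V$, where $V:=(r_1K_1,\dots,r_dK_d)$ is a discrete random vector independent of $I_r$ and of $(X,W)$. Substituting this and setting $Y:=X+I_r$ (which is then independent of $W$ and of $V$), the inequality to prove becomes
\[
H(Y+W)+H(Y+V)\ \ge\ H(Y)+H(Y+W+V),
\]
which is precisely the submodularity inequality of Theorem \ref{thm:submodularity} applied to the independent triple $(W,Y,V)$ (in the roles of $X,Y,Z$ there).

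It remains to check the absolute-continuity and finiteness hypotheses of Theorem \ref{thm:submodularity}. Here $Y=X+I_r$ is compactly supported with density bounded by $(\det r)^{-1}$, hence absolutely continuous with finite differential entropy; convolving further with the independent, compactly supported $W$ and/or with the discrete $V$ keeps the support compact and the density bounded (a finite mixture of shifts of a bounded density is bounded), so $Y+W$, $Y+V$ and $Y+W+V$ are all absolutely continuous with finite differential entropy. This verifies the hypotheses and completes the argument. I do not expect any genuine obstacle: the only real content is the decomposition $I_{r'}\,{\buildrel d \over =}\,I_r+V$ forced by integrality of $r'/r$, after which the estimate is a single application of submodularity; the verification of the finiteness hypotheses is routine given that all measures involved are compactly supported.
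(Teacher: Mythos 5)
Your proof is correct and is exactly the argument the paper has in mind: the paper omits the proof but explicitly states that it relies on Theorem \ref{thm:submodularity} and follows \cite[Lemma 6]{varju2019absolute}, which is precisely the reduction you carry out via Lemma \ref{lem:avg ent as dif ent} and the decomposition $I_{r'}\stackrel{d}{=}I_r+V$. The verification of the absolute continuity and finiteness hypotheses is also done correctly.
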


The following statement is a version for average entropy of a classical
lemma due to Kaimanovich and Vershik \cite{kaimanovich_and_vershik}.
Given $\nu\in\mathcal{M}(\mathbb{R}^{d})$ and $k\ge1$, we write
$\nu^{*k}$ for the $k$-fold convolution of $\nu$ with itself.
\begin{lem}
\label{lem:KV by submodularity}Let $\mu,\nu\in\mathcal{M}(\mathbb{R}^{d})$
and $(r_{1},...,r_{d})=r\in\mathbb{R}_{>0}^{d}$ be given. Then for
each $k\ge1$,
\[
H\left(\mu*\nu^{*k};r\right)-H\left(\mu;r\right)\le k\left(H\left(\mu*\nu;r\right)-H\left(\mu;r\right)\right).
\]
\end{lem}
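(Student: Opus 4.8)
Lemma \ref{lem:KV by submodularity} is an average-entropy analogue of the Kaimanovich--Vershik subadditivity lemma, and the natural strategy is to reduce it to the differential-entropy world via Lemma \ref{lem:avg ent as dif ent}, where the submodularity inequality (Theorem \ref{thm:submodularity}) is available. First I would let $X$, $Y$ be independent random vectors distributed according to $\mu$ and $\nu$ respectively, and also bring in $I_{r}$, the uniform random vector on $[0,r_{1}]\times\cdots\times[0,r_{d}]$, independent of everything. By Lemma \ref{lem:avg ent as dif ent}, each quantity $H(\cdot;r)$ appearing in the statement becomes a differential entropy shifted by the constant $\log\det r$, which cancels in all the differences. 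So it suffices to prove, for independent $X$ and i.i.d. copies $Y_{1},Y_{2},\ldots$ of $Y$ (all independent of $I_{r}$),
\[
H\!\left(X+Y_{1}+\cdots+Y_{k}+I_{r}\right)-H(X+I_{r})\le k\Big(H(X+Y_{1}+I_{r})-H(X+I_{r})\Big).
\]
Note the convolution with a genuinely absolutely continuous vector $I_{r}$ guarantees that all the sums in sight are absolutely continuous with finite differential entropy, so Theorem \ref{thm:submodularity} applies without integrability worries.

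**The induction.** I would prove the displayed inequality by induction on $k$, the base case $k=1$ being trivial. For the inductive step, write $S_{j}:=Y_{1}+\cdots+Y_{j}$ and abbreviate $W:=X+I_{r}$. The key is to apply Theorem \ref{thm:submodularity} with the three independent blocks $S_{k-1}$ (playing the role of $X$ in that theorem), $W$ (playing $Y$), and $Y_{k}$ (playing $Z$):
\[
H(S_{k-1}+W+Y_{k})+H(W)\le H(S_{k-1}+W)+H(W+Y_{k}).
\]
Here $W+Y_{k}=X+Y_{k}+I_{r}$ has the same distribution as $X+Y_{1}+I_{r}$, and $S_{k-1}+W=X+S_{k-1}+I_{r}$, and $S_{k-1}+W+Y_{k}=X+S_{k}+I_{r}$. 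Rearranging,
\[
H(X+S_{k}+I_{r})-H(X+S_{k-1}+I_{r})\le H(X+Y_{1}+I_{r})-H(X+I_{r}).
\]
Summing this over telescoping layers, or more cleanly adding it to the inductive hypothesis for $k-1$ (which bounds $H(X+S_{k-1}+I_{r})-H(X+I_{r})$ by $(k-1)(H(X+Y_{1}+I_{r})-H(X+I_{r}))$), yields the bound for $k$. Translating back through Lemma \ref{lem:avg ent as dif ent} completes the proof.

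**Where the work is.** The only genuinely delicate point is justifying the hypotheses of Theorem \ref{thm:submodularity} --- that each of $W=X+I_{r}$, $S_{k-1}+W$, $W+Y_{k}$, and $S_{k-1}+W+Y_{k}$ is absolutely continuous with \emph{finite} differential entropy. Absolute continuity is immediate since each is the convolution of something with the uniform density of $I_{r}$, which has a bounded compactly supported density; the resulting densities are then bounded, so $F(f)$ is bounded above, and compact support gives the lower bound, hence finiteness. The rest is bookkeeping: tracking that $I_{r}$ is shared across all terms so that its $\log\det r$ shifts cancel in every difference, and matching up the distributions of the various sums. Since the paper states (in the remark preceding Lemma \ref{lem:KV by submodularity}) that this average-entropy version ``seems to be new,'' and since its proof is this short reduction plus one clean application of submodularity, it is plausible the authors give the argument in full rather than omitting it as with the neighbouring lemmas.
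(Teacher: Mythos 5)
Your proof is correct and follows essentially the same route as the paper: reduce to differential entropy via Lemma \ref{lem:avg ent as dif ent} (the smoothing by $I_{r}$ cancels the $\log\det r$ terms and supplies absolute continuity), then apply the submodularity inequality of Theorem \ref{thm:submodularity}. The only cosmetic difference is that you derive $\delta_{k}\le\delta_{1}$ directly by taking $Y=X+I_{r}$ as the middle variable in Theorem \ref{thm:submodularity}, whereas the paper takes $Y=I_{r}+X+Y_{1}+\cdots+Y_{k-1}$ as the middle variable to get the slightly stronger monotonicity $\delta_{k+1}\le\delta_{k}$; both choices telescope to the same conclusion.
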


\begin{proof}
For $k\ge1$ set,
\[
\delta_{k}:=H\left(\mu*\nu^{*k};r\right)-H\left(\mu*\nu^{*(k-1)};r\right).
\]
In order to prove the lemma, it suffices to show that $\delta_{k+1}\le\delta_{k}$
for each $k\ge1$. Fix $k\ge1$, let $X$ be a random vector with
distribution $\mu$, let $Y_{1},...,Y_{k+1}$ be random vectors distributed
according to $\nu$, and let $I_{r}$ be a uniform random vector in
$[0,r_{1}]\times...\times[0,r_{d}]$. Suppose that $X,Y_{1},...,Y_{k+1},I_{r}$
are all independent. Setting $Z:=I_{r}+X+Y_{1}+...+Y_{k-1}$, it follows
from Lemma \ref{lem:avg ent as dif ent} that
\[
\delta_{k}=H\left(Z+Y_{k+1}\right)-H\left(Z\right)\text{ and }\delta_{k+1}=H\left(Z+Y_{k}+Y_{k+1}\right)-H\left(Z+Y_{k}\right).
\]
Together with Theorem \ref{thm:submodularity} this gives $\delta_{k+1}\le\delta_{k}$,
which completes the proof of the lemma.
\end{proof}

\section{\label{sec:An-entropy-increase result}An entropy increase result}

The purpose of this section is to prove Theorem \ref{thm:effective ent inc result}.
The general strategy follows the proof of \cite[Theorem 3]{varju2019absolute},
but, as explained in Section \ref{subsec:About-the-proof}, there
are substantial differences in the argument. In Section \ref{subsec:Measure-decomposition},
decompositions of elements of $\mathcal{M}(\mathbb{R}^{d})$ are constructed
in which a non-negligible part of the mass is captured by Bernoulli
measures of a certain scale. In Section \ref{subsec:Entropy-increase-with bernoulli},
entropy increase is obtained for convolutions with Bernoulli measures.
In Section \ref{subsec:Proof-of-ent increase result}, we use these
results in order to complete the proof of Theorem \ref{thm:effective ent inc result}.

\subsection{\label{subsec:Measure-decomposition}Measure decomposition}

The following proposition, which extends \cite[Proposition 21]{varju2019absolute},
is the main result of this subsection. Given a nonnegative measure
$\theta$, we write $\Vert\theta\Vert$ for its total mass. Recall
the sequence $\{s_{n}\}_{n\ge0}\subset\mathbb{R}_{>0}^{d}$ defined
in Section \ref{subsec:Sequences-with-integral ratios}.
\begin{prop}
\label{prop:measure decomp}For all $N\ge N(\lambda)\ge1$ there exists
$\epsilon=\epsilon(\lambda,N)>0$ such that the following holds. Let
$n\ge N$ and let $\nu\in\mathcal{M}(\mathbb{R}^{d})$ be finitely
supported and with,
\[
H\left(\nu;s_{n+N}\mid s_{n}\right)\le\frac{3}{2}H\left(\nu;s_{n}\mid s_{n-N}\right).
\]
Then $\nu=\theta+\zeta_{1}+...+\zeta_{L}$, where $\theta,\zeta_{1},...,\zeta_{L}$
are nonnegative measures,
\[
\Vert\zeta_{1}\Vert+...+\Vert\zeta_{L}\Vert\ge\epsilon\frac{H\left(\nu;s_{n}\mid s_{n-N}\right)}{\max\left\{ 1,-\log H\left(\nu;s_{n}\mid s_{n-N}\right)\right\} },
\]
and for each $1\le i\le L$ there exist $x_{i},y_{i}\in\mathbb{R}^{d}$
so that $\zeta_{i}=\frac{\Vert\zeta_{i}\Vert}{2}(\delta_{x_{i}}+\delta_{y_{i}})$
and $\epsilon\le|s_{n}^{-1}(x_{i}-y_{i})|\le\epsilon^{-1}$.
\end{prop}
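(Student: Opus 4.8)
The plan is to build the Bernoulli components one per cell of the coarse grid. Write $q:=H(\nu;s_{n}\mid s_{n-N})$. Since $s_{n-N}/s_{n}\in\mathbb{Z}_{>0}^{d}$, Lemma \ref{lem:cond avg ent is pos =000026 ub} gives $0\le q\le\sum_{j}\log\lceil s_{n-N,j}/s_{n,j}\rceil=:C(\lambda,N)$, so $\max\{1,-\log q\}$ is a bounded positive quantity; hence it suffices to extract Bernoulli mass $\gtrsim_{\lambda,N}q/\max\{1,-\log q\}$ and then choose $\epsilon=\epsilon(\lambda,N)$ small (in particular $\epsilon<\lambda_{d}^{N}$) to absorb the implied constant. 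Because $s_{n-N}/s_{n}$ is integral, for every grid translate the integrand defining $q$ is a genuine conditional entropy; fixing a translate that realizes at least the average, one gets $q\le\sum_{J}\nu(J)\,a_{J}$, where $J$ runs over the level-$s_{n-N}$ cells, $\nu_{J}$ is the normalized restriction of $\nu$ to $J$, and $a_{J}$ is the entropy of the distribution of $\nu_{J}$ among the $m_{0}=m_{0}(\lambda,N)\asymp\prod_{j}\lambda_{j}^{-N}$ level-$s_{n}$ cells contained in $J$. Note $q$ involves only pairs of atoms of $\nu$ lying in a common $J$, and any such pair is at rescaled distance $\le\mathrm{diam}(s_{n}^{-1}J)\lesssim_{\lambda,N}\lambda_{d}^{-N}\le\epsilon^{-1}$ once $\epsilon$ is small; so the upper separation bound $\epsilon\le|s_{n}^{-1}(x-y)|\le\epsilon^{-1}$ will take care of itself.

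The crux is to show that a definite fraction of $q$ is carried by pairs of atoms at rescaled distance at least $\epsilon$, i.e. by pairs from which a Bernoulli piece of scale $\asymp s_{n}$ can be extracted. Split the spreading within each $J$ into the part coming from clusters of atoms of rescaled diameter $<\epsilon$ and the complementary part. A sub-$\epsilon$ cluster straddles a level-$s_{n}$ cell boundary with probability $\lesssim\epsilon$, so contributes $\lesssim\epsilon$ to $q$ per unit mass; but viewed at scale $s_{n+N}$ its relative diameter (still $<1$, since $\epsilon<\lambda_{d}^{N}$) has grown by the factor $\asymp\lambda^{-N}$, so it contributes a factor $\gtrsim_{\lambda,N}\lambda_{1}^{-N}$ more to $H(\nu;s_{n+N}\mid s_{n})$ than to $q$. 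Since $\lambda_{1}^{-N}>3$ for $N\ge N(\lambda)$, the hypothesis $H(\nu;s_{n+N}\mid s_{n})\le\tfrac{3}{2}q$ forces the total cluster contribution to $q$ to be $\le\tfrac{1}{2}q$, so that the complementary, extractable part satisfies $\sum_{J}\nu(J)\,\tilde a_{J}\ge\tfrac{1}{2}q$, where $\tilde a_{J}$ is the residual entropy of $\nu_{J}$ among level-$s_{n}$ cells after collapsing the sub-$\epsilon$ clusters. This is where the case $d>1$ genuinely departs from \cite{varju2019absolute}: the cluster geometry, the interplay between $\epsilon$, $\lambda^{N}$ and the constant $\tfrac{3}{2}$, and the precise bookkeeping — best organized through average entropies at intermediate scales — must all be redone. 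Given this, a max-flow argument matches within each $J$ a sub-measure of $\nu|_{J}$ across the collapsed clusters, written as a sum of terms $\tfrac{c}{2}(\delta_{x}+\delta_{y})$ with $x,y$ atoms of $\nu$ at rescaled distance in $[\epsilon,\epsilon^{-1}]$; and since $J$ carries at most $m_{0}$ level-$s_{n}$ cells, the elementary fact that a probability vector on at most $m_{0}$ atoms with entropy $a$ has mass $\gtrsim_{\lambda,N}a/\max\{1,-\log a\}$ outside its heaviest atom yields a matched mass $\gtrsim_{\lambda,N}\nu(J)\,\tilde a_{J}/\max\{1,-\log\tilde a_{J}\}$.

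Finally one sums over $J$. Since $t\mapsto t/\max\{1,-\log t\}$ equals $t/\log(1/t)$ on $(0,1/2]$, where a direct computation gives $\frac{d^{2}}{dt^{2}}\bigl(t/\log(1/t)\bigr)>0$, Jensen's inequality with the weights $\nu(J)$ gives $\sum_{J}\nu(J)\,\tilde a_{J}/\max\{1,-\log\tilde a_{J}\}\ge(\tfrac{1}{2}q)/\max\{1,-\log(\tfrac{1}{2}q)\}\gtrsim q/\max\{1,-\log q\}$ when $q\le 1$; when $q\in(1,C(\lambda,N)]$ the measure $\nu$ is spread enough at scale $s_{n}$ that a fixed amount of Bernoulli mass is extracted directly. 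Relabelling the pieces obtained from all $J$ as $\zeta_{1},\dots,\zeta_{L}$ and setting $\theta:=\nu-\sum_{i}\zeta_{i}\ge 0$ gives the claimed decomposition, after choosing $\epsilon=\epsilon(\lambda,N)$ small enough to absorb every constant that appeared. I expect the main obstacle to be the second paragraph: the quantitative argument — which has no one-dimensional analogue in the required form — that the entropy hypothesis localizes a constant fraction of $q$ on pairs of atoms whose rescaled separation lies in the fixed window $[\epsilon,\epsilon^{-1}]$, together with the attendant $d$-dimensional cluster combinatorics.
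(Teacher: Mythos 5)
Your overall plan---bound the contribution of ``small clusters'' to $H(\nu; s_n \mid s_{n-N})$ by a definite fraction of the total using an entropy-growth comparison between scales, and then extract Bernoulli mass from the remaining, well-separated pairs---is the right intuition, and it does reflect the high-level shape of the paper's argument (which adapts Varj\'u's one-dimensional Proposition~21). But, as you suspect in your closing sentence, the second paragraph is where the gap is, and the paper closes it with a device that is absent from your outline.

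The missing idea is to run the decomposition in reverse order. Instead of first classifying the atoms into sub-$\epsilon$ clusters and a complementary part, and then trying to extract Bernoulli pieces from the latter, the paper rescales $\nu$ by $s_{n+2N}^{-1}$ and extracts a \emph{maximal} family of Bernoulli pieces at the required scale range $[1/6,R]$ \emph{first}; only then does it look at the remainder $\theta$. Maximality forces a clean dichotomy on $\mathrm{supp}(\theta)$: any two points of the support are either at distance $<1/6$ or at distance $>R$. Hence $\mathrm{supp}(\theta)$ is covered by balls of diameter $<1/3$ that are pairwise separated by $>R$, and that spatial separation is precisely what makes the entropy accounting additive---this is Lemma~\ref{lem:ent of theta =00003D sum of ent of rest}. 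Lemma~\ref{lem:ent >=00003D 2 ent} (the $d$-dimensional average-entropy computation you gesture at) can then be applied ball by ball to produce the factor-$2$ growth, Lemma~\ref{lem:ent of nu =00003D theta + eta} passes from $\theta$ back to $\nu$ with a controlled $\delta\log\delta^{-1}$ error, and the hypothesis then yields $\delta\log\delta^{-1}\ge\tfrac{1}{16}H(\nu;s_n\mid s_{n-N})$ directly, from which the claimed lower bound on $\delta=1-\|\theta\|$ follows by a short contradiction. In your version the sub-$\epsilon$ clusters and the complementary atoms inside a single coarse cell $J$ have \emph{no} controlled separation: a ``cluster'' can sit arbitrarily close to a ``non-cluster'' atom, so the additive split $q\approx(\text{cluster part})+(\text{non-cluster part})$ fails, as does the lower bound on the cluster part's contribution to $H(\nu;s_{n+N}\mid s_n)$---conditional entropy does not split across spatially interleaved pieces. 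There is also friction in freezing one translate of the $s_{n-N}$-grid: the hypothesis is about \emph{average} entropy, and the translate that is favorable for the $(s_n,s_{n-N})$ comparison need not be favorable for the $(s_{n+N},s_n)$ one, so the two sides of your inequality are not evaluated on the same footing.

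Once the maximality-first decomposition is in place the rest simplifies: the non-cluster extraction step and the max-flow matching disappear (the Bernoulli pieces have already been extracted), and the per-cell Jensen inequality is not needed (the paper handles $t\mapsto t/\log t^{-1}$ globally in a single contradiction step, not cell by cell). The $d$-dimensional cluster-geometry issue you flag is real but it is localized entirely inside Lemma~\ref{lem:ent >=00003D 2 ent}, where it is settled by an exact finite expansion of the average entropy of a measure supported in a small cube, after which the gap $N$ between successive scales is taken large enough to guarantee the factor $2$.
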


The proof of the proposition requires some preparations.
\begin{lem}
\label{lem:ent >=00003D 2 ent}For all $N\ge N(\lambda)\ge1$, $\theta\in\mathcal{M}(\mathbb{R}^{d})$
with $\mathrm{diam}(\mathrm{supp}(\theta))\le1/3$, and $n\ge N$,
we have
\[
H\left(\theta;\frac{s_{n+N}}{s_{n+2N}}\mid\frac{s_{n}}{s_{n+2N}}\right)\ge2H\left(\theta;\frac{s_{n}}{s_{n+2N}}\mid\frac{s_{n-N}}{s_{n+2N}}\right).
\]
\end{lem}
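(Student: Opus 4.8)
The plan is to reduce the statement to an exact formula for the average entropy of a small-diameter measure evaluated at scales much coarser than that diameter, and then to a scale-by-scale comparison of the resulting expression.

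Since $H(\cdot\,;r)$ is translation invariant by \eqref{eq:avg ent is trans inv}, I may assume $\mathrm{supp}(\theta)\subseteq\prod_{j=1}^{d}[0,\rho_{j}]$ with $\rho_{j}:=\mathrm{diam}\bigl(\pi_{j}\mathrm{supp}(\theta)\bigr)\le\mathrm{diam}\bigl(\mathrm{supp}(\theta)\bigr)\le1/3$. Unwinding the definition \eqref{eq:def of avg ent} via the substitution $y_{j}=r_{j}(k_{j}+1-x_{j})$ in each coordinate gives, for every $r\in\mathbb{R}_{>0}^{d}$,
\[
H(\theta;r)=\frac{1}{\det r}\int_{\mathbb{R}^{d}}F\bigl(\theta(\textstyle\prod_{j=1}^{d}[y_{j}-r_{j},y_{j}])\bigr)\,dy ,\qquad F(t)=-t\log t .
\]
(Here half-open versus closed intervals is immaterial, since $\theta$ charges only countably many coordinate hyperplanes.)

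Next I would analyse this integral when $r_{j}>\rho_{j}$ for every $j$. In each coordinate the interval $[y_{j}-r_{j},y_{j}]$ is disjoint from $[0,\rho_{j}]$, contains it, or meets it in a subinterval touching exactly one of the endpoints $0,\rho_{j}$; the integrand vanishes unless no coordinate is of the first type and the set $S\subseteq[d]$ of coordinates of the third type is nonempty, and on that region $\theta(\prod_{j}[y_{j}-r_{j},y_{j}])$ depends only on the $|S|$ endpoint parameters, each ranging over an interval of length $\rho_{j}$ that does not depend on $r$. Splitting the integral over the $2^{d}-1$ possibilities for $S$ yields
\[
H(\theta;r)=\sum_{\emptyset\ne S\subseteq[d]}A_{S}\prod_{j\in S}r_{j}^{-1}\prod_{j\notin S}\bigl(1-\rho_{j}/r_{j}\bigr),
\]
where each $A_{S}=A_{S}(\theta)\ge0$ is independent of $r$ (only $r_{j}>\rho_{j}$ is used).

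Finally, put $r^{(3)}:=s_{n+N}/s_{n+2N}$ and $r^{(2)}:=s_{n}/s_{n+2N}$, so the left side of the lemma equals $H(\theta;r^{(3)})-H(\theta;r^{(2)})$. By \eqref{eq:prop of sequences}, for every $j$ one has $r_{j}^{(2)}/r_{j}^{(3)}=s_{n,j}/s_{n+N,j}>\tfrac{1}{2}\lambda_{j}^{-N}\ge\tfrac{1}{2}\lambda_{1}^{-N}$ and $\rho_{j}/r_{j}^{(3)}\le\tfrac{2}{3}\lambda_{1}^{N}$, and in particular $r_{j}^{(3)},r_{j}^{(2)}>\rho_{j}$ for all $j$ once $N\ge N(\lambda)$, so the displayed formula applies. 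Comparing it at $r^{(3)}$ and $r^{(2)}$ summand by summand (all $A_{S}\ge0$ and all $|S|\ge1$) gives $H(\theta;r^{(3)})\ge\bigl(1-\tfrac{2d}{3}\lambda_{1}^{N}\bigr)\tfrac{1}{2}\lambda_{1}^{-N}\cdot H(\theta;r^{(2)})$, whose prefactor is $\ge3$ for $N\ge N(\lambda)$. Since $H(\theta;\cdot)\ge0$ by \eqref{eq:def of avg ent}, this yields
\[
H\bigl(\theta;\tfrac{s_{n+N}}{s_{n+2N}}\bigm|\tfrac{s_{n}}{s_{n+2N}}\bigr)=H(\theta;r^{(3)})-H(\theta;r^{(2)})\ge2H(\theta;r^{(2)})\ge2\bigl(H(\theta;r^{(2)})-H(\theta;\tfrac{s_{n-N}}{s_{n+2N}})\bigr),
\]
which is the assertion. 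The genuinely delicate step is the middle one: carrying out the case analysis cleanly and checking that each $A_{S}$ is a well-defined nonnegative constant, independent of $r$ throughout the range $r_{j}>\rho_{j}$; everything else is bookkeeping with \eqref{eq:prop of sequences}.
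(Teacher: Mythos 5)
Your proof is correct and takes essentially the same route as the paper's: both derive an exact formula expressing $H(\theta;r)$ as a nonnegative combination of monomials $\prod_{j\in S}r_{j}^{-1}$ over nonempty $S\subseteq[d]$, multiplied by correction factors close to $1$, and then compare this formula at the three relevant scales via $s_{n,j}\asymp\lambda_{j}^{n}$; the paper's version of the formula, (\ref{eq:H(theta,D)=00003D sum C_v}), is obtained after normalizing the support into $(-1,0)^{d}$ and using the cutoff $1$, whereas you use the sliding-window integral with cutoff $\rho_{j}$, a cosmetic variant of the same decomposition. One small expositional point: for $j\in S$ the window $[y_{j}-r_{j},y_{j}]$ can touch either the left or the right endpoint of $[0,\rho_{j}]$, and the measure of the intersection depends on which, so $A_{S}$ should be read as the sum of the corresponding $2^{|S|}$ sub-integrals; each is a nonnegative $r$-independent constant, so this does not affect the formula, and the rest of your argument goes through.
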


\begin{proof}
Let $\theta\in\mathcal{M}(\mathbb{R}^{d})$ be with $\mathrm{diam}(\mathrm{supp}(\theta))\le1/3$.
By (\ref{eq:avg ent is trans inv}) we may assume that $\mathrm{supp}(\theta)\subset(-1,0)^{d}$.
Let $X=(X_{1},...,X_{d})$ be a random vector with distribution $\theta$.
For $1\le j\le d$ and $t\ge0$ define the events,
\[
E_{j,t}^{-}:=\left\{ X_{j}+t<0\right\} \text{ and }E_{j,t}^{+}:=\left\{ X_{j}+t\ge0\right\} .
\]

Let $(r_{1},...,r_{d})=r\in\mathbb{R}_{>0}^{d}$ be with $r_{j}>1$
for $1\le j\le d$. Given $1\le j\le d$ and $0\le t\le r_{j}$ it
follows from $\mathrm{supp}(\theta)\subset(-1,0)^{d}$ that almost
surely $\left\lfloor (X_{j}+t)/r_{j}\right\rfloor =-1$ on $E_{j,t}^{-}$
and $\left\lfloor (X_{j}+t)/r_{j}\right\rfloor =0$ on $E_{j,t}^{+}$.
Thus,
\begin{eqnarray}
H(\theta;r) & = & \frac{1}{\det r}\int_{\times_{j=1}^{d}[0,r_{j})}H\left(\left\lfloor (X+y)/r\right\rfloor \right)\:dy\nonumber \\
 & = & \frac{1}{\det r}\int_{\times_{j=1}^{d}[0,r_{j})}\sum_{u\in\{-,+\}^{d}}F\left(\mathbb{P}\left(\cap_{j=1}^{d}E_{j,y_{j}}^{u_{j}}\right)\right)\:dy,\label{eq:H(theta,D)=00003Dint F}
\end{eqnarray}
where recall that $F(t)=-t\log t$.

For $1\le j\le d$ set $I_{0}^{j}:=[0,1)$ and $I_{1}^{j}:=[1,r_{j})$.
Note that,
\begin{equation}
\mathbb{P}\left(E_{j,t}^{+}\right)=1\text{ for }1\le j\le d\text{ and }t\in I_{1}^{j}.\label{eq:P=00003D1 for t in I_1}
\end{equation}
Write $\overline{\mathbf{1}}$ in place of $(1,...,1)\in\{0,1\}^{d}$.
For $(v_{1},...,v_{d})=v\in\{0,1\}^{d}\setminus\{\overline{\mathbf{1}}\}$
set $l_{v}:=\#\left\{ 1\le j\le d\::\:v_{j}=0\right\} $, let $1\le j_{v,1}<...<j_{v,l_{v}}\le d$
be with $v_{j_{v,k}}=0$ for $1\le k\le l_{v}$, set $J_{v}:=[d]\setminus\{j_{v,1},...,j_{v,l_{v}}\}$,
and write
\[
C_{v}:=\int_{[0,1)^{l_{v}}}\sum_{u\in\{-,+\}^{l_{v}}}F\left(\mathbb{P}\left(\cap_{k=1}^{l_{v}}E_{j_{v,k},y_{k}}^{u_{k}}\right)\right)\:dy_{1}...dy_{l_{v}}.
\]
Note that $C_{v}$ does not depend on $r$. By (\ref{eq:P=00003D1 for t in I_1})
\[
\frac{1}{\det r}\int_{\times_{j=1}^{d}I_{v_{j}}^{j}}\sum_{u\in\{-,+\}^{d}}F\left(\mathbb{P}\left(\cap_{j=1}^{d}E_{j,y_{j}}^{u_{j}}\right)\right)\:dy=C_{v}\prod_{1\le k\le l_{v}}r_{j_{v,k}}^{-1}\prod_{j\in J_{v}}(1-r_{j}^{-1}),
\]
and
\[
\frac{1}{\det r}\int_{\times_{j=1}^{d}I_{1}^{j}}\sum_{u\in\{-,+\}^{d}}F\left(\mathbb{P}\left(\cap_{j=1}^{d}E_{j,y_{j}}^{u_{j}}\right)\right)\:dy=0.
\]
Thus, from (\ref{eq:H(theta,D)=00003Dint F})
\begin{equation}
H(\theta;r)=\sum_{v\in\{0,1\}^{d}\setminus\{\overline{\mathbf{1}}\}}C_{v}\prod_{1\le k\le l_{v}}r_{j_{v,k}}^{-1}\prod_{j\in J_{v}}(1-r_{j}^{-1}).\label{eq:H(theta,D)=00003D sum C_v}
\end{equation}

Recall that by (\ref{eq:prop of sequences}) we have $\lambda_{j}^{n}\le s_{n,j}<2\lambda_{j}^{n}$
for $1\le j\le d$ and $n\ge0$. From this and since (\ref{eq:H(theta,D)=00003D sum C_v})
holds for all $(r_{1},...,r_{d})=r\in\mathbb{R}_{>0}^{d}$ with $r_{j}>1$
for $1\le j\le d$, it follows that for all sufficiently large $N\ge1$
and all $n\ge0$,
\[
H\left(\theta;\frac{s_{n}}{s_{n+N}}\right)=\Theta\left(\sum_{v\in\{0,1\}^{d}\setminus\{\overline{\mathbf{1}}\}}C_{v}\prod_{1\le k\le l_{v}}\lambda_{j_{v,k}}^{N}\right).
\]
Now the lemma follows directly by choosing $N$ to be sufficiently
large.
\end{proof}
The proof of the following lemma is similar to the proof of \cite[Lemma 25]{varju2019absolute}
and is therefore omitted. Recall that we write $|r|$ for the Euclidean
norm of $r\in\mathbb{R}_{>0}^{d}$.
\begin{lem}
\label{lem:ent of theta =00003D sum of ent of rest}Let $r,r'\in\mathbb{R}_{>0}^{d}$
and let $B_{1},...,B_{k}\subset\mathbb{R}^{d}$ be closed balls such
that $\mathrm{dist}(B_{i_{1}},B_{i_{2}})>|r|,|r'|$ for all $1\le i_{1}<i_{2}\le k$.
Then $H\left(\theta;r\mid r'\right)=\sum_{i=1}^{k}H\left(\theta|_{B_{i}};r\mid r'\right)$
for all $\theta\in\mathcal{M}\left(\cup_{i=1}^{k}B_{i}\right)$.
\end{lem}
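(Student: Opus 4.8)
The plan is to exploit the separation hypothesis to show that, for every $x\in[0,1)^{d}$, the value of the discrete random variable $\left\lfloor X/r+x\right\rfloor$ (where $X$ has distribution $\theta$) determines which of the balls $B_{i}$ contains $X$; once this is established, the identity will follow from the chain rule for Shannon entropy together with the observation that the entropy of the ``ball index'' is independent of the scale and therefore cancels in the conditional quantity $H(\cdot\,;r\mid r')$.

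The first step is a geometric observation. Fix $x\in[0,1)^{d}$; the atoms of the partition of $\mathbb{R}^{d}$ underlying $\left\lfloor\,\cdot/r+x\right\rfloor$ are boxes of the form $\prod_{j=1}^{d}[(\ell_{j}-x_{j})r_{j},(\ell_{j}-x_{j}+1)r_{j})$ with $\ell\in\mathbb{Z}^{d}$, and any two points of such a box differ by strictly less than $r_{j}$ in the $j$-th coordinate, hence by strictly less than $|r|=(\sum_{j}r_{j}^{2})^{1/2}$ in Euclidean distance. So if one of these boxes met two distinct balls $B_{i_{1}},B_{i_{2}}$, it would produce $p\in B_{i_{1}}$ and $q\in B_{i_{2}}$ with $|p-q|<|r|$, contradicting $\mathrm{dist}(B_{i_{1}},B_{i_{2}})>|r|$. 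Hence each atom of the scale-$r$ partition meets at most one ball, and the same reasoning applies verbatim to the scale-$r'$ partition. (Since $|r|>0$, the hypothesis also forces the $B_{i}$ to be pairwise disjoint, so the index of the ball containing $X$ is well defined for $\theta$-a.e.\ point.)

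The second step is a routine entropy computation. After discarding the balls of zero $\theta$-mass I would set $c_{i}:=\theta(B_{i})>0$, $\theta_{i}:=c_{i}^{-1}\theta|_{B_{i}}$, and let $W$ be the ($\theta$-a.s.\ defined) index with $X\in B_{W}$, so that $\mathbb{P}(W=i)=c_{i}$ and $\theta_{i}$ is the conditional law of $X$ given $W=i$. By the geometric observation, $W$ is a deterministic function of $\left\lfloor X/r+x\right\rfloor$ for each fixed $x$, so the chain rule gives $H(\left\lfloor X/r+x\right\rfloor)=H(W)+\sum_{i}c_{i}\,H(\left\lfloor X/r+x\right\rfloor\mid W=i)$. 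Integrating over $x\in[0,1)^{d}$ and unwinding the definition of average entropy, together with the convention $H(\theta|_{B_{i}};r)=c_{i}H(\theta_{i};r)$, yields $H(\theta;r)=H(W)+\sum_{i}H(\theta|_{B_{i}};r)$, and likewise $H(\theta;r')=H(W)+\sum_{i}H(\theta|_{B_{i}};r')$. Subtracting the two identities, the $H(W)$ terms cancel and one is left with $H(\theta;r\mid r')=\sum_{i}H(\theta|_{B_{i}};r\mid r')$.

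The argument is short enough that there is no genuine obstacle; the single point requiring care is the geometric observation, where it matters that the half-open boxes of the scale-$r$ partition have Euclidean diameter at most $|r|$ — so the hypothesis $\mathrm{dist}(B_{i_{1}},B_{i_{2}})>|r|$ is precisely what is needed to confine each atom to a single ball — and that the separation exceeds both $|r|$ and $|r'|$, so that the cancellation of the scale-independent term $H(W)$ is legitimate for the conditional entropy.
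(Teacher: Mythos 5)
Your proof is correct and is essentially the argument the paper has in mind: the paper omits its own proof, pointing to the one-dimensional analogue in Varj\'u's work, and the natural extension of that argument is exactly what you give — each atom of the scale-$r$ (or scale-$r'$) grid has Euclidean diameter less than $|r|$ (resp.\ $|r'|$), so the grid refines the ball partition, and the chain rule makes the ball-index entropy $H(W)$ cancel in the conditional quantity. One small point worth making explicit when writing this up is that balls with $\theta(B_i)=0$ contribute zero to both sides (under the paper's convention $H(\theta|_{B_i};r\mid r')=c_iH(\theta_i;r\mid r')$ with $c_i=\theta(B_i)$), which justifies discarding them at the outset.
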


The proof of the following lemma is similar to the proof of \cite[Lemma 26]{varju2019absolute}
and is therefore omitted.
\begin{lem}
\label{lem:ent of nu =00003D theta + eta}Let $\nu\in\mathcal{M}(\mathbb{R}^{d})$
and let $\theta,\zeta$ be nonnegative Borel measures with $\nu=\theta+\zeta$
and $\Vert\zeta\Vert<1/2$. Then for all $r\in\mathbb{R}_{>0}^{d}$
and $N\in\mathbb{Z}_{>0}^{d}$,
\[
H\left(\theta;r\mid Nr\right)\le H\left(\nu;r\mid Nr\right)\le H\left(\theta;r\mid Nr\right)+2\Vert\zeta\Vert\log\left(\Vert\zeta\Vert^{-1}\det N\right).
\]
\end{lem}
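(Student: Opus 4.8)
The plan is to follow the one-dimensional argument of \cite[Lemma 26]{varju2019absolute}, the only extra input being an ``almost concavity'' property of the average entropy $H(\cdot\,;r)$ in $\mathbb{R}^{d}$, which is obtained exactly as in one dimension by conditioning. Write $b:=\Vert\zeta\Vert$, so that $\Vert\theta\Vert=1-b$ and $0\le b<1/2$ by hypothesis. If $b=0$ then $\zeta=0$, $\theta=\nu$, and both inequalities are trivial; so assume $0<b<1/2$, and set $\bar{\theta}:=(1-b)^{-1}\theta$ and $\bar{\zeta}:=b^{-1}\zeta$, which lie in $\mathcal{M}(\mathbb{R}^{d})$ since $\theta$ and $\zeta$ are dominated by the compactly supported measure $\nu$. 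By our convention for non-probability measures, $H(\theta;r\mid Nr)=(1-b)H(\bar{\theta};r\mid Nr)$ and $H(\zeta;r\mid Nr)=bH(\bar{\zeta};r\mid Nr)$.

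The lower bound is immediate: since $Nr/r=N\in\mathbb{Z}_{>0}^{d}$, Lemma \ref{lem:ent of sum of pos meas >=00003D} gives
\[
H(\nu;r\mid Nr)=H(\theta+\zeta;r\mid Nr)\ge H(\theta;r\mid Nr)+H(\zeta;r\mid Nr)\ge H(\theta;r\mid Nr),
\]
where the last step uses $H(\zeta;r\mid Nr)=bH(\bar{\zeta};r\mid Nr)\ge0$, which holds by (\ref{eq:ub on cond avg ent int ratio}) applied to $\bar{\zeta}$ (equivalently, by the first inequality of Lemma \ref{lem:cond avg ent is pos =000026 ub}).

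For the upper bound, realise a random vector $X$ with distribution $\nu$ jointly with a $\{0,1\}$-valued random variable $W$ with $\mathbb{P}(W=1)=1-b$ and $\mathbb{P}(W=0)=b$, in such a way that the conditional law of $X$ is $\bar{\theta}$ on $\{W=1\}$ and $\bar{\zeta}$ on $\{W=0\}$. For each fixed $x\in[0,1)^{d}$, apply to the discrete random variable $U:=\lfloor X/r+x\rfloor$ the standard inequalities $H(U\mid W)\le H(U)\le H(U,W)=H(W)+H(U\mid W)$ (all quantities are finite since $\nu$ is compactly supported), and integrate over $x\in[0,1)^{d}$; because the push-forward of $\nu$ under $x\mapsto U$ is the corresponding convex combination of the push-forwards of $\bar{\theta}$ and $\bar{\zeta}$, this yields
\[
(1-b)H(\bar{\theta};r)+bH(\bar{\zeta};r)\le H(\nu;r)\le(1-b)H(\bar{\theta};r)+bH(\bar{\zeta};r)+H(b),
\]
where $H(b):=-b\log b-(1-b)\log(1-b)$. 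Using the right-hand estimate at scale $r$ and the left-hand estimate at scale $Nr$ and subtracting,
\[
H(\nu;r\mid Nr)\le(1-b)H(\bar{\theta};r\mid Nr)+bH(\bar{\zeta};r\mid Nr)+H(b)=H(\theta;r\mid Nr)+H(\zeta;r\mid Nr)+H(b).
\]
Now $H(\zeta;r\mid Nr)=bH(\bar{\zeta};r\mid Nr)\le b\log\det N$ by (\ref{eq:ub on cond avg ent int ratio}), and an elementary one-variable estimate gives $H(b)\le2b\log(1/b)$ for $0<b\le1/2$. Since $\det N\ge1$, combining these bounds gives
\[
H(\nu;r\mid Nr)\le H(\theta;r\mid Nr)+b\log\det N+2b\log(1/b)\le H(\theta;r\mid Nr)+2b\log\big(b^{-1}\det N\big),
\]
which is the desired inequality.

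I do not expect any genuine obstacle: the only point needing a little care is the almost-concavity display, and that is routine, since conditioning on $W$ costs at most $H(W)=H(b)$ in entropy while the two pointwise inequalities integrate termwise over $[0,1)^{d}$. The entire content of the lemma is then the bookkeeping of the two small error terms $H(\zeta;r\mid Nr)$ and $H(b)$, each of which is $O\big(b\log(b^{-1}\det N)\big)$.
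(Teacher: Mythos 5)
Your proof is correct, and it is essentially the same argument the paper intends by pointing to \cite[Lemma 26]{varju2019absolute}: lower bound via the superadditivity of $H(\cdot;r\mid Nr)$ over nonnegative summands, upper bound via the mixture/conditioning identity $H(U\mid W)\le H(U)\le H(U\mid W)+H(W)$ integrated over the averaging parameter, followed by the bound $H(\zeta;r\mid Nr)\le\Vert\zeta\Vert\log\det N$ and the elementary estimate $H(b)\le 2b\log(1/b)$ for $0<b\le 1/2$. No gaps.
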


\begin{proof}[Proof of Proposition \ref{prop:measure decomp}]
Let $N\ge1$ be large with respect to $\lambda$, let $n\ge N$,
and let $\nu\in\mathcal{M}(\mathbb{R}^{d})$ be finitely supported
and with
\begin{equation}
H\left(\nu;s_{n+N}\mid s_{n}\right)\le\frac{3}{2}H\left(\nu;s_{n}\mid s_{n-N}\right).\label{eq:assump on nu}
\end{equation}

Set $R:=2\left|\lambda^{-3N}\right|$. Let $S_{s_{n+2N}}^{-1}\nu=\theta+\zeta_{1}+...+\zeta_{L}$
be a decomposition of $S_{s_{n+2N}}^{-1}\nu$ such that $\theta,\zeta_{1},...,\zeta_{L}$
are nonnegative measures, for each $1\le i\le L$ there exist $x_{i},y_{i}\in\mathbb{R}^{d}$
so that $\zeta_{i}=\frac{\Vert\zeta_{i}\Vert}{2}(\delta_{x_{i}}+\delta_{y_{i}})$
and $1/6\le|x_{i}-y_{i}|\le R$, and $\Vert\theta\Vert$ is minimal
among all such decompositions. Since $\nu$ is finitely supported,
the minimum clearly exists.

Let $0<\epsilon<1$ be small with respect to $\lambda$ and $N$,
and set $\delta:=1-\Vert\theta\Vert$. By (\ref{eq:prop of sequences})
we have $\lambda_{d}^{2N}/12\le\left|s_{n}^{-1}s_{n+2N}\left(x_{i}-y_{i}\right)\right|\le R$
for $1\le i\le L$. Thus, in order to prove the proposition it suffices
to show that
\begin{equation}
\delta\ge\epsilon\frac{H\left(\nu;s_{n}\mid s_{n-N}\right)}{\max\left\{ 1,-\log H\left(\nu;s_{n}\mid s_{n-N}\right)\right\} }.\label{eq:delta>=00003Depsilon times frac}
\end{equation}
From (\ref{eq:prop of sequences}) and (\ref{eq:ub on cond avg ent int ratio}),
\[
H\left(\nu;s_{n}\mid s_{n-N}\right)\le\det\left(s_{n-N}/s_{n}\right)\le2^{d}\lambda_{d}^{-dN}.
\]
Hence we may assume that $\delta<\epsilon^{1/2}$, otherwise (\ref{eq:delta>=00003Depsilon times frac})
holds whenever $\epsilon$ is sufficiently small with respect to $\lambda$
and $N$.

By the minimality of $\Vert\theta\Vert$, for each $x,y\in\mathrm{supp}(\theta)$
we have $|x-y|<1/6$ or $|x-y|>R$. Thus, it is easy to see that there
exist closed balls $B_{1},...,B_{k}\subset\mathbb{R}^{d}$ so that
$\mathrm{diam}(B_{i})<1/3$ for $1\le i\le k$, $\mathrm{dist}(B_{i_{1}},B_{i_{2}})>R$
for $1\le i_{1}<i_{2}\le k$, and $\mathrm{supp}(\theta)\subset\cup_{i=1}^{k}B_{i}$.
By (\ref{eq:prop of sequences}) and the choice of $R$,
\[
\left|\frac{s_{n+N}}{s_{n+2N}}\right|,\left|\frac{s_{n}}{s_{n+2N}}\right|,\left|\frac{s_{n-N}}{s_{n+2N}}\right|\le R.
\]
Thus, by Lemmata \ref{lem:ent >=00003D 2 ent} and \ref{lem:ent of theta =00003D sum of ent of rest},
\[
H\left(\theta;\frac{s_{n+N}}{s_{n+2N}}\mid\frac{s_{n}}{s_{n+2N}}\right)\ge2H\left(\theta;\frac{s_{n}}{s_{n+2N}}\mid\frac{s_{n-N}}{s_{n+2N}}\right).
\]
Together with (\ref{eq:scaling rel of avg ent}) this gives,
\begin{equation}
H\left(S_{s_{n+2N}}\theta;s_{n+N}\mid s_{n}\right)\ge2H\left(S_{s_{n+2N}}\theta;s_{n}\mid s_{n-N}\right).\label{eq:ent of push of theta}
\end{equation}

From Lemma \ref{lem:ent of nu =00003D theta + eta}, $\delta<\epsilon^{1/2}<1/2$
and $\det\left(s_{n-N}/s_{n}\right)\le2^{d}\lambda_{d}^{-dN}$, we
get
\[
H\left(\nu;s_{n+N}\mid s_{n}\right)\ge H\left(S_{s_{n+2N}}\theta;s_{n+N}\mid s_{n}\right)
\]
and
\[
H\left(S_{s_{n+2N}}\theta;s_{n}\mid s_{n-N}\right)\ge H\left(\nu;s_{n}\mid s_{n-N}\right)-2\delta\log\left(\delta^{-1}2^{d}\lambda_{d}^{-dN}\right).
\]
Together with (\ref{eq:ent of push of theta}) and $\delta<\epsilon^{1/2}<2^{-d}\lambda_{d}^{dN}$,
these inequalities yield
\[
H\left(\nu;s_{n+N}\mid s_{n}\right)\ge2H\left(\nu;s_{n}\mid s_{n-N}\right)+8\delta\log\delta.
\]
Hence, by (\ref{eq:assump on nu}),
\begin{equation}
\delta\log\delta^{-1}\ge\frac{1}{16}H\left(\nu;s_{n}\mid s_{n-N}\right).\label{eq:delta log delta^-1}
\end{equation}

Now assume by contradiction that (\ref{eq:delta>=00003Depsilon times frac})
does not hold, and set $h:=H\left(\nu;s_{n}\mid s_{n-N}\right)$.
From $\delta<\epsilon^{1/2}$ and (\ref{eq:delta log delta^-1}),
and by assuming that $\epsilon$ is sufficiently small, we get $h<1/2$.
Thus, since (\ref{eq:delta>=00003Depsilon times frac}) does not hold,
we have $\delta<\frac{\epsilon h}{-\log h}$. From this, since $t\rightarrow t\log t^{-1}$
is increasing on a right neighbourhood of $0$, and by assuming that
$\epsilon$ is sufficiently small, we obtain
\[
\delta\log\delta^{-1}<\frac{\epsilon h}{-\log h}\log\left(\epsilon^{-1}h^{-1}\log h^{-1}\right)\le h/16.
\]
But this contradicts (\ref{eq:delta log delta^-1}), which completes
the proof of the proposition.
\end{proof}

\subsection{\label{subsec:Entropy-increase-with bernoulli}Entropy increase for
convolutions with Bernoulli measures}

The following proposition is the main result of this subsection. Recall
that the notion of an $(\epsilon,m)$-non-saturated measure is defined
in Definition \ref{def:non-saturated-measure}.
\begin{prop}
\label{prop:ent inc bernoulli}For each $0<\epsilon<1$ and $M\ge1$
there exists $\delta=\delta(\lambda,\epsilon,M)>0$ such that for
all $N\ge N(\lambda,\epsilon,M)\ge1$ and $n\ge N$ the following
holds. Let $\mu\in\mathcal{M}(\mathbb{R}^{d})$ be $(\epsilon,m)$-non-saturated
for all $m\ge M$, let $x,y\in\mathbb{R}^{d}$ be with $\epsilon\le|s_{n}^{-1}(x-y)|\le\epsilon^{-1}$,
and set $\zeta:=\frac{1}{2}(\delta_{x}+\delta_{y})$. Then,
\[
H\left(\mu*\zeta;s_{n}\mid s_{n-N}\right)\ge H\left(\mu;s_{n}\mid s_{n-N}\right)+\delta.
\]
\end{prop}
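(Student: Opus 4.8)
The plan is to reduce, by means of the error-free Kaimanovich--Vershik inequality (Lemma \ref{lem:KV by submodularity}), to an entropy increase statement for the convolution of $\mu$ with a \emph{fixed} self-convolution power of $\zeta$, and then to prove that statement by a Berry--Esseen argument in the spirit of Hochman's inverse theorem in the non-conformal setting. Thus the first aim is to produce $k=k(\lambda,\epsilon,M)\ge 1$ and $\delta_0=\delta_0(\lambda,\epsilon,M)>0$ such that
\[
H\bigl(\mu*\zeta^{*k};s_n\bigr)\ge H\bigl(\mu;s_n\bigr)+\delta_0
\]
for all large $n$. Granting this, Lemma \ref{lem:KV by submodularity} gives $H(\mu*\zeta;s_n)-H(\mu;s_n)\ge\frac1k\bigl(H(\mu*\zeta^{*k};s_n)-H(\mu;s_n)\bigr)\ge\delta_0/k$. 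To convert this unconditional gain at scale $s_n$ into the conditional gain at scales $s_n\mid s_{n-N}$ demanded by the proposition, I would use that, componentwise, $|(x_j-y_j)/s_{n-N,j}|\le\epsilon^{-1}\,s_{n,j}/s_{n-N,j}\le 2\epsilon^{-1}\lambda_1^{N}$, by (\ref{eq:prop of sequences}) and the hypothesis $|s_n^{-1}(x-y)|\le\epsilon^{-1}$. Hence, by the scaling relation (\ref{eq:scaling rel of avg ent}), translation invariance (\ref{eq:avg ent is trans inv}), and Lemma \ref{lem:close rv --> close avg ent} (comparing $S_{s_{n-N}}^{-1}(\mu*\zeta)$ with $S_{s_{n-N}}^{-1}(T_y\mu)$ at scale $(1,\dots,1)$), one gets $|H(\mu*\zeta;s_{n-N})-H(\mu;s_{n-N})|<\delta_0/(2k)$ once $N\ge N(\lambda,\epsilon,M)$, with an error bound not depending on $n$. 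Subtracting, $H(\mu*\zeta;s_n\mid s_{n-N})\ge H(\mu;s_n\mid s_{n-N})+\delta_0/(2k)$, and the proposition follows with $\delta:=\delta_0/(2k)$.

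For the increase with $\zeta^{*k}$, set $w:=x-y$ and $v:=s_n^{-1}w$, so $\epsilon\le|v|\le\epsilon^{-1}$, and observe that $\zeta^{*k}=\sum_{j=0}^{k}\binom{k}{j}2^{-k}\,\delta_{ky+jw}$ is a binomial measure on an arithmetic progression with common difference $w$. By the Berry--Esseen theorem, $S_{s_n}^{-1}\zeta^{*k}$, after recentering and rescaling by $k^{-1/2}$, lies within $O(k^{-1/2})$ of the Gaussian concentrated on the line $\mathbb{R}v$ with standard deviation $\tfrac12\sqrt{k}\,|v|$; choosing $j_0\in[d]$ with $|v_{j_0}|\ge\epsilon/\sqrt d$, this Gaussian has standard deviation $\gtrsim\sqrt{k}\,\epsilon$ in the principal direction $e_{j_0}$. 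Convolving $\mu$, viewed at scale $s_n$, with a measure that spreads mass over a range of order $\sqrt k$ along $v$ must raise the average entropy by roughly $\tfrac12\log k$, \emph{unless} $\mu$ is already essentially saturated along $e_{j_0}$ over a block of about $\log k$ consecutive scales ending at level $n$. Making this dichotomy quantitative is exactly a non-conformal, average-entropy instance of Hochman's inverse theorem, and here I would run the Berry--Esseen-based argument of \cite{Rap_SA_diag}, adapted to average entropy: if $H(\mu*\zeta^{*k};s_n)<H(\mu;s_n)+\delta_0$, then on all but an $\epsilon'$-fraction of the scales in a range of length $\asymp\log k$ ending at $n$, the conditional entropy of the relevant components of $\mu$ in direction $e_{j_0}$ is within $\epsilon'$ of full. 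Fixing $\epsilon'$ small relative to $\epsilon$ and $\lambda$, and then taking $k$ large enough that this range has length $\ge M$, one obtains an average direction-$e_{j_0}$ entropy gain of $\mu$ exceeding $\chi_{j_0}-\epsilon$ over a single block of at least $M$ scales, contradicting the $(\epsilon,m)$-non-saturation of $\mu$ (Definition \ref{def:non-saturated-measure}). This proves the displayed inequality for $\zeta^{*k}$.

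The main obstacle is the middle step. In the non-conformal regime the entropy carried by $\zeta^{*k}$ sits along the oblique direction $v$ rather than along a coordinate axis, so Varjú's one-dimensional argument for the analogous step in \cite{varju2019absolute} does not apply, and one is forced through the Berry--Esseen-plus-inverse-theorem circle of ideas of \cite{Rap_SA_diag} in order to turn this into saturation of $\mu$ along the principal direction $e_{j_0}$. Carrying this out entirely in terms of \emph{average} entropy is indispensable: the target gain $\delta_0$ is a small fixed constant, and it would be swamped by the $O(1)$ errors one incurs by passing to the partitions $\mathcal{E}_n$. For the same reason, the descent from $\zeta^{*k}$ back to $\zeta$ genuinely requires the error-free form of the Kaimanovich--Vershik lemma.
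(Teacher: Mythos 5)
Your two flanking steps are exactly the paper's: the error-free Kaimanovich--Vershik inequality (Lemma \ref{lem:KV by submodularity}) to descend from $\zeta^{*k}$ to $\zeta$, and Lemma \ref{lem:close rv --> close avg ent} to turn the unconditional gain at scale $s_n$ into a conditional gain at $s_n\mid s_{n-N}$ after enlarging $N$. The gap is in your middle step, and it is a real one.

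You propose to prove the entropy increase for $\mu*\zeta^{*k}$ by an \emph{inverse-theorem dichotomy}: if the increase is small, then $\mu$ is nearly saturated along $e_{j_0}$ over a block of scales, contradicting Definition \ref{def:non-saturated-measure}. You say you would obtain this by ``running the Berry--Esseen-based argument of \cite{Rap_SA_diag}, adapted to average entropy,'' but no such inverse theorem is proved in \cite{Rap_SA_diag} or anywhere in this paper, and adapting Hochman-type inverse theorems to average (rather than dyadic/non-conformal) entropy in the non-conformal multi-scale setting is itself a substantial project. Moreover, inverse theorems of this flavour typically yield component-level saturation on a positive fraction of scales, not global saturation of $\mu$ in the sense of Definition \ref{def:non-saturated-measure}, so even with such a theorem in hand the contradiction would not be immediate.

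The paper avoids the dichotomy entirely with a direct argument. Corollary \ref{cor:exists j s.t. saturated} (via the Berry--Esseen Lemma \ref{lem:ent of slices of O(1) measures} from \cite{Rap_SA_diag}) produces $j$, a scale $n'=n+l_j-a$, and $m_j\ge M$ such that $\zeta^{*k_j}$ has conditional entropy $>(\chi_j-\epsilon/3)m_j$ with respect to $\mathcal{E}_{n'+m_j}$ given $\mathcal{C}:=\mathcal{E}_{n'}\vee\pi_{[d]\setminus\{j\}}^{-1}\mathcal{E}_{n'+m_j}$. Writing $\mu*\zeta^{*k_j}=\int T_x\zeta^{*k_j}\,d\mu(x)$, concavity of conditional entropy together with the translation-commensurability (\ref{eq:commens part by T_x}) gives
\[
H\bigl(\mu*\zeta^{*k_j},\mathcal{E}_{n'+m_j}\mid\mathcal{C}\bigr)\ge H\bigl(\zeta^{*k_j},\mathcal{E}_{n'+m_j}\mid\mathcal{C}\bigr)-O(1)>(\chi_j-\epsilon/3)m_j-O(1),
\]
whereas the $(\epsilon,m_j)$-non-saturation of $\mu$ says $H(\mu,\mathcal{E}_{n'+m_j}\mid\mathcal{C})<(\chi_j-\epsilon)m_j$. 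Subtracting already gives a gain of $\ge(\epsilon/2)m_j$, and a small additional bookkeeping (concavity applied to $\pi_{[d]\setminus\{j\}}^{-1}\mathcal{E}_{n'+m_j}\mid\mathcal{E}_{n'}$, then the conditional entropy chain rule and passage to scale $s_n$ via (\ref{eq:two defs of ent are equiv})) yields $H(\mu*\zeta^{*k_j};s_n)\ge H(\mu;s_n)+(m_j\epsilon)/5$. This is the input you wanted for Kaimanovich--Vershik, obtained without any inverse-theorem machinery: the point is that the non-saturation hypothesis is compared not against $\mu*\zeta^{*k_j}$ post hoc but against the directly computable conditional entropy of $\zeta^{*k_j}$, and concavity does the rest.

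So the overall architecture of your proposal matches, but your middle step replaces a short concavity comparison with a conjectured inverse theorem you do not supply. Filling that gap the way you suggest would be significantly harder than the statement being proved.
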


\subsubsection{Entropy of repeated self-convolutions of Bernoulli measures}

For $1\le j\le d$ and $\sigma\in\mathcal{M}(e_{j}\mathbb{R})$, we
write $\mathrm{Var}(\sigma)$ to denote the variance of $\sigma'$,
where $\sigma'\in\mathcal{M}(\mathbb{R})$ is the push-forward of
$\sigma$ via the map sending $te_{j}$ to $t$. Recall that in Section
\ref{subsec:Basic-notations} we describe how we use the relation
$\ll$. The following lemma, whose proof relies on the Berry-Esseen
theorem, is established in \cite[Lemma 3.2]{Rap_SA_diag}.
\begin{lem}
\label{lem:ent of slices of O(1) measures}Let $0<\epsilon<1$ and
$m,l,k\in\mathbb{Z}_{>0}$ be with $\epsilon^{-1}\ll m\ll l\ll k$,
and let $\theta_{1},...,\theta_{k}\in\mathcal{M}(\mathbb{R}^{d})$
be with $\mathrm{diam}(\mathrm{supp}(\theta_{i}))\le\epsilon^{-1}$
for $1\le i\le k$. Set $\sigma:=\theta_{1}*...*\theta_{k}$, and
suppose that there exists $1\le j\le d$ so that $\mathrm{Var}(\pi_{j}\sigma)\ge\epsilon k$
and $\mathrm{Var}(\pi_{j'}\sigma)\le\epsilon^{-1}$ for $1\le j'<j$.
Then for $a:=\left\lfloor \frac{1}{2\chi_{j}}\log k\right\rfloor $,
\[
\frac{1}{m}H\left(\sigma,\mathcal{E}_{l-a+m}\mid\mathcal{E}_{l-a}\vee\pi_{[d]\setminus\{j\}}^{-1}\mathcal{E}_{l-a+m}\right)>\chi_{j}-\epsilon.
\]
\end{lem}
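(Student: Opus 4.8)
\emph{Overview and reduction.} The plan is to separate the ``active'' direction $j$ from the rest: I will show that conditioning on $\pi_{[d]\setminus\{j\}}^{-1}\mathcal{E}_{l-a+m}$ costs only $O(1)$ bits, reducing the claim to a one-dimensional statement about $\pi_{j}\sigma$, which follows from the Berry--Esseen theorem. Below, $O(1)$-constants may depend on $d$ and $\epsilon$ but not on $m,l,k$, and ``for $k$ large'' means large depending only on $d,\epsilon,m,l$, which is permissible since $\epsilon^{-1}\ll m\ll l\ll k$. Put $\mathcal{A}=\pi_{j}^{-1}\mathcal{D}^{\mathbb{R}}_{\chi_{j}(l-a+m)}$, $\mathcal{B}=\pi_{j}^{-1}\mathcal{D}^{\mathbb{R}}_{\chi_{j}(l-a)}$ and $\mathcal{C}=\pi_{[d]\setminus\{j\}}^{-1}\mathcal{E}_{l-a+m}$. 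Since $\mathcal{E}_{l-a+m}=\mathcal{A}\vee\mathcal{C}$, $\mathcal{E}_{l-a}\vee\mathcal{C}=\mathcal{B}\vee\mathcal{C}$, and $\mathcal{A}$ refines $\mathcal{B}$, the entropy in the lemma equals $H(\sigma,\mathcal{A}\mid\mathcal{B}\vee\mathcal{C})$. By the chain rule $H(\sigma,\mathcal{A}\mid\mathcal{B}\vee\mathcal{C})=H(\sigma,\mathcal{A}\mid\mathcal{B})+H(\sigma,\mathcal{C}\mid\mathcal{A}\vee\mathcal{B})-H(\sigma,\mathcal{C}\mid\mathcal{B})\ge H(\sigma,\mathcal{A}\mid\mathcal{B})-H(\sigma,\mathcal{C})$, and since $\mathcal{A},\mathcal{B}$ are $\pi_{j}$-pullbacks while $\mathcal{C}$ is a $\pi_{[d]\setminus\{j\}}$-pullback,
\[
H\bigl(\sigma,\mathcal{E}_{l-a+m}\mid\mathcal{E}_{l-a}\vee\pi_{[d]\setminus\{j\}}^{-1}\mathcal{E}_{l-a+m}\bigr)\ge H\bigl(\pi_{j}\sigma,\mathcal{D}^{\mathbb{R}}_{\chi_{j}(l-a+m)}\mid\mathcal{D}^{\mathbb{R}}_{\chi_{j}(l-a)}\bigr)-H\bigl(\pi_{[d]\setminus\{j\}}\sigma,\mathcal{E}_{l-a+m}\bigr).
\]
Hence it suffices to prove (i) $H(\pi_{[d]\setminus\{j\}}\sigma,\mathcal{E}_{l-a+m})=O(1)$ and (ii) $H(\pi_{j}\sigma,\mathcal{D}^{\mathbb{R}}_{\chi_{j}(l-a+m)}\mid\mathcal{D}^{\mathbb{R}}_{\chi_{j}(l-a)})\ge\chi_{j}m-O(1)$; dividing by $m$ and using $m\gg\epsilon^{-1}$ then yields the lemma.

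\emph{The auxiliary directions (i).} For $j'\ne j$, each $\pi_{j'}\theta_{i}$ lives on an interval of length $\le\epsilon^{-1}$, so $V_{j'}:=\mathrm{Var}(\pi_{j'}\sigma)=\sum_{i}\mathrm{Var}(\pi_{j'}\theta_{i})\le k\epsilon^{-2}/4$, and by hypothesis $V_{j'}\le\epsilon^{-1}$ for $j'<j$. The value $a=\lfloor\frac{1}{2\chi_{j}}\log k\rfloor$ is chosen precisely so that a cell of $\mathcal{E}_{l-a+m}$ has $j'$-side length $\lambda_{j'}^{l-a+m}\asymp 2^{-\chi_{j'}(l+m)}k^{\chi_{j'}/(2\chi_{j})}$ exceeding $\sqrt{V_{j'}}$ by a positive power of $k$, for every $j'\ne j$: for $j'<j$ because $V_{j'}$ is bounded, and for $j'>j$ because $\chi_{j'}>\chi_{j}$ forces $k^{\chi_{j'}/(2\chi_{j})}\gg\sqrt{k}$. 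By Chebyshev, $\pi_{j'}\sigma$ then puts mass $\ge 1-\delta_{j'}$ on at most three consecutive cells of $\mathcal{D}^{\mathbb{R}}_{\chi_{j'}(l-a+m)}$, with $\delta_{j'}\asymp V_{j'}/\lambda_{j'}^{2(l-a+m)}$, and the calibration gives $\delta_{j'}\log k\to 0$ as $k\to\infty$. Thus $\pi_{[d]\setminus\{j\}}\sigma=(1-\delta)\nu_{\mathrm{main}}+\delta\nu_{\mathrm{tail}}$ with $\delta=\sum_{j'\ne j}\delta_{j'}\to 0$, $\nu_{\mathrm{main}}$ supported on at most $3^{d-1}$ cells of $\mathcal{E}_{l-a+m}$, while $\nu_{\mathrm{tail}}$ charges at most $k^{O(d)}$ cells since $\mathrm{supp}(\sigma)$ has diameter $\le k\epsilon^{-1}$. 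The standard entropy bound for a mixture gives $H(\pi_{[d]\setminus\{j\}}\sigma,\mathcal{E}_{l-a+m})\le(d-1)\log 3+\delta\cdot O(d\log k)+O(1)=O(1)$ for $k$ large.

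\emph{The active direction (ii).} By the one-dimensional case of (\ref{eq:two defs of ent are equiv}), (ii) is equivalent to $H(\pi_{j}\sigma;\rho_{2}\mid\rho_{1})\ge\chi_{j}m-O(1)$, where $\rho_{1}=\lambda_{j}^{l-a}$, $\rho_{2}=\lambda_{j}^{l-a+m}=\rho_{1}\lambda_{j}^{m}$, so $\log(\rho_{1}/\rho_{2})=\chi_{j}m$; note $\rho_{1}\asymp 2^{-\chi_{j}l}\sqrt{k}$ and $V_{j}:=\mathrm{Var}(\pi_{j}\sigma)\ge\epsilon k$, whence $\rho_{1}^{2}\ll V_{j}$ and $\epsilon^{-1}/\rho_{2}\asymp\epsilon^{-1}2^{\chi_{j}(l+m)}k^{-1/2}\to 0$ for $k$ large. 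By Lemma \ref{lem:avg ent as dif ent}, $H(\pi_{j}\sigma;\rho_{i})=H(\pi_{j}\sigma+U_{\rho_{i}})-\log\rho_{i}$ with $U_{\rho}$ uniform on $[0,\rho]$ independent of $\pi_{j}\sigma$, so it is enough to show $H(\pi_{j}\sigma+U_{\rho_{2}})\ge\frac{1}{2}\log V_{j}-O(1)$ and $H(\pi_{j}\sigma+U_{\rho_{1}})\le\frac{1}{2}\log V_{j}+O(1)$. The second is the Gaussian maximum-entropy inequality applied to the variance $V_{j}+\rho_{1}^{2}/12\le 2V_{j}$. For the first, write $\pi_{j}\sigma=\sum_{i}X_{i}$ with the $X_{i}$ independent and $\mathrm{diam}(\mathrm{supp}(X_{i}))\le\epsilon^{-1}$; then $\mathbb{E}|X_{i}-\mathbb{E}X_{i}|^{3}\le\epsilon^{-1}\mathrm{Var}(X_{i})$, so the Berry--Esseen theorem gives $\sup_{t}|F_{\pi_{j}\sigma}(t)-\Phi_{V_{j}}(t)|\le c\epsilon^{-1}V_{j}^{-1/2}$ (with $\Phi_{V_{j}}$ the Gaussian c.d.f.\ of mean $\mathbb{E}\pi_{j}\sigma$ and variance $V_{j}$), whence $\pi_{j}\sigma([x,x+\rho_{2}])\le\rho_{2}(2\pi V_{j})^{-1/2}+2c\epsilon^{-1}V_{j}^{-1/2}$ for all $x$. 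Dividing by $\rho_{2}$ bounds the density of $\pi_{j}\sigma+U_{\rho_{2}}$ by $V_{j}^{-1/2}\bigl(\tfrac{1}{\sqrt{2\pi}}+2c\epsilon^{-1}\rho_{2}^{-1}\bigr)\le V_{j}^{-1/2}$ for $k$ large, so $H(\pi_{j}\sigma+U_{\rho_{2}})\ge-\log(V_{j}^{-1/2})=\frac{1}{2}\log V_{j}$. Combining, $H(\pi_{j}\sigma;\rho_{2}\mid\rho_{1})\ge\log(\rho_{1}/\rho_{2})-O(1)=\chi_{j}m-O(1)$.

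\emph{Main obstacle.} The essential point is the calibration of $a$ in step (i): one must check that $a\approx\frac{1}{2\chi_{j}}\log k$ makes \emph{every} auxiliary side length $\lambda_{j'}^{l-a+m}$ ($j'\ne j$) large compared with the matching standard deviation $\sqrt{V_{j'}}$ --- using boundedness of $V_{j'}$ for $j'<j$ and the ordering $\chi_{j'}>\chi_{j}$ for $j'>j$ --- so that conditioning on the remaining coordinates is essentially free, reducing the $d$-dimensional non-conformal statement to a clean one-dimensional CLT estimate. The active direction is then handled by the Berry--Esseen argument above, whose point is that, because average (equivalently differential) entropy is used, no residual error terms appear beyond the additive $O(1)$ that is absorbed by $m\gg\epsilon^{-1}$.
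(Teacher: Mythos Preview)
The paper does not prove this lemma; it cites \cite[Lemma 3.2]{Rap_SA_diag} and only remarks that the proof there relies on the Berry--Esseen theorem. Your argument is correct and follows precisely this Berry--Esseen strategy: you isolate the $j$-th coordinate by showing the conditioning on $\pi_{[d]\setminus\{j\}}^{-1}\mathcal{E}_{l-a+m}$ costs only $O(1)$ bits (the calibration of $a$ against the variances $V_{j'}$ for $j'\ne j$ is exactly right, using boundedness for $j'<j$ and the exponent gap $\chi_{j'}/\chi_{j}>1$ for $j'>j$), and then handle the one-dimensional marginal $\pi_{j}\sigma$ by combining the Gaussian maximum-entropy bound at scale $\rho_{1}$ with a Berry--Esseen density estimate at scale $\rho_{2}$. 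This is the natural approach and matches what the paper alludes to; there is nothing substantive to compare.
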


\begin{cor}
\label{cor:exists j s.t. saturated}Let $0<\epsilon<1$ and $m_{1},...,m_{d},l_{1},...,l_{d},k_{1},...,k_{d}\in\mathbb{Z}_{>0}$
be such that $\epsilon^{-1}\ll m_{d}$, $m_{j}\ll l_{j}\ll k_{j}$
for $1\le j\le d$, and $k_{j+1}\ll m_{j}$ for $1\le j<d$. Let $x,y\in\mathbb{R}^{d}$
be with $\epsilon\le|x-y|\le\epsilon^{-1}$, and set $\zeta:=\frac{1}{2}(\delta_{x}+\delta_{y})$.
Then there exists $1\le j\le d$ so that for $a:=\left\lfloor \frac{1}{2\chi_{j}}\log k_{j}\right\rfloor $,
\[
\frac{1}{m_{j}}H\left(\zeta^{*k_{j}},\mathcal{E}_{l_{j}-a+m_{j}}\mid\mathcal{E}_{l_{j}-a}\vee\pi_{[d]\setminus\{j\}}^{-1}\mathcal{E}_{l_{j}-a+m_{j}}\right)>\chi_{j}-\epsilon.
\]
\end{cor}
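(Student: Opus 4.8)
The plan is to apply Lemma \ref{lem:ent of slices of O(1) measures} to $\sigma:=\zeta^{*k_{j}}$, with $\theta_{1}=\dots=\theta_{k_{j}}=\zeta$, for a carefully chosen index $j$ and with $\epsilon$ replaced by an auxiliary parameter $0<\epsilon_{0}\le\epsilon$. Write $w:=x-y=(w_{1},\dots,w_{d})$, so $\epsilon\le|w|\le\epsilon^{-1}$, and note that, since $\pi_{j'}$ is linear, $\mathrm{Var}(\pi_{j'}\zeta^{*k})=kw_{j'}^{2}/4$ for all $k\ge1$ and $1\le j'\le d$. We use the convention for $\ll$ from Section \ref{subsec:Basic-notations}, writing $f$ for a strictly increasing function underlying all occurrences of $\ll$ in the hypothesis, which we may enlarge at will; thus e.g. $\epsilon^{-1}\ll m_{d}$ reads $m_{d}\ge f(\epsilon^{-1})$.

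The first task is to choose $j$. I claim there is $1\le j\le d$ with $|w_{j}|^{-2}\ll m_{j}$. If not, then $m_{j}<f(|w_{j}|^{-2})$, i.e. $w_{j}^{2}<1/f^{-1}(m_{j})\le1/f^{-1}(m_{d})$, for every $j$ (using $m_{j}\ge m_{d}$); summing over $j$ gives $\epsilon^{2}\le|w|^{2}<d/f^{-1}(m_{d})$, hence $m_{d}<f(d\epsilon^{-2})$, contradicting $\epsilon^{-1}\ll m_{d}$ once $f$ is chosen large enough in terms of $d$. Let $j$ be the least index with $|w_{j}|^{-2}\ll m_{j}$; then $w_{j}\ne0$, and each $j'<j$ satisfies $w_{j'}^{2}<1/f^{-1}(m_{j'})$.

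Next I would verify the hypotheses of Lemma \ref{lem:ent of slices of O(1) measures} for this $j$. Let $j'<j$. Since $k_{j}$ precedes $m_{j'}$ along the chain defining the hierarchy, transitivity of $\ll$ gives $k_{j}\ll m_{j'}$, so $k_{j}\le f^{-1}(m_{j'})$ and therefore
\[ \mathrm{Var}(\pi_{j'}\zeta^{*k_{j}})=k_{j}w_{j'}^{2}/4<\frac{k_{j}}{4f^{-1}(m_{j'})}\le\frac{1}{4}. \]
Set $\epsilon_{0}:=\min\{\epsilon,w_{j}^{2}/4\}$. Then $\epsilon_{0}\le\epsilon$, and $\epsilon_{0}^{-1}=\max\{\epsilon^{-1},4w_{j}^{-2}\}\ll m_{j}$ since $\epsilon^{-1}\ll m_{d}\le m_{j}$ and $|w_{j}|^{-2}\ll m_{j}$; consequently $\epsilon_{0}^{-1}\ll m_{j}\ll l_{j}\ll k_{j}$. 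Also $\mathrm{diam}(\mathrm{supp}(\zeta))=|w|\le\epsilon^{-1}\le\epsilon_{0}^{-1}$, while $\mathrm{Var}(\pi_{j}\zeta^{*k_{j}})=k_{j}w_{j}^{2}/4\ge\epsilon_{0}k_{j}$ and $\mathrm{Var}(\pi_{j'}\zeta^{*k_{j}})<1/4<\epsilon_{0}^{-1}$ for all $j'<j$. Hence Lemma \ref{lem:ent of slices of O(1) measures}, applied with parameters $\epsilon_{0},m_{j},l_{j},k_{j}$ and measure $\zeta^{*k_{j}}$, yields, with $a=\lfloor\frac{1}{2\chi_{j}}\log k_{j}\rfloor$ as in the statement,
\[ \frac{1}{m_{j}}H(\zeta^{*k_{j}},\mathcal{E}_{l_{j}-a+m_{j}}\mid\mathcal{E}_{l_{j}-a}\vee\pi_{[d]\setminus\{j\}}^{-1}\mathcal{E}_{l_{j}-a+m_{j}})>\chi_{j}-\epsilon_{0}\ge\chi_{j}-\epsilon, \]
which is the required inequality.

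The main obstacle is the selection of $j$. On the one hand, direction $j$ must carry enough variance for the lemma to apply to $\zeta^{*k_{j}}$, which forces $|w_{j}|$ to be bounded below relative to $m_{j}$; on the other hand, the lemma demands that the variances in the earlier directions $j'<j$ be $O(1)$, and since $k_{j}$ is enormous this effectively forces $w_{j'}$ to be extremely small there. These opposing requirements are reconciled precisely by the spacing $k_{j}\ll m_{j'}$ ($j'<j$) built into the hierarchy: a direction that is not "usable at level $m_{j'}$" automatically has variance negligible at the much finer scale $k_{j}$; and $|w|\ge\epsilon$ together with $\epsilon^{-1}\ll m_{d}$ guarantees that a usable direction exists.
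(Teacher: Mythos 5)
Your proof follows essentially the same route as the paper: choose the smallest index $j$ for which the component $w_{j}:=x_{j}-y_{j}$ is ``not too small'', verify that for $j'<j$ the variance of $\pi_{j'}\zeta^{*k_{j}}$ is $O(1)$, and apply Lemma \ref{lem:ent of slices of O(1) measures} to $\zeta^{*k_{j}}$ with a modified parameter $\epsilon_{0}\le\epsilon$ in place of $\epsilon$. The core computation and the downstream verification of the hypotheses of Lemma \ref{lem:ent of slices of O(1) measures} are correct.

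There is, however, a genuine flaw in the existence argument for $j$. You derive ``$m_{d}<f(d\epsilon^{-2})$'' and assert that this ``contradicts $\epsilon^{-1}\ll m_{d}$ once $f$ is chosen large enough in terms of $d$''. Since $d\epsilon^{-2}\ge\epsilon^{-1}$ and $f$ is increasing, one always has $f(d\epsilon^{-2})\ge f(\epsilon^{-1})$, so ``$m_{d}<f(d\epsilon^{-2})$'' and ``$m_{d}\ge f(\epsilon^{-1})$'' are perfectly compatible; enlarging $f$ does not help because it enlarges both sides. The error stems from using a single underlying function $f$ for the $\ll$'s in the hypothesis \emph{and} for the new relation ``$w_{j}^{-2}\ll m_{j}$'' you introduce. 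The fix is standard: use a function $g$ for your new $\ll$, then take the $f$ underlying the hypothesis large \emph{given} $g$ and $d$ (e.g.\ $f(t)\ge g(dt^{2})+1$). Concretely, one picks $j$ least with $w_{j}^{2}\ge1/g^{-1}(m_{j})$; if no such $j$ existed one would get $m_{d}<g(d\epsilon^{-2})\le f(\epsilon^{-1})$, a genuine contradiction, and the property ``$w_{j'}^{2}<1/g^{-1}(m_{j'})$ for $j'<j$'' combined with $k_{j}\ll m_{j'}$ (underlying function $f\ge g$) still gives $\mathrm{Var}(\pi_{j'}\zeta^{*k_{j}})\le1/4$. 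The paper avoids this bookkeeping altogether by choosing thresholds $\epsilon_{j}:=k_{j+1}^{-1}$ for $j<d$ and $\epsilon_{d}:=\epsilon^{2}/(4d)$ directly in terms of given quantities: then $\sum_{j}\epsilon_{j}<\epsilon^{2}/4$ is immediate (since $k_{j+1}^{-1}$ is tiny compared with $\epsilon^{2}$), and the least $j$ with $\mathrm{Var}(\pi_{j}\zeta)\ge\epsilon_{j}$ works, with $k_{j}\epsilon_{j'}=k_{j}/k_{j'+1}\le1$ for $j'<j$ providing the $O(1)$ bound.
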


\begin{proof}
Set $\epsilon_{d}:=\epsilon^{2}/(4d)$ and for $1\le j<d$ set $\epsilon_{j}:=k_{j+1}^{-1}$.
We have,
\[
4\sum_{j=1}^{d}\mathrm{Var}(\pi_{j}\zeta)=\sum_{j=1}^{d}\left|\pi_{j}x-\pi_{j}y\right|^{2}=|x-y|^{2}\ge\epsilon^{2}.
\]
Thus, there exists $1\le j\le d$, which we fix, so that $\mathrm{Var}(\pi_{j}\zeta)\ge\epsilon_{j}$
and $\mathrm{Var}(\pi_{j'}\zeta)<\epsilon_{j'}$ for $1\le j'<j$.
Setting $\sigma:=\zeta^{*k_{j}}$, it holds that $\mathrm{Var}(\pi_{j'}\sigma)=k_{j}\mathrm{Var}(\pi_{j'}\zeta)$
for $1\le j'\le d$. Hence, $\mathrm{Var}(\pi_{j}\sigma)\ge k_{j}\epsilon_{j}$
and $\mathrm{Var}(\pi_{j'}\sigma)<k_{j}\epsilon_{j'}\le1$ for $1\le j'<j$.
Now the lemma follows directly from Lemma \ref{lem:ent of slices of O(1) measures}.
\end{proof}

\subsubsection{Proof of the proposition}
\begin{proof}[Proof of Proposition \ref{prop:ent inc bernoulli}]
Let $0<\epsilon<1$ and $M,N,n\in\mathbb{Z}_{>0}$ be with $\epsilon^{-1},M\ll N$
and $n\ge N$, let $\mu\in\mathcal{M}(\mathbb{R}^{d})$ be $(\epsilon,m)$-non-saturated
for all $m\ge M$, let $x,y\in\mathbb{R}^{d}$ be with $\epsilon\le|s_{n}^{-1}(x-y)|\le\epsilon^{-1}$,
and set $\zeta:=\frac{1}{2}(\delta_{x}+\delta_{y})$.

Let $m_{1},...,m_{d},l_{1},...,l_{d},k_{1},...,k_{d}\in\mathbb{Z}_{>0}$
be such that $\epsilon^{-1},M\ll m_{d}$, $k_{1}\ll N$, $m_{j}\ll l_{j}\ll k_{j}$
for $1\le j\le d$, and $k_{j+1}\ll m_{j}$ for $1\le j<d$. By Corollary
\ref{cor:exists j s.t. saturated} there exists $1\le j\le d$ so
that for $a:=\left\lfloor \frac{1}{2\chi_{j}}\log k_{j}\right\rfloor $,
\[
\frac{1}{m_{j}}H\left(\left(S_{s_{n}}^{-1}\zeta\right)^{*k_{j}},\mathcal{E}_{l_{j}-a+m_{j}}\mid\mathcal{E}_{l_{j}-a}\vee\pi_{[d]\setminus\{j\}}^{-1}\mathcal{E}_{l_{j}-a+m_{j}}\right)>\chi_{j}-\epsilon/4.
\]
Thus, setting $n':=n+l_{j}-a$ and $\mathcal{C}:=\mathcal{E}_{n'}\vee\pi_{[d]\setminus\{j\}}^{-1}\mathcal{E}_{n'+m_{j}}$,
it follows from $\epsilon^{-1}\ll m_{d}$ and (\ref{eq:commens part by s_n})
that
\[
\frac{1}{m_{j}}H\left(\zeta^{*k_{j}},\mathcal{E}_{n'+m_{j}}\mid\mathcal{C}\right)>\chi_{j}-\epsilon/3.
\]
From this, since $\mu$ is $(\epsilon,m_{j})$-non-saturated, and
by (\ref{eq:commens part by T_x}) combined with the concavity of
conditional entropy,
\[
\frac{1}{m_{j}}H\left(\mu*\zeta^{*k_{j}},\mathcal{E}_{n'+m_{j}}\mid\mathcal{C}\right)>\frac{1}{m_{j}}H\left(\mu,\mathcal{E}_{n'+m_{j}}\mid\mathcal{C}\right)+\epsilon/2.
\]

By (\ref{eq:commens part by T_x}) and concavity it also follows that,
\[
\frac{1}{m_{j}}H\left(\mu*\zeta^{*k_{j}},\pi_{[d]\setminus\{j\}}^{-1}\mathcal{E}_{n'+m_{j}}\mid\mathcal{E}_{n'}\right)\ge\frac{1}{m_{j}}H\left(\mu,\pi_{[d]\setminus\{j\}}^{-1}\mathcal{E}_{n'+m_{j}}\mid\mathcal{E}_{n'}\right)-\epsilon/6.
\]
From the last two inequalities together with the conditional entropy
formula,
\[
\frac{1}{m_{j}}H\left(\mu*\zeta^{*k_{j}},\mathcal{E}_{n'+m_{j}}\mid\mathcal{E}_{n'}\right)\ge\frac{1}{m_{j}}H\left(\mu,\mathcal{E}_{n'+m_{j}}\mid\mathcal{E}_{n'}\right)+\epsilon/3.
\]
Since $m_{j},l_{j}\ll k_{j}$, we have $n'+m_{j}\le n$. Thus, by
the last inequality and concavity,
\begin{multline*}
H\left(\mu*\zeta^{*k_{j}},\mathcal{E}_{n}\right)\\
=H\left(\mu*\zeta^{*k_{j}},\mathcal{E}_{n}\mid\mathcal{E}_{n'+m_{j}}\right)+H\left(\mu*\zeta^{*k_{j}},\mathcal{E}_{n'+m_{j}}\mid\mathcal{E}_{n'}\right)+H\left(\mu*\zeta^{*k_{j}},\mathcal{E}_{n'}\right)\\
\ge H\left(\mu,\mathcal{E}_{n}\mid\mathcal{E}_{n'+m_{j}}\right)+H\left(\mu,\mathcal{E}_{n'+m_{j}}\mid\mathcal{E}_{n'}\right)+H\left(\mu,\mathcal{E}_{n'}\right)+(m_{j}\epsilon)/3-O(1)\\
\ge H\left(\mu,\mathcal{E}_{n}\right)+(m_{j}\epsilon)/4.
\end{multline*}
Together with (\ref{eq:two defs of ent are equiv}) this gives,
\[
H\left(\mu*\zeta^{*k_{j}},s_{n}\right)\ge H\left(\mu,s_{n}\right)+(m_{j}\epsilon)/5.
\]
Hence by Lemma \ref{lem:KV by submodularity},
\begin{equation}
H\left(\mu*\zeta,s_{n}\right)\ge H\left(\mu,s_{n}\right)+\frac{m_{j}\epsilon}{5k_{j}}.\label{eq:lb no conditioning}
\end{equation}

Let $X$ and $Z$ be independent random vectors with distributions
$S_{s_{n-N}}^{-1}\mu$ and $S_{s_{n-N}}^{-1}\zeta$ respectively,
and set $Y:=X+Z$. From $|s_{n}^{-1}(x-y)|\le\epsilon^{-1}$ and (\ref{eq:prop of sequences})
it follows that $\left|s_{n-N}^{-1}x+X-Y\right|\le2d\epsilon^{-1}\lambda_{1}^{N}$
almost surly. Thus, by Lemma \ref{lem:close rv --> close avg ent}
and (\ref{eq:avg ent is trans inv}),
\[
\left|H\left(X;1_{\mathbb{R}_{>0}^{d}}\right)-H\left(Y;1_{\mathbb{R}_{>0}^{d}}\right)\right|=O\left(-\epsilon^{-1}\lambda_{1}^{N}\log\left(\epsilon^{-1}\lambda_{1}^{N}\right)\right),
\]
where $1_{\mathbb{R}_{>0}^{d}}$ is the identity of $\mathbb{R}_{>0}^{d}$.
From this, by (\ref{eq:scaling rel of avg ent}) and since $\epsilon^{-1},m_{j},k_{j}\ll N$,
we may assume that
\[
\left|H\left(\mu;s_{n-N}\right)-H\left(\mu*\zeta;s_{n-N}\right)\right|\le\frac{m_{j}\epsilon}{10k_{j}}.
\]
By combining this with (\ref{eq:lb no conditioning}) we get
\[
H\left(\mu*\zeta,s_{n}\mid s_{n-N}\right)\ge H\left(\mu,s_{n}\mid s_{n-N}\right)+\frac{m_{j}\epsilon}{10k_{j}},
\]
which completes the proof of the proposition with $\delta:=(m_{j}\epsilon)/(10k_{j})$.
\end{proof}

\subsection{\label{subsec:Proof-of-ent increase result}Proof of Theorem \ref{thm:effective ent inc result}}

For the proof of the theorem we shall need the following lemma.
\begin{lem}
\label{lem:prep for effct ent inc thm}For all $N\ge1$ there exists
$C=C(\lambda,N)>1$ such that the following holds. Let $\nu\in\mathcal{M}(\mathbb{R}^{d})$,
$0<\beta<1/2$ and $t_{2}>t_{1}>0$ be with $t_{2}-t_{1}>3N-\frac{1}{\log\lambda_{1}}$
and $\frac{1}{t_{2}-t_{1}}H(\nu;\lambda^{t_{2}}\mid\lambda^{t_{1}})>\beta$.
Set $\tau_{1}:=\left\lceil \frac{t_{1}}{N}+1-\frac{1}{N\log\lambda_{1}}\right\rceil $
and $\tau_{2}:=\left\lfloor \frac{t_{2}}{N}\right\rfloor $, and let
$\mathcal{N}$ be the set of all integers $\tau_{1}\le n<\tau_{2}$
so that,
\[
H\left(\nu;s_{Nn}\mid s_{Nn-N}\right)\ge\max\left\{ \frac{2}{3}H\left(\nu;s_{Nn+N}\mid s_{Nn}\right),\frac{\beta}{6}\right\} .
\]
Then,
\[
\sum_{n\in\mathcal{N}}H(\nu;s_{Nn}\mid s_{Nn-N})\ge\frac{\beta}{6}(t_{2}-t_{1})-C.
\]
\end{lem}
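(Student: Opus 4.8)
The goal is a lower bound on $\sum_{n\in\mathcal{N}} H(\nu;s_{Nn}\mid s_{Nn-N})$, so the natural strategy is to start from the single large-gap quantity $H(\nu;\lambda^{t_2}\mid\lambda^{t_1})$, break it into a telescoping sum over blocks of length $N$ in the scale parameter, and then argue that the blocks $n\notin\mathcal{N}$ contribute little. First I would replace the scales $\lambda^{t_i}$ by the integral-ratio scales $s_{Nn}$: since $|s_k/\lambda^k|,|\lambda^k/s_k|=O(1)$ by \eqref{eq:prop of sequences}, and using \eqref{eq:two defs of ent are equiv} together with Lemma \ref{lem:cond avg ent is pos =000026 ub} to control the mismatch at the two ends (the choice of $\tau_1,\tau_2$ is exactly designed so that $s_{N\tau_1}\le\lambda^{t_1}$-ish and $s_{N\tau_2}\ge\lambda^{t_2}$-ish up to bounded factors, which is where the floor/ceiling and the $-\tfrac{1}{\log\lambda_1}$ corrections come from), one gets
\[
\sum_{n=\tau_1}^{\tau_2-1} H(\nu;s_{Nn+N}\mid s_{Nn}) \ge H(\nu;\lambda^{t_2}\mid\lambda^{t_1}) - C_0 \ge \beta(t_2-t_1) - C_0
\]
for a constant $C_0=C_0(\lambda,N)$. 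Note the condition $t_2-t_1>3N-\tfrac{1}{\log\lambda_1}$ simply guarantees $\tau_2>\tau_1$ so the sum is nonempty.

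**Splitting off the bad blocks.** Now I would estimate the contribution of indices $n$ not in $\mathcal{N}$. An index $\tau_1\le n<\tau_2$ fails to lie in $\mathcal{N}$ either because (i) $H(\nu;s_{Nn}\mid s_{Nn-N}) < \tfrac{\beta}{6}$, or because (ii) $H(\nu;s_{Nn}\mid s_{Nn-N}) < \tfrac{2}{3}H(\nu;s_{Nn+N}\mid s_{Nn})$. For type-(i) blocks, each contributes at most $\tfrac{\beta}{6}$ to the telescoping sum of the \emph{shifted} quantities — but more usefully, one wants to bound $H(\nu;s_{Nn+N}\mid s_{Nn})$ for such $n$, not $H(\nu;s_{Nn}\mid s_{Nn-N})$. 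Here is the key chaining idea from \cite{varju2019absolute}: for a \emph{maximal run} of consecutive type-(ii) indices $n, n+1,\dots,n+\ell-1$, iterating $H(\nu;s_{Nm}\mid s_{Nm-N}) < \tfrac23 H(\nu;s_{Nm+N}\mid s_{Nm})$ backwards telescopes to bound every term in the run geometrically by the last term, whose successor ($m=n+\ell$) either lies in $\mathcal{N}$ or is a type-(i) block; the geometric factor $(2/3)^k$ makes the whole run's contribution $O\!\big(H(\nu;s_{N(n+\ell)}\mid s_{N(n+\ell)-N})\big)$, which is either $O(\beta)$ (type-(i)) or absorbed into the $\mathcal{N}$-sum. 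Summing the bounds: the total contribution of all $n\notin\mathcal N$ to $\sum_{n=\tau_1}^{\tau_2-1}H(\nu;s_{Nn+N}\mid s_{Nn})$ is at most $\tfrac{2}{3}\sum_{n\in\mathcal N}H(\nu;s_{Nn}\mid s_{Nn-N}) + O(\beta(\tau_2-\tau_1)\cdot\text{something})$ — and the coefficient on $\beta(t_2-t_1)$ has to come out strictly less than $1$.

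**Combining.** More carefully: I would write $\sum_{n=\tau_1}^{\tau_2-1}H(\nu;s_{Nn+N}\mid s_{Nn}) = \Sigma_{\mathcal N} + \Sigma_{\text{bad}}$ where $\Sigma_{\mathcal N}$ sums over $n\in\mathcal N$ (and each such term is $\ge\tfrac{\beta}{6}$ but, more to the point, $\le\tfrac32 H(\nu;s_{Nn}\mid s_{Nn-N})$ essentially by the defining inequality — wait, that's the wrong direction; rather each $n\in\mathcal N$ has $H(\nu;s_{Nn+N}\mid s_{Nn})\le\tfrac32 H(\nu;s_{Nn}\mid s_{Nn-N})$, so $\Sigma_{\mathcal N}\le\tfrac32\sum_{n\in\mathcal N}H(\nu;s_{Nn}\mid s_{Nn-N})$). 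For $\Sigma_{\text{bad}}$, the run-chaining argument gives $\Sigma_{\text{bad}} \le 2\big(\sum_{n\in\mathcal N}H(\nu;s_{Nn}\mid s_{Nn-N}) + \tfrac{\beta}{6}(\tau_2-\tau_1)\big)\cdot\sum_{k\ge1}(2/3)^{k-1}$ up to reorganization — the point being both $\Sigma_{\mathcal N}$ and $\Sigma_{\text{bad}}$ are bounded by an absolute constant times $\sum_{n\in\mathcal N}H(\nu;s_{Nn}\mid s_{Nn-N})$ plus a term comparable to $\tfrac{\beta}{6}(t_2-t_1)$ (using $N(\tau_2-\tau_1)\le t_2-t_1+O(N)$). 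Plugging back into $\beta(t_2-t_1)-C_0 \le \Sigma_{\mathcal N}+\Sigma_{\text{bad}}$ and solving for $\sum_{n\in\mathcal N}H(\nu;s_{Nn}\mid s_{Nn-N})$ yields $\sum_{n\in\mathcal N}H(\nu;s_{Nn}\mid s_{Nn-N})\ge \tfrac{\beta}{6}(t_2-t_1) - C$ for suitable $C=C(\lambda,N)$; the worst-case bookkeeping of the geometric series is what forces the specific constant $1/6$.

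**Main obstacle.** The routine parts are the scale-conversion ($\lambda^{t}\leftrightarrow s_{Nn}$, handled by \eqref{eq:prop of sequences}, \eqref{eq:two defs of ent are equiv}, Lemma \ref{lem:cond avg ent is pos =000026 ub}) and the nonnegativity of conditional average entropy (Lemma \ref{lem:cond avg ent is pos =000026 ub} again, crucial so that dropping or adding blocks only helps in the right direction). The genuinely delicate step is the chaining over maximal runs of type-(ii) indices and the accounting that shows the total bad contribution is a \emph{strict} fraction (times a universal constant) of $\beta(t_2-t_1)$ plus the $\mathcal N$-sum — one must be careful that boundary blocks of a run (the successor index $n+\ell$) are correctly attributed to either $\mathcal N$ or the type-(i) pool without double counting, and that the geometric series $\sum(2/3)^k=3$ gives enough room after subtracting from $\beta(t_2-t_1)$. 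This is precisely the argument of \cite[proof of Lemma 28 or 29]{varju2019absolute} in the one-dimensional case, and its extension here is essentially mechanical once the average-entropy framework and the integral-ratio scales $s_n$ are in place.
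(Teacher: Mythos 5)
Your approach is essentially the paper's: both telescope $H(\nu;\lambda^{t_2}\mid\lambda^{t_1})$ into blocks $\alpha_n := H(\nu;s_{Nn}\mid s_{Nn-N})$, chain geometrically over maximal runs where $\alpha_n < \tfrac{2}{3}\alpha_{n+1}$, and discard the indices where $\alpha_n < \beta/6$. The paper's bookkeeping is slightly cleaner — it first introduces $\mathcal{N}' := \{n : \alpha_n \ge \tfrac{2}{3}\alpha_{n+1}\}$, shows $\sum_{n=\tau_1}^{\tau_2}\alpha_n \le 3\sum_{n\in\mathcal{N}'}\alpha_n + O_{\lambda,N}(1)$ from the run argument, then removes $\sum_{n\in\mathcal{N}'\setminus\mathcal{N}}\alpha_n \le \tfrac{\beta}{6}(t_2-t_1)$ — which avoids the ``reorganization'' you deferred and makes the geometric factor $3$ and the $\beta/6$ threshold combine transparently to give exactly the constant $\beta/6$ (whereas your stated $\Sigma_{\text{bad}}$ bound would need the tightening you flag).
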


\begin{proof}
Let $N\ge1$ be given, and let $\nu,\beta,t_{1},t_{2},\tau_{1},\tau_{2}$
and $\mathcal{N}$ be as in the statement of the lemma. For each $\tau_{1}\le n\le\tau_{2}$
set $\alpha_{n}:=H\left(\nu;s_{Nn}\mid s_{Nn-N}\right)$. Let $\mathcal{N}'$
be the set of all integers $\tau_{1}\le n<\tau_{2}$ so that $\alpha_{n}\ge\frac{2}{3}\alpha_{n+1}$.
Let $n_{1}<n_{2}<...<n_{k-1}$ be an enumeration of $\mathcal{N}'$,
and set $n_{0}:=\tau_{1}-1$ and $n_{k}:=\tau_{2}$. For $1\le q\le k$
and $n_{q-1}<n<n_{q}$ we have $\alpha_{n}\le\frac{2}{3}\alpha_{n+1}$,
which by induction gives $\alpha_{n}\le(2/3)^{n_{q}-n}\alpha_{n_{q}}$.
Thus,
\[
\sum_{n=n_{q-1}+1}^{n_{q}}\alpha_{n}\le3\alpha_{n_{q}}\text{ for }1\le q\le k.
\]

By (\ref{eq:prop of sequences}) and Lemma \ref{lem:cond avg ent is pos =000026 ub},
\[
H\left(\nu;s_{N\tau_{1}-N}\mid\lambda^{t_{1}}\right),H\left(\nu;\lambda^{t_{2}}\mid s_{N\tau_{2}}\right),\alpha_{n_{k}}=O_{\lambda,N}(1).
\]
Additionally,
\[
H(\nu;\lambda^{t_{2}}\mid\lambda^{t_{1}})=H\left(\nu;\lambda^{t_{2}}\mid s_{N\tau_{2}}\right)+H\left(\nu;s_{N\tau_{1}-N}\mid\lambda^{t_{1}}\right)+\sum_{q=1}^{k}\sum_{n=n_{q-1}+1}^{n_{q}}\alpha_{n}.
\]

By combining all of this together with $\frac{1}{t_{2}-t_{1}}H(\nu;\lambda^{t_{2}}\mid\lambda^{t_{1}})>\beta$,
we obtain
\[
\beta(t_{2}-t_{1})<3\sum_{n\in\mathcal{N}'}\alpha_{n}+O_{\lambda,N}(1).
\]
Moreover, by the definition of $\mathcal{N}$,
\[
\sum_{n\in\mathcal{N}'\setminus\mathcal{N}}\alpha_{n}\le\frac{\beta}{6}|\mathcal{N}'|\le\frac{\beta}{6}(t_{2}-t_{1}).
\]
Now, from the last two inequalities
\[
\frac{1}{2}\beta(t_{2}-t_{1})<3\sum_{n\in\mathcal{N}}\alpha_{n}+O_{\lambda,N}(1),
\]
which completes the proof of the lemma. 
\end{proof}
We can now prove Theorem \ref{thm:effective ent inc result}, which
is the following statement.
\begin{thm*}
For each $\epsilon>0$ and $M\ge1$, there exists $C=C(\lambda,\epsilon,M)>1$
such that the following holds. Let $\mu\in\mathcal{M}(\mathbb{R}^{d})$
be $(\epsilon,m)$-non-saturated for all $m\ge M$, and let $\nu\in\mathcal{M}(\mathbb{R}^{d})$,
$0<\beta<1/2$ and $t_{2}>t_{1}>0$ be with $\frac{1}{t_{2}-t_{1}}H(\nu;\lambda^{t_{2}}\mid\lambda^{t_{1}})>\beta$.
Then,
\[
H\left(\nu*\mu;\lambda^{t_{2}}\mid\lambda^{t_{1}}\right)\ge H\left(\mu;\lambda^{t_{2}}\mid\lambda^{t_{1}}\right)+C^{-1}\beta\left(\log\beta^{-1}\right)^{-1}(t_{2}-t_{1})-C.
\]
\end{thm*}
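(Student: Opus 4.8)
The plan is to follow the three-step scheme behind \cite[Theorem 3]{varju2019absolute}: first decompose $\nu$ at suitable scales so that a non-negligible fraction of its mass is carried by Bernoulli measures; then convolve with $\mu$ and invoke the Bernoulli entropy-increase estimate; and finally average the resulting local gains over scales. The two substantial inputs, Propositions \ref{prop:measure decomp} and \ref{prop:ent inc bernoulli}, are already in hand, so the task is the assembly, and the governing principle is to carry it out without ever incurring an additive $O(1)$ error per scale. Two preliminary reductions help. Average entropy at a fixed scale is continuous under weak convergence of uniformly compactly supported measures (by Lemma \ref{lem:avg ent as dif ent} together with dominated convergence of the mollified densities), so we may assume $\nu$ is finitely supported, as Proposition \ref{prop:measure decomp} requires. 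Moreover, passing to an integral ratio of scales and applying Lemma \ref{lem:conv don't dec} gives $H(\nu*\mu;r\mid r')\ge H(\mu;r\mid r')-O(1)$ for all $r\le r'$ with an absolute constant; hence when $t_2-t_1$ is bounded in terms of $\lambda$ the conclusion holds after enlarging $C$, and we may assume $t_2-t_1>3N-\tfrac{1}{\log\lambda_1}$, the regime to which Lemma \ref{lem:prep for effct ent inc thm} applies.

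Next I would set the parameters. Fix the block size $N$ appearing in Lemma \ref{lem:prep for effct ent inc thm} and Proposition \ref{prop:measure decomp} depending on $\lambda$ alone; Proposition \ref{prop:measure decomp} then supplies a separation parameter $\epsilon_1=\epsilon_1(\lambda)$. Independently, choose a much larger window $N_1=N_1(\lambda,\epsilon,M)$, large enough to serve as the window in Proposition \ref{prop:ent inc bernoulli} when the latter is applied with parameters $\bigl(\min\{\epsilon,\epsilon_1\},M\bigr)$; decoupling $N$ from $N_1$ in this way is what keeps the choice of constants from becoming circular. Apply Lemma \ref{lem:prep for effct ent inc thm} to produce the set $\mathcal{N}$ of scales $n\in[\tau_1,\tau_2)$ on which $h_n:=H(\nu;s_{Nn}\mid s_{Nn-N})\ge\max\{\tfrac23 H(\nu;s_{Nn+N}\mid s_{Nn}),\tfrac{\beta}{6}\}$, with $\sum_{n\in\mathcal{N}}h_n\ge\tfrac{\beta}{6}(t_2-t_1)-C$. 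By pigeonhole, one residue class of $[\tau_1,\tau_2)$ modulo $\lceil N_1/N\rceil$ carries at least a $\lceil N_1/N\rceil^{-1}$-fraction of $\sum_{n\in\mathcal{N}}h_n$; after also discarding the boundedly many of its scales with $n<\tau_1+N_1/N$, we obtain $n_1<\dots<n_p$ in $\mathcal{N}$ whose windows $[s_{Nn_q-N_1},s_{Nn_q}]$ are pairwise disjoint, lie between $\lambda^{t_2}$ and $\lambda^{t_1}$, and still satisfy $\sum_q h_{n_q}\ge c(\lambda,\epsilon,M)\,\tfrac{\beta}{6}(t_2-t_1)-C$.

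For each $n=n_q$ the defining inequality of $\mathcal{N}$ is precisely the hypothesis of Proposition \ref{prop:measure decomp}, so $\nu=\theta_n+\sum_i\zeta_{n,i}$ with each $\zeta_{n,i}$ a Bernoulli measure of rescaled diameter in $[\epsilon_1,\epsilon_1^{-1}]$ and with $\sum_i\|\zeta_{n,i}\|\ge\epsilon_1 h_n/\max\{1,-\log h_n\}\ge\epsilon_1 h_n/O(\log\beta^{-1})$, where I use $h_n\ge\beta/6$. Since the ratio $s_{Nn-N_1}/s_{Nn}$ is integral, applying Lemma \ref{lem:ent of sum of pos meas >=00003D} to $\nu*\mu=\theta_n*\mu+\sum_i\zeta_{n,i}*\mu$, then $H(\theta_n*\mu;\,\cdot\,)\ge\|\theta_n\|\,H(\mu;\,\cdot\,)$ via Lemma \ref{lem:conv don't dec} and $H(\zeta_{n,i}*\mu;\,\cdot\,)\ge\|\zeta_{n,i}\|\bigl(H(\mu;\,\cdot\,)+\delta\bigr)$ via Proposition \ref{prop:ent inc bernoulli} (legitimate since $\min\{\epsilon,\epsilon_1\}\le\epsilon_1$ and $Nn_q\ge N_1$), and summing, yields
\[
H(\nu*\mu;s_{Nn}\mid s_{Nn-N_1})\ge H(\mu;s_{Nn}\mid s_{Nn-N_1})+\delta\sum_i\|\zeta_{n,i}\|,
\]
with $\delta=\delta(\lambda,\epsilon,M)>0$. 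Then one telescopes the conditional entropy along the decreasing chain $s_{N\tau_1-N}\ge\cdots\ge s_{N\tau_2}$ assembled from the disjoint windows together with the gaps between and around them: every ratio occurring is integral, so Lemma \ref{lem:conv don't dec} disposes of the gaps and the bottom piece with no error while the displayed inequality handles the windows, giving
\[
H(\nu*\mu;s_{N\tau_2}\mid s_{N\tau_1-N})\ge H(\mu;s_{N\tau_2}\mid s_{N\tau_1-N})+\delta\sum_{q}\sum_i\|\zeta_{n_q,i}\|.
\]
Converting the two endpoints from $s_{N\tau_1-N},s_{N\tau_2}$ to $\lambda^{t_1},\lambda^{t_2}$ costs only $O(1)$ by Lemma \ref{lem:cond avg ent is pos =000026 ub} — this is exactly why $\tau_1$ and $\tau_2$ are defined so that $\lambda^{t_2}\le s_{N\tau_2}\le s_{N\tau_1-N}\le\lambda^{t_1}$ — and substituting $\sum_q\sum_i\|\zeta_{n_q,i}\|\ge\frac{\epsilon_1}{O(\log\beta^{-1})}\sum_q h_{n_q}\ge\frac{\epsilon_1\,c(\lambda,\epsilon,M)}{O(\log\beta^{-1})}\,\tfrac{\beta}{6}(t_2-t_1)-C$ and absorbing every constant into a single $C=C(\lambda,\epsilon,M)$ finishes the proof.

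The genuine difficulty is not any individual estimate but the global error accounting: the scheme must be organised so that no additive $O(1)$ enters at any of the $\Theta(t_2-t_1)$ scales, which is what forces the systematic use of average conditional entropy along the integral-ratio sequence $\{s_n\}$ — postponing the passage to the scales $\lambda^{t_i}$ to the two endpoints — and one must fix the parameters in the order $N\ll N_1$ (with $N$ depending only on $\lambda$) so that the constants emitted by Propositions \ref{prop:measure decomp} and \ref{prop:ent inc bernoulli} are mutually consistent. Everything else reduces to applications of those two propositions and of the submodularity-based lemmas established in Section \ref{sec:Preliminaries}.
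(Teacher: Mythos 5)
Your proof is correct and follows essentially the same scheme as the paper's: Lemma \ref{lem:prep for effct ent inc thm} for scale selection, Proposition \ref{prop:measure decomp} for the Bernoulli decomposition, Proposition \ref{prop:ent inc bernoulli} together with Lemmata \ref{lem:ent of sum of pos meas >=00003D} and \ref{lem:conv don't dec} for the local gain, and a residue-class pigeonhole plus integral-ratio telescoping to assemble the global estimate. The only divergence is cosmetic — you fix the decomposition block size $N$ depending on $\lambda$ alone and let only the larger window $N_1$ absorb the dependence on $\epsilon,M$, whereas the paper takes both scales $N_1\ll N_2$ to depend on $\epsilon,M$ — but the bookkeeping is equivalent and the constants still depend only on $\lambda,\epsilon,M$.
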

\begin{proof}
Let $0<\epsilon<1$ and $M\ge1$ be given, and let $N_{1},N_{2}\in\mathbb{Z}_{>0}$
and $0<\delta<1$ be with $N_{2}/N_{1}\in\mathbb{Z}$ and
\begin{equation}
\epsilon^{-1},M\ll N_{1}\ll\delta^{-1}\ll N_{2}.\label{eq:rel bet param}
\end{equation}
Suppose also that $N_{1}$ is large with respect to $\lambda$. Let
$\mu,\nu,\beta,t_{1},t_{2}$ be as in the statement of the theorem.

By Lemma \ref{lem:close rv --> close avg ent} we may assume that
$\nu$ is finitely supported. Additionally, by Lemma \ref{lem:cond avg ent is pos =000026 ub}
we may assume that $t_{2}-t_{1}>3N_{1}-\frac{1}{\log\lambda_{1}}$.
Set $\tau_{1}:=\left\lceil \frac{t_{1}}{N_{1}}+1-\frac{1}{N_{1}\log\lambda_{1}}\right\rceil $
and $\tau_{2}:=\left\lfloor \frac{t_{2}}{N_{1}}\right\rfloor $, and
let $\mathcal{N}$ be the set of all integers $\tau_{1}\le n<\tau_{2}$
so that,
\[
H\left(\nu;s_{N_{1}n}\mid s_{N_{1}n-N_{1}}\right)\ge\max\left\{ \frac{2}{3}H\left(\nu;s_{N_{1}n+N_{1}}\mid s_{N_{1}n}\right),\frac{\beta}{6}\right\} .
\]
For an integer $0\le q<N_{2}/N_{1}$ let $\mathcal{N}_{q}$ be the
set of all $n\in\mathcal{N}$ so that $n\ge\tau_{1}+N_{2}/N_{1}$
and $n=q\mod N_{2}/N_{1}$. By Lemma \ref{lem:prep for effct ent inc thm}
there exists $0\le q<N_{2}/N_{1}$, which we fix for the rest of the
proof, so that
\begin{equation}
\sum_{n\in\mathcal{N}_{q}}H\left(\nu;s_{N_{1}n}\mid s_{N_{1}n-N_{1}}\right)\ge\frac{N_{1}\beta}{6N_{2}}(t_{2}-t_{1})-O_{\lambda,N_{1},N_{2}}(1).\label{eq:sum over n in N_q >=00003D}
\end{equation}

Let $n\in\mathcal{N}_{q}$ be given. Since $\nu$ is finitely supported
and by Proposition \ref{prop:measure decomp}, there exist nonnegative
measures $\theta_{n},\zeta_{n,1},...,\zeta_{n,L_{n}}$ so that $\nu=\theta_{n}+\zeta_{n,1}+...+\zeta_{n,L_{n}}$,
\begin{eqnarray}
\sum_{i=1}^{L_{n}}\Vert\zeta_{n,i}\Vert & \ge & \delta\frac{H\left(\nu;s_{N_{1}n}\mid s_{N_{1}n-N_{1}}\right)}{\max\left\{ 1,-\log H\left(\nu;s_{N_{1}n}\mid s_{N_{1}n-N_{1}}\right)\right\} }\nonumber \\
 & \ge & \delta\frac{H\left(\nu;s_{N_{1}n}\mid s_{N_{1}n-N_{1}}\right)}{-4\log\beta},\label{eq:lb on mass of etas}
\end{eqnarray}
and for each $1\le i\le L_{n}$ there exist $x_{n,i},y_{n,i}\in\mathbb{R}^{d}$
with $\zeta_{n,i}=\frac{\Vert\zeta_{n,i}\Vert}{2}(\delta_{x_{n,i}}+\delta_{y_{n,i}})$
and $\delta\le|s_{N_{1}n}^{-1}(x_{n,i}-y_{n,i})|\le\delta^{-1}$.
Since $\mu$ is $(\epsilon,m)$-non-saturated for $m\ge M$, from
(\ref{eq:rel bet param}), and by Proposition \ref{prop:ent inc bernoulli},
for $1\le i\le L_{n}$ we have
\[
H\left(\mu*\zeta_{n,i};s_{N_{1}n}\mid s_{N_{1}n-N_{2}}\right)\ge\Vert\zeta_{n,i}\Vert\left(H\left(\mu;s_{N_{1}n}\mid s_{N_{1}n-N_{2}}\right)+N_{2}^{-1}\right).
\]
From this, by Lemmata \ref{lem:ent of sum of pos meas >=00003D} and
\ref{lem:conv don't dec}, and from (\ref{eq:lb on mass of etas}),
\begin{equation}
H\left(\mu*\nu;s_{N_{1}n}\mid s_{N_{1}n-N_{2}}\right)\ge H\left(\mu;s_{N_{1}n}\mid s_{N_{1}n-N_{2}}\right)+\delta\frac{H\left(\nu;s_{N_{1}n}\mid s_{N_{1}n-N_{1}}\right)}{-4N_{2}\log\beta}.\label{eq:holds for all n in N_q}
\end{equation}

Let $n_{1}<...<n_{k}$ be an enumeration of $\mathcal{N}_{q}$. By
the definition of $\mathcal{N}_{q}$ we have $n_{1}-N_{2}/N_{1}\ge\tau_{1}$,
$n_{k}<\tau_{2}$ and $n_{i+1}-N_{2}/N_{1}\ge n_{i}$ for $1\le i<k$.
Thus,
\begin{multline*}
H\left(\nu*\mu;s_{N_{1}\tau_{2}}\mid s_{N_{1}\tau_{1}}\right)=H\left(\nu*\mu;s_{N_{1}\tau_{2}}\mid s_{N_{1}n_{k}}\right)+H\left(\nu*\mu;s_{N_{1}n_{1}-N_{2}}\mid s_{N_{1}\tau_{1}}\right)\\
+\sum_{i=1}^{k}H\left(\nu*\mu;s_{N_{1}n_{i}}\mid s_{N_{1}n_{i}-N_{2}}\right)+\sum_{i=1}^{k-1}H\left(\nu*\mu;s_{N_{1}n_{i+1}-N_{2}}\mid s_{N_{1}n_{i}}\right).
\end{multline*}
From this, since (\ref{eq:holds for all n in N_q}) holds for all
$n\in\mathcal{N}_{q}$, and by Lemma \ref{lem:conv don't dec},
\[
H\left(\nu*\mu;s_{N_{1}\tau_{2}}\mid s_{N_{1}\tau_{1}}\right)\ge H\left(\mu;s_{N_{1}\tau_{2}}\mid s_{N_{1}\tau_{1}}\right)+\sum_{n\in\mathcal{N}_{q}}\delta\frac{H\left(\nu;s_{N_{1}n}\mid s_{N_{1}n-N_{1}}\right)}{-4N_{2}\log\beta}.
\]
Together with (\ref{eq:sum over n in N_q >=00003D}), the last inequality
gives
\begin{multline*}
H\left(\nu*\mu;s_{N_{1}\tau_{2}}\mid s_{N_{1}\tau_{1}}\right)\ge H\left(\mu;s_{N_{1}\tau_{2}}\mid s_{N_{1}\tau_{1}}\right)\\
+\frac{\delta N_{1}\beta}{24N_{2}^{2}\log\beta^{-1}}(t_{2}-t_{1})-O_{\lambda,N_{1},N_{2}}(1).
\end{multline*}
This, together with (\ref{eq:prop of sequences}) and Lemma \ref{lem:cond avg ent is pos =000026 ub},
completes the proof of the theorem.
\end{proof}

\section{\label{sec:Algebraic-approximation}Algebraic approximation for parameters
with dimension drop}

The purpose of this section is to establish Theorem \ref{thm:alg approx}.
The proof is similar to the proof of \cite[Theorem A.2]{rapaport20203maps},
which is a modification of the argument in \cite{BV-transcendent}.

In Section \ref{subsec:Convolution-factors}, we introduce notation
for convolution factors of $\mu_{\lambda}$ and state some basic properties.
In Section \ref{subsec:Non-saturation-of conv factors}, we establish
non-saturation of $\mu_{\lambda}$ and its convolution factors. In
Section \ref{subsec:Increasing-entropy-of conv}, we state a result
from \cite{BV-transcendent}, which relies on Theorem \ref{thm:effective ent inc result}
and plays a key role in the argument. In Section \ref{subsec:Diophantine-considerations},
we provide two necessary Diophantine statements. In Section \ref{subsec:Proof-of-Theorem alg approx},
we complete the proof of the theorem.

\subsection{\label{subsec:Convolution-factors}Convolution factors of $\mu_{\lambda}$
and basic properties}

Let $\{\xi_{n}\}_{n\in\mathbb{Z}}$ be a sequence of $\Lambda$-valued
i.i.d. random elements with $\mathbb{P}\{\xi_{0}=i\}=p_{i}$ for $i\in\Lambda$.
For a bounded subset $I$ of $\mathbb{R}_{>0}$, we denote by $\mu_{\lambda}^{I}\in\mathcal{M}(\mathbb{R}^{d})$
the law of random vector 
\[
\sum_{n\in\mathbb{Z}:\lambda_{1}^{n}\in I}a_{\xi_{n}}\lambda^{n},
\]
where recall from Section \ref{subsec:setup-and add notat} that $a_{i}:=(a_{i,1},\ldots,a_{i,d})$
for $i\in\Lambda$. Note that,
\begin{equation}
\mu_{\lambda}^{\lambda_{1}^{k}I}=S_{\lambda^{k}}\mu_{\lambda}^{I}\text{ for }k\in\mathbb{Z}.\label{eq:mu^=00007Blambda_1^k I=00007D=00003D}
\end{equation}
Also note that $\mu_{\lambda}^{(0,1]}=\mu_{\lambda}$, and so $\mu_{\lambda}^{I}$
is a convolution factor of $\mu_{\lambda}$ when $I\subset(0,1]$.
That is, there exists $\nu\in\mathcal{M}(\mathbb{R}^{d})$ such that
$\mu_{\lambda}=\nu*\mu_{\lambda}^{I}$.

For $n\in\mathbb{Z}_{>0}$ we have $\mu_{\lambda}^{(n)}=\mu_{\lambda}^{(\lambda_{1}^{n},1]}$,
where $\mu_{\lambda}^{(n)}$ is defined in Section \ref{subsec:setup-and add notat}.
By an argument similar to the one given in the proof of \cite[Lemma 6.4]{Rap_SA_diag},
\begin{equation}
H\left(\mu_{\lambda}^{(n')},\mathcal{E}_{n}\right)=H\left(\mu_{\lambda},\mathcal{E}_{n}\right)+O(1)\text{ for }n'\ge n\ge1.\label{eq:fini-sum-entr}
\end{equation}

For $\mu\in\mathcal{M}(\mathbb{R}^{d})$ and $t_{1},t_{2}\in\mathbb{R}_{>0}$
we write
\[
H\left(\mu;t_{1}\right):=H\left(\mu;\lambda^{-\frac{1}{\chi_{1}}\log t_{1}}\right)\text{ and }H\left(\mu;t_{1}\mid t_{2}\right):=H\left(\mu;t_{1}\right)-H\left(\mu;t_{2}\right),
\]
where recall that $\lambda^{t}=(\lambda_{1}^{t},...,\lambda_{d}^{t})$
for $t\in\mathbb{R}$. By Lemma \ref{lem:cond avg ent is pos =000026 ub}
\begin{equation}
H\left(\mu;t_{2}\right)\le H\left(\mu;t_{1}\right)\quad\text{ whenever }t_{2}\ge t_{1},\label{eq: posi-bou-condi-entr}
\end{equation}
and there exists a constant $C=C(\lambda)>0$ such that 
\begin{equation}
H\left(\mu;t_{1}\mid t_{2}\right)\le C\log\left(t_{2}/t_{1}\right)\quad\text{ whenever }t_{2}/t_{1}\ge2.\label{eq:ub on cond avg ent}
\end{equation}
From (\ref{eq: posi-bou-condi-entr}) we obtain,
\begin{equation}
H\left(\mu;t_{1}\mid t_{2}\right)\le H\left(\mu;t_{1}'\mid t_{2}'\right)\text{ for intervals }(t_{1},t_{2})\subset(t_{1}',t_{2}')\subset\mathbb{R}_{>0}.\label{eq:ineq for intervals}
\end{equation}
By (\ref{eq:mu^=00007Blambda_1^k I=00007D=00003D}) and (\ref{eq:scaling rel of avg ent}),
for every bounded $I\subset\mathbb{R}_{>0}$ and $k\in\mathbb{Z}$,
\begin{equation}
H\left(\mu_{\lambda}^{\lambda_{1}^{k}I};\lambda_{1}^{k}t_{1}\mid\lambda_{1}^{k}t_{2}\right)=H\left(\mu_{\lambda}^{I};t_{1}\mid t_{2}\right).\label{eq: scaling-inva}
\end{equation}

We shall need the following lemma, which extends \cite[equation (2.8)]{BV-transcendent}
to higher dimensions.
\begin{lem}
\label{lem:H()>=00003D1/2 H()}Let $I_{1}\subset I_{2}$ be bounded
subsets of $\mathbb{R}_{>0}$, and let $t_{1},t_{2}\in\mathbb{R}_{>0}$
be with $t_{2}/t_{1}\ge2$. Then,
\[
H\left(\mu_{\lambda}^{I_{2}};t_{1}\mid t_{2}\right)\ge\frac{1}{2}H\left(\mu^{I_{1}};t_{1}\mid t_{2}\right).
\]
\end{lem}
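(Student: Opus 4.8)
The plan is to reduce the statement to a general inequality about convolutions, and then establish that inequality by passing to nearby scales whose ratio is an integer vector, where Lemma~\ref{lem:conv don't dec} becomes available.

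First I would use the product structure of $\mu_\lambda$. Since $\{\xi_n\}_{n\in\mathbb Z}$ is i.i.d.\ and $I_1\subseteq I_2$, the index sets $\{n:\lambda_1^n\in I_1\}$ and $\{n:\lambda_1^n\in I_2\setminus I_1\}$ are disjoint, so $\mu_\lambda^{I_2}=\mu_\lambda^{I_1}*\mu_\lambda^{I_2\setminus I_1}$. Setting $\nu:=\mu_\lambda^{I_2\setminus I_1}\in\mathcal M(\mathbb R^d)$ and recalling from Section~\ref{subsec:Convolution-factors} that $H(\theta;t_i)=H(\theta;\lambda^{-\frac1{\chi_1}\log t_i})$, we have $H(\theta;t_1\mid t_2)=H(\theta;r\mid r')$ for $r:=\lambda^{-\frac1{\chi_1}\log t_1}$ and $r':=\lambda^{-\frac1{\chi_1}\log t_2}$; since $t_2>t_1$ and $\chi_1\le\chi_j$ for all $j$, it follows that $r\le r'$ and $r'_j/r_j=(t_2/t_1)^{\chi_j/\chi_1}\ge t_2/t_1\ge2$ for every $1\le j\le d$. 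Thus it suffices to prove the general claim
\[
H(\nu*\mu;r\mid r')\ \ge\ \tfrac12\,H(\mu;r\mid r')\qquad\text{for all }\mu,\nu\in\mathcal M(\mathbb R^d)\text{ and }r\le r'\text{ in }\mathbb R_{>0}^d\text{ with }r'_j/r_j\ge2\text{ for all }j.
\]

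To prove the claim I would set $N:=(\lfloor r'_1/r_1\rfloor,\dots,\lfloor r'_d/r_d\rfloor)$, so that $N_j\ge2$ (hence $N\in\mathbb Z_{>0}^d$) and $Nr\le r'\le 2Nr$ coordinatewise. Using that average conditional entropy is additive over nested scales and nonnegative (Lemma~\ref{lem:cond avg ent is pos =000026 ub}), together with Lemma~\ref{lem:conv don't dec} applied to the integer ratio $Nr/r=N$, one obtains
\[
H(\nu*\mu;r\mid r')\ \ge\ H(\nu*\mu;r\mid Nr)\ \ge\ H(\mu;r\mid Nr)\ =\ H(\mu;r\mid r')-H(\mu;Nr\mid r').
\]
Since $r'\le 2Nr$, Lemma~\ref{lem:cond avg ent is pos =000026 ub} also gives $H(\mu;Nr\mid r')\le H(\mu;Nr\mid 2Nr)\le d\log 2$, and therefore $H(\nu*\mu;r\mid r')\ge H(\mu;r\mid r')-d\log 2$. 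This already proves the claim whenever $H(\mu;r\mid r')\ge 2d\log 2$. The remaining, bounded range $0<H(\mu;r\mid r')<2d\log2$ is the only real obstacle: it is precisely the slack produced by the ratio $r'/r$ not being integral, and it is what forces the factor $\frac12$. For this range I would argue as in the one-dimensional treatment of \cite[eq.~(2.8)]{BV-transcendent}, combining translation invariance and concavity of $H(\,\cdot\,;s)$ (via (\ref{eq:avg ent is trans inv}) and concavity of Shannon entropy, which yield $H(\nu*\mu;s)\ge H(\mu;s)$ for every scale $s$) with the decomposition lemmas of Section~\ref{subsec:Basic-properties-of avg ent}, in order to recover on $\nu*\mu$ the entropy that $\mu$ carries in the sub-dyadic window $[Nr,r']$ that the chain above discards. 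Once this bounded case is settled, combining it with the displayed chain completes the proof; the higher-dimensional setting enters only through the coordinatewise inequality $\chi_1\le\chi_j$ and through the $\mathbb R^d$ versions of the entropy lemmas already established in Section~\ref{sec:Preliminaries}.
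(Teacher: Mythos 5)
Your reduction to showing $H(\nu*\mu;r\mid r')\ge\frac12 H(\mu;r\mid r')$ is the right framing, and the chain through the bottom integer-ratio window $[r,Nr]$ is a natural first move, correctly yielding $H(\nu*\mu;r\mid r')\ge H(\mu;r\mid r')-d\log 2$. But this leaves a genuine gap: when $H(\mu;r\mid r')<2d\log 2$, your chain gives nothing usable. Worse, in this regime the entropy of $\mu$ can be concentrated entirely in the top sliver $[Nr,r']$ (which has multiplicative width at most $2$ in each coordinate, hence capacity up to $d\log 2$), so $H(\mu;r\mid Nr)$ could be $0$ and your bound degenerates completely. The sketch "I would argue as in \cite[eq.~(2.8)]{BV-transcendent} using translation invariance and concavity" is not a fix: those tools give $H(\nu*\mu;s)\ge H(\mu;s)$ at each single scale $s$, but that does not help you relate the conditional entropies $H(\mu;Nr\mid r')$ and $H(\nu*\mu;Nr\mid r')$ without an integer-ratio version of the comparison, which is precisely what is missing in the window $[Nr,r']$.

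The missing idea, which is where the factor $\frac12$ actually comes from, is to consider the top integer-ratio window as well. With $N_j=\lfloor r'_j/r_j\rfloor\ge2$ one also has $r/N$... rather, set $r'':=r'/N$, so the window $[r'',r']$ has integer ratio $N$ from above. Since $N_j\ge2$ implies $N_j+1\le N_j^2$ and hence $r'_j/r_j<N_j+1\le N_j^2$, one gets $r\le r''\le Nr\le r'$, so the two integer-ratio windows $[r,Nr]$ and $[r'',r']$ cover $[r,r']$. By Lemma \ref{lem:cond avg ent is pos =000026 ub} (monotonicity and nonnegativity), $H(\mu;r\mid Nr)+H(\mu;r''\mid r')\ge H(\mu;r\mid r')$, so at least one of the two is $\ge\frac12H(\mu;r\mid r')$. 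Now apply Lemma \ref{lem:conv don't dec} to whichever one it is (both have integer ratio $N$), and conclude by monotonicity that $H(\nu*\mu;r\mid r')$ dominates that term. This handles all values of $H(\mu;r\mid r')$, not just the large ones, and is essentially the argument in the paper. Your two-case split can in principle be completed this way, but without the top window the small-entropy case has no proof, so as written the proposal is incomplete.
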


\begin{proof}
For $i=1,2$ and $1\le j\le d$ set $r_{i,j}:=\lambda_{j}^{-\frac{1}{\chi_{1}}\log t_{i}}$
and write $r_{i}:=(r_{i,1},...,r_{i,d})$. The condition $t_{2}/t_{1}\ge2$
implies that
\begin{equation}
N_{j}:=\lfloor r_{2,j}/r_{1,j}\rfloor\ge2\text{ for each }1\le j\le d.\label{eq:N_j>=00003D2}
\end{equation}
Setting
\[
r_{1}':=\left(r_{1,1}N_{1},\ldots,r_{1,d}N_{d}\right)\text{ and }r_{2}':=\left(r_{2,1}/N_{1},\ldots,r_{2,d}/N_{d}\right),
\]
it follows easily from (\ref{eq:N_j>=00003D2}) that $r_{1}\le r_{2}'\le r_{1}'\le r_{2}$.
Thus by Lemma \ref{lem:cond avg ent is pos =000026 ub},
\[
H\left(\mu_{\lambda}^{I_{1}};r_{1}\mid r_{1}'\right)+H\left(\mu_{\lambda}^{I_{1}};r_{2}'\mid r_{2}\right)\ge H\left(\mu^{I_{1}};r_{1}\mid r_{2}\right)=H\left(\mu^{I_{1}};t_{1}\mid t_{2}\right).
\]

The last inequality implies that there exist $r,r'\in\mathbb{R}_{>0}^{d}$,
with $r'/r\in\mathbb{Z}_{>0}^{d}$, so that
\[
\lambda^{-\frac{1}{\chi_{1}}\log t_{1}}\le r\le r'\le\lambda^{-\frac{1}{\chi_{1}}\log t_{2}}\text{ and }H\left(\mu_{\lambda}^{I_{1}};r\mid r'\right)\ge\frac{1}{2}H\left(\mu^{I_{1}};t_{1}\mid t_{2}\right).
\]
Hence by Lemmas \ref{lem:cond avg ent is pos =000026 ub} and \ref{lem:conv don't dec},
\[
H\left(\mu_{\lambda}^{I_{2}};t_{1}\mid t_{2}\right)\ge H\left(\mu_{\lambda}^{I_{2}};r\mid r'\right)\ge H\left(\mu_{\lambda}^{I_{1}};r\mid r'\right)\ge\frac{1}{2}H\left(\mu^{I_{1}};t_{1}\mid t_{2}\right),
\]
which completes the proof of the lemma.
\end{proof}

\subsection{\label{subsec:Non-saturation-of conv factors}Non saturation of convolution
factors}

The following proposition is the main result of this subsection.
\begin{prop}
\label{prop:non-satur}Suppose that $\dim\mu_{\lambda}<d$ and $\dim\pi_{J}\mu_{\lambda}=|J|$
for each proper subset $J$ of $[d]$. Then there exist $\epsilon=\epsilon(\lambda)>0$
and $M=M(\lambda)\ge1$ such that $\mu_{\lambda}^{I}$ is $(\epsilon,m)$-non-saturated
for all $m\ge M$ and bounded $I\subset\mathbb{R}_{>0}$.
\end{prop}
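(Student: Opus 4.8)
The plan is to reduce the claim about convolution factors $\mu_\lambda^I$ to the single statement for $\mu_\lambda$ itself, and then to extract non-saturation of $\mu_\lambda$ from the assumed dimension drop together with full-dimensionality of all proper projections. For the reduction, recall from Section \ref{subsec:Convolution-factors} that when $I\subset(0,1]$ the measure $\mu_\lambda^I$ is a convolution factor of $\mu_\lambda$, i.e.\ $\mu_\lambda=\nu*\mu_\lambda^I$ for some $\nu\in\mathcal M(\mathbb R^d)$, and for general bounded $I\subset\mathbb R_{>0}$ the relation $\mu_\lambda^{\lambda_1^k I}=S_{\lambda^k}\mu_\lambda^I$ lets us rescale so that $I\subset(0,1]$; by (\ref{eq:commens part by S_=00007Blambda^k=00007D}) rescaling by $S_{\lambda^k}$ only shifts the index of the partitions $\mathcal E_n$, so it does not affect the non-saturation inequality (with a possibly larger but still uniform $M$). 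The key monotonicity input is Lemma \ref{lem:H()>=00003D1/2 H()}, or more precisely its partition-conditioned analogue built from Lemmas \ref{lem:cond avg ent is pos =000026 ub} and \ref{lem:conv don't dec}: conditional average entropy along the tube direction does not decrease under convolution, up to controlled error.

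The heart of the matter is therefore to prove the case $I=(0,1]$, i.e.\ that $\mu_\lambda$ is $(\epsilon,m)$-non-saturated for all large $m$. First I would recall from \cite{Rap_SA_diag} (or re-derive via the Ledrappier--Young formula for diagonal self-affine measures from \cite{FH-dimension}) the interpretation of the quantities
\[
\frac{1}{m}H\left(\mu_\lambda,\mathcal E_{n+m}\mid\mathcal E_n\vee\pi_{[d]\setminus\{j\}}^{-1}\mathcal E_{n+m}\right)
\]
as, in the limit $n\to\infty$ of a suitable average, the ``vertical'' entropy contribution in direction $e_j$. If this quantity were $\ge\chi_j$ for all $j$ and all scales, summing over $j$ and telescoping would force $\frac1n H(\mu_\lambda,\mathcal E_n)$ to have the maximal slope $\kappa=\chi_d\dim\mu_\lambda-\sum_{j=1}^{d-1}(\chi_d-\chi_j)$ consistent with the individual projections being of full dimension $|J|$, and combined with $\dim\pi_J\mu_\lambda=|J|$ for all proper $J$ this would push $\dim\mu_\lambda$ up to $\min\{d,\dim_L(\Phi^\lambda,p)\}$, contradicting $\dim\mu_\lambda<d$ (and the contradiction hypothesis). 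Turning this heuristic into the uniform-in-$n$ statement of Definition \ref{def:non-saturated-measure} is where care is needed: one uses the exact-dimensionality of $\mu_\lambda$ and of each $\pi_J\mu_\lambda$ to replace asymptotic averages by genuine scale-$n$ estimates with $o(m)$ error, and then picks $\epsilon>0$ smaller than the resulting strict gap and $M$ large enough to absorb the $O(1/m)$ errors.

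Concretely, I would argue by contradiction within the proof: suppose for every $\epsilon>0$ and $M\ge1$ there are $j$ and $n$ with $\frac1m H(\mu_\lambda,\mathcal E_{n+m}\mid\mathcal E_n\vee\pi_{[d]\setminus\{j\}}^{-1}\mathcal E_{n+m})\ge\chi_j-\epsilon$ for some $m\ge M$; sending $\epsilon\to0$, $M\to\infty$ and using a diagonal/compactness argument on the components $\mu_{\lambda,E}$ for $E\in\mathcal E_n$ (invoking the CP-chain or component-measure machinery of \cite{Ho1,Rap_SA_diag}), one produces a limiting component whose conditional entropy in direction $e_j$ is full, and then the Ledrappier--Young decomposition of $\dim\mu_\lambda$ into the sum of the entropy dimensions along the flag $\emptyset\subsetneq\{1\}\subsetneq\{1,2\}\subsetneq\cdots\subsetneq[d]$ — together with $\dim\pi_J\mu_\lambda=|J|$ for the proper initial segments — yields $\dim\mu_\lambda=\min\{d,\dim_L(\Phi^\lambda,p)\}\ge d$ or contradicts the standing hypothesis. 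The main obstacle is precisely making this limiting argument produce the \emph{uniform} bound over all scales $n$ simultaneously, rather than merely along a subsequence; here one exploits that $\mu_\lambda$ is a self-affine (in fact homogeneous, hence exactly scaling) measure, so its components at scale $n$ are, up to bounded translations, governed by finitely many ``patterns'', and the Ledrappier--Young identity holds at the level of the measure itself, not just in an averaged sense. Once that uniform gap $\chi_j-\dim$-contribution $\ge 2\epsilon$ is in hand for $\mu_\lambda$, the reduction of the first paragraph delivers the proposition for every bounded $I$.
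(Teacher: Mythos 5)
Your reduction of the statement for $\mu_\lambda^I$ to the statement for $\mu_\lambda$ itself is in the right spirit, but the tool you cite is not the one that does the work: Lemma \ref{lem:H()>=00003D1/2 H()} and Lemma \ref{lem:conv don't dec} concern the average entropies $H(\cdot\,;r\mid r')$, whereas the non-saturation inequality of Definition \ref{def:non-saturated-measure} involves the partitions $\mathcal{E}_{n+m}$, $\mathcal{E}_n\vee\pi_{[d]\setminus\{j\}}^{-1}\mathcal{E}_{n+m}$. The paper passes from $\mu_\lambda$ to $\mu_\lambda^I$ (for $I\subset(0,1]$) by writing $\mu_\lambda$ as an integral of translates $T_x\mu_\lambda^I$, then applying concavity of conditional entropy together with the translation commensurability (\ref{eq:commens part by T_x}); the extension to general bounded $I$ is then by $\mu_\lambda^{\lambda_1^{-k}I'}=S_{\lambda^{-k}}\mu_\lambda^{I'}$ and (\ref{eq:commens part by S_=00007Blambda^k=00007D}). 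So this step is recoverable, just misattributed.

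The genuine gap is in the core step, non-saturation of $\mu_\lambda$. You point in roughly the right direction ("dimension drop plus full projections must force a directional entropy deficit at all scales") and you correctly flag the uniformity-in-$n$ obstacle, but the resolution you sketch — diagonal/compactness on components $\mu_{\lambda,E}$, CP-chain machinery, and the claim that components are "governed by finitely many patterns" — is neither correct as stated (there is no finite pattern set for transcendental $\lambda$) nor what is needed. The paper's proof is a direct subtraction of two uniform-in-$n$ bounds, and you are missing both. The first is the \emph{upper} bound $\frac1m H(\mu_\lambda,\mathcal{E}_{n+m}\mid\mathcal{E}_n)<\sum_{l=1}^d\chi_l-\epsilon$ (Lemma \ref{lem:before non-sat prop}): its uniformity in $n$ comes not from a compactness argument on components but from the exact scaling identity $\mu_\lambda^{(0,\lambda_1^{km}]}=S_{\lambda^{km}}\mu_\lambda$, which propagates a single-scale violation to all later scales and thus forces the limit $\kappa=\lim\frac1n H(\mu_\lambda,\mathcal{E}_n)$ (Lemma \ref{lem:entr-dimens}) up to $\sum_l\chi_l$, contradicting $\dim\mu_\lambda<d$. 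The second is the \emph{lower} bound $\frac1m H(\mu_\lambda,\pi_{[d]\setminus\{j\}}^{-1}\mathcal{E}_{n+m}\mid\mathcal{E}_n)\ge\sum_{l\ne j}\chi_l-\epsilon/2$, which is precisely Lemma \ref{lem:full proj imply foll cond ent} (imported from \cite[Lemma 1.14]{Rap_SA_diag}) and is exactly the uniform scale-$n$ content of the hypothesis $\dim\pi_{[d]\setminus\{j\}}\mu_\lambda=d-1$. Subtracting these two and using the conditional-entropy chain rule yields $\frac1m H(\mu_\lambda,\mathcal{E}_{n+m}\mid\mathcal{E}_n\vee\pi_{[d]\setminus\{j\}}^{-1}\mathcal{E}_{n+m})\le\chi_j-\epsilon/2$ uniformly in $n$, which is the non-saturation of $\mu_\lambda$. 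Without the uniform projected-entropy lower bound of Lemma \ref{lem:full proj imply foll cond ent}, your sketch can only produce subsequential or averaged statements, which do not suffice for Definition \ref{def:non-saturated-measure}.
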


The proof requires some preparations. Recall the definition of $\kappa$
from section \ref{subsec:setup-and add notat}. The following lemma
is established in \cite[Lemma 4.1]{Rap_SA_diag}.
\begin{lem}
\label{lem:entr-dimens}Suppose that $\dim\pi_{[d-1]}\mu_{\lambda}=d-1$.
Then,
\[
\lim_{n\to\infty}\frac{1}{n}H(\mu_{\lambda},\mathcal{E}_{n})=\kappa.
\]
\end{lem}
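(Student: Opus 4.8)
The plan is to compute $\lim_{n}\frac{1}{n}H(\mu_{\lambda},\mathcal{E}_{n})$ by peeling off one coordinate at a time along the flag $\emptyset\subsetneq[1]\subsetneq\cdots\subsetneq[d]$, using exact dimensionality of $\mu_{\lambda}$ and of all its projections (known from \cite{FH-dimension}) together with the Ledrappier--Young formula for diagonal self-affine measures. For $1\le k\le d$ write $\mathcal{E}_{n}^{[k]}$ for the partition of $\mathbb{R}^{k}$ into boxes with side lengths $2^{-\lfloor\chi_{1}n\rfloor},\dots,2^{-\lfloor\chi_{k}n\rfloor}$, so that $\pi_{[k]}^{-1}\mathcal{E}_{n}^{[k]}=\bigvee_{j=1}^{k}\pi_{j}^{-1}\mathcal{D}_{\chi_{j}n}^{\mathbb{R}}$ and $\mathcal{E}_{n}=\pi_{[d]}^{-1}\mathcal{E}_{n}^{[d]}$; let $\mathcal{D}_{m}^{\mathbb{R}^{k}}$ be the conformal dyadic partition of $\mathbb{R}^{k}$ into cubes of side $2^{-m}$. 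Note first that the hypothesis $\dim\pi_{[d-1]}\mu_{\lambda}=d-1$ forces $\dim\pi_{[k]}\mu_{\lambda}=k$ for all $0\le k\le d-1$: from $H(\pi_{[d-1]}\mu_{\lambda},\mathcal{D}_{m}^{\mathbb{R}^{d-1}})=H(\pi_{[k]}\mu_{\lambda},\mathcal{D}_{m}^{\mathbb{R}^{k}})+H(\pi_{[d-1]}\mu_{\lambda},\mathcal{D}_{m}^{\mathbb{R}^{d-1}}\mid\pi_{[k]}^{-1}\mathcal{D}_{m}^{\mathbb{R}^{k}})$, the left side being $(d-1)m+o(m)$ and the last term at most $(d-1-k)m+O(1)$, we get $H(\pi_{[k]}\mu_{\lambda},\mathcal{D}_{m}^{\mathbb{R}^{k}})\ge km-o(m)$, while the reverse bound is trivial; dividing by $m$ and invoking exact dimensionality of $\pi_{[k]}\mu_{\lambda}$ gives the claim.

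The key auxiliary statement I would establish is: \emph{if $\nu$ is a diagonal self-affine measure on $\mathbb{R}^{k}$ (with contraction exponents $\chi_{1},\dots,\chi_{k}$) and $\dim\nu=k$, then $\frac{1}{n}H(\nu,\mathcal{E}_{n}^{[k]})\to\chi_{1}+\cdots+\chi_{k}$.} The proof is by induction on $k$. For $k=1$, $\mathcal{E}_{n}^{[1]}=\mathcal{D}_{\chi_{1}n}^{\mathbb{R}}$ and exact dimensionality gives $\frac{1}{n}H(\nu,\mathcal{D}_{\chi_{1}n}^{\mathbb{R}})\to\chi_{1}\dim\nu=\chi_{1}$. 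For the inductive step the chain rule gives $H(\nu,\mathcal{E}_{n}^{[k]})=H(\pi_{[k-1]}\nu,\mathcal{E}_{n}^{[k-1]})+H(\nu,\pi_{k}^{-1}\mathcal{D}_{\chi_{k}n}^{\mathbb{R}}\mid\pi_{[k-1]}^{-1}\mathcal{E}_{n}^{[k-1]})$. Since $\pi_{[k-1]}\nu$ is again a diagonal self-affine measure and, being a projection of the $k$-dimensional $\nu$, has full dimension $k-1$ (by the remark above), the first term equals $(\chi_{1}+\cdots+\chi_{k-1})n+o(n)$ by induction. For the second term: on the one hand it is $\le H(\pi_{k}\nu,\mathcal{D}_{\chi_{k}n}^{\mathbb{R}})\le\chi_{k}n+O(1)$, since $\pi_{k}\nu$ is a compactly supported measure on $\mathbb{R}$; on the other hand, refining the conditioning to the fibre $\sigma$-algebra $\pi_{[k-1]}^{-1}\mathcal{B}$ only decreases entropy, so it is $\ge\int H(\nu_{x},\mathcal{D}_{\chi_{k}n}^{\mathbb{R}})\,d(\pi_{[k-1]}\nu)(x)$, where $\nu_{x}$ is the conditional of $\nu$ on the $e_{k}$-fibre over $x$; by \cite{FH-dimension} these $\nu_{x}$ are exact dimensional of dimension $\dim\nu-\dim\pi_{[k-1]}\nu=1$ a.e.\ and supported in an $O(1)$-segment uniformly in $x$, so the integral is $\chi_{k}n+o(n)$ by dominated convergence. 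Hence the second term is $\chi_{k}n+o(n)$, completing the induction.

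Now apply the chain rule to $\mu_{\lambda}$: $H(\mu_{\lambda},\mathcal{E}_{n})=H(\pi_{[d-1]}\mu_{\lambda},\mathcal{E}_{n}^{[d-1]})+H(\mu_{\lambda},\pi_{d}^{-1}\mathcal{D}_{\chi_{d}n}^{\mathbb{R}}\mid\pi_{[d-1]}^{-1}\mathcal{E}_{n}^{[d-1]})$. By the auxiliary statement applied to $\pi_{[d-1]}\mu_{\lambda}$ (full-dimensional by hypothesis) the first term is $(\chi_{1}+\cdots+\chi_{d-1})n+o(n)$. For the second term, the fibre-conditioning lower bound above gives $H(\mu_{\lambda},\pi_{d}^{-1}\mathcal{D}_{\chi_{d}n}^{\mathbb{R}}\mid\pi_{[d-1]}^{-1}\mathcal{E}_{n}^{[d-1]})\ge\int H((\mu_{\lambda})_{x},\mathcal{D}_{\chi_{d}n}^{\mathbb{R}})\,d(\pi_{[d-1]}\mu_{\lambda})(x)=\chi_{d}\big(\dim\mu_{\lambda}-(d-1)\big)n+o(n)$, the $e_{d}$-fibre conditionals $(\mu_{\lambda})_{x}$ having dimension $\dim\mu_{\lambda}-\dim\pi_{[d-1]}\mu_{\lambda}=\dim\mu_{\lambda}-(d-1)$ a.e.\ by the Ledrappier--Young formula. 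Since $\kappa=(\chi_{1}+\cdots+\chi_{d-1})+\chi_{d}(\dim\mu_{\lambda}-(d-1))$, this yields $\liminf_{n}\frac{1}{n}H(\mu_{\lambda},\mathcal{E}_{n})\ge\kappa$; and when $\dim\mu_{\lambda}=d$ the trivial upper bound $H(\pi_{d}\mu_{\lambda},\mathcal{D}_{\chi_{d}n}^{\mathbb{R}})\le\chi_{d}n+O(1)$ matches, so the lemma follows in that case.

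The remaining work, and the main obstacle, is the matching upper bound for the second term when $\dim\mu_{\lambda}<d$, namely $H(\mu_{\lambda},\pi_{d}^{-1}\mathcal{D}_{\chi_{d}n}^{\mathbb{R}}\mid\pi_{[d-1]}^{-1}\mathcal{E}_{n}^{[d-1]})\le\chi_{d}(\dim\mu_{\lambda}-(d-1))n+o(n)$: one must gain the extra factor $\chi_{d}(d-\dim\mu_{\lambda})n$ over the trivial bound. What one has to show is that conditioning the base on the coarse non-conformal partition $\pi_{[d-1]}^{-1}\mathcal{E}_{n}^{[d-1]}$ instead of on the full base $\sigma$-algebra $\pi_{[d-1]}^{-1}\mathcal{B}$ increases the entropy of $\pi_{d}^{-1}\mathcal{D}_{\chi_{d}n}^{\mathbb{R}}$ by only $o(n)$, i.e.\ that $I\big(\pi_{d}^{-1}\mathcal{D}_{\chi_{d}n}^{\mathbb{R}}\,;\,\pi_{[d-1]}^{-1}\mathcal{B}\,\big|\,\pi_{[d-1]}^{-1}\mathcal{E}_{n}^{[d-1]}\big)=o(n)$. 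This is an instance of the continuity estimate for fibre conditional measures underlying the Feng--Hu proof of the Ledrappier--Young formula, adapted to the non-conformal base partition: using the self-affine relation $\mu_{\lambda}=\sum_{u\in\Lambda^{n}}p_{u}\,\varphi_{u}^{\lambda}\mu_{\lambda}$ one writes the restriction of $\mu_{\lambda}$ to a base box of $\mathcal{E}_{n}^{[d-1]}$ as a bounded-overlap superposition of rescaled copies of $\mu_{\lambda}$, and one uses a Hölder modulus of continuity for the map $x\mapsto(\mu_{\lambda})_{x}$ to conclude that, over a base box of diameter $O(2^{-\chi_{1}n})$, the fibre conditionals agree at scale $2^{-\chi_{d}n}$ up to an $o(n)$ error in entropy. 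Granting this, the second term is $\chi_{d}(\dim\mu_{\lambda}-(d-1))n+o(n)$, and combined with the first term we obtain $\frac{1}{n}H(\mu_{\lambda},\mathcal{E}_{n})\to\kappa$.
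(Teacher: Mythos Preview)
The paper does not give a self-contained proof of this lemma; it is quoted from \cite[Lemma~4.1]{Rap_SA_diag}. So the comparison is with the argument in that reference, whose overall shape is indeed the one you outline: decompose $H(\mu_\lambda,\mathcal{E}_n)$ along the flag of coordinate projections, use exact dimensionality for the conformal entropies, and exploit the hypothesis $\dim\pi_{[d-1]}\mu_\lambda=d-1$. Your auxiliary statement and your lower bound $\liminf_n\frac{1}{n}H(\mu_\lambda,\mathcal{E}_n)\ge\kappa$ are fine.

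The gap is the upper bound, and you have correctly identified where it is but not how to close it. Your proposed mechanism --- a ``H\"older modulus of continuity for the map $x\mapsto(\mu_\lambda)_x$'' --- does not hold in the sense you need: fibre conditionals of self-affine measures are in general highly discontinuous in the base point, so one cannot conclude that over a base box of diameter $O(2^{-\chi_1 n})$ the fibre measures agree at scale $2^{-\chi_d n}$ up to $o(n)$ in entropy. The self-affine decomposition $\mu_\lambda=\sum_{u\in\Lambda^n}p_u\,\varphi_u^\lambda\mu_\lambda$ that you invoke does not help directly either, because the cylinder images $\varphi_u^\lambda(\mathrm{supp}\,\mu_\lambda)$ have projections of size $O(\lambda_j^{\,n})$ in coordinate $j$, not $O(\lambda_1^{\,n})$, so a single $\mathcal{E}_n^{[d-1]}$-box can intersect many cylinders with no bounded-overlap control.

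A cleaner route (and closer to what is actually done in \cite{Rap_SA_diag}) avoids fibre regularity altogether: since $\mathcal{D}_{\chi_d n}^{\mathbb{R}^d}$ refines $\mathcal{E}_n$ and agrees with it in the $d$-th coordinate, one has
\[
H(\mu_\lambda,\mathcal{E}_n)=H\bigl(\mu_\lambda,\mathcal{D}_{\chi_d n}^{\mathbb{R}^d}\bigr)-H\bigl(\mu_\lambda,\pi_{[d-1]}^{-1}\mathcal{D}_{\chi_d n}^{\mathbb{R}^{d-1}}\mid\mathcal{E}_n\bigr).
\]
The first term is $\chi_d n\dim\mu_\lambda+o(n)$ by exact dimensionality. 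The upper bound on $H(\mu_\lambda,\mathcal{E}_n)$ therefore reduces to a \emph{lower} bound on the second term, i.e.\ to showing that inside a typical $\mathcal{E}_n$-cell the first $d-1$ coordinates carry full entropy down to scale $2^{-\chi_d n}$. This is a statement purely about the full-dimensional projection $\pi_{[d-1]}\mu_\lambda$ (not about fibre measures), and it is obtained by iterating the self-affine relation together with the full-dimension hypothesis --- the same mechanism that underlies Lemma~\ref{lem:full proj imply foll cond ent} in the present paper. Rewriting your upper-bound task in this form, and filling it in via that mechanism rather than via continuity of conditionals, is what remains to be done.
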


We use this to prove the following statement.
\begin{lem}
\label{lem:before non-sat prop}Suppose that $\dim\mu_{\lambda}<d$
and $\dim\pi_{[d-1]}\mu_{\lambda}=d-1$. Then there exists $\epsilon=\epsilon(\lambda)>0$
so that for all $m\ge M(\lambda)\ge1$ and $n\ge0$, 
\begin{equation}
\frac{1}{m}H(\mu_{\lambda},\mathcal{E}_{n+m}\mid\mathcal{E}_{n})<\sum_{j=1}^{d}\chi_{j}-\epsilon.\label{eq:above-boun}
\end{equation}
\end{lem}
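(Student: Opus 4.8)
The plan is to exploit the Ledrappier--Young type formula for diagonal self-affine measures together with Lemma \ref{lem:entr-dimens}. First, recall that by \cite[Lemma 4.1]{Rap_SA_diag} (Lemma \ref{lem:entr-dimens}), the hypothesis $\dim\pi_{[d-1]}\mu_{\lambda}=d-1$ gives $\frac1n H(\mu_{\lambda},\mathcal{E}_n)\to\kappa$, where $\kappa=\chi_d\dim\mu_{\lambda}-\sum_{j=1}^{d-1}(\chi_d-\chi_j)$. Since $\dim\mu_{\lambda}<d$, we have
\[
\kappa=\chi_d\dim\mu_{\lambda}-\sum_{j=1}^{d-1}(\chi_d-\chi_j)<\chi_d\cdot d-\sum_{j=1}^{d-1}(\chi_d-\chi_j)=\sum_{j=1}^{d}\chi_j.
\]
So $\kappa<\sum_{j=1}^d\chi_j$ strictly, and we may pick $\epsilon=\epsilon(\lambda)>0$ with $\kappa<\sum_{j=1}^d\chi_j-3\epsilon$, say.

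Next I would pass from the asymptotic statement about $\frac1n H(\mu_{\lambda},\mathcal{E}_n)$ to a statement about all the incremental quantities $\frac1m H(\mu_{\lambda},\mathcal{E}_{n+m}\mid\mathcal{E}_n)$. The mechanism is a standard subadditivity/averaging argument: by the chain rule, $H(\mu_{\lambda},\mathcal{E}_{n+m}\mid\mathcal{E}_n)=H(\mu_{\lambda},\mathcal{E}_{n+m})-H(\mu_{\lambda},\mathcal{E}_n)$ (using that $\mathcal{E}_n$ refines to $\mathcal{E}_{n+m}$ up to $O(1)$ commensurability, so $H(\mu_\lambda,\mathcal{E}_n\vee\mathcal{E}_{n+m})=H(\mu_\lambda,\mathcal{E}_{n+m})+O(1)$). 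I would also record the a priori upper bound $H(\mu_{\lambda},\mathcal{E}_{n+m}\mid\mathcal{E}_n)\le m\sum_{j=1}^d\chi_j+O(1)$, valid for every $n$ and $m$, which comes from $H(\mu,\mathcal{E}_{n+m}\mid\mathcal{E}_n)\le\log\#\{E'\in\mathcal{E}_{n+m}: E'\cap E\ne\emptyset\text{ for a fixed }E\in\mathcal{E}_n\}$ per component, together with $\mathrm{supp}(\mu_\lambda)$ being bounded so only $O(1)$-many columns in each direction are hit — actually the clean bound is $H(\mu,\mathcal{E}_{n+m}\mid\mathcal{E}_n)\le\sum_{j=1}^d m\chi_j+O(1)$ by comparing with dyadic partitions via Lemma \ref{lem:cond avg ent is pos =000026 ub} and (\ref{eq:two defs of ent are equiv}). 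Then for fixed $m$ and varying $n$, telescoping $H(\mu_\lambda,\mathcal{E}_{km})=\sum_{i=0}^{k-1}H(\mu_\lambda,\mathcal{E}_{(i+1)m}\mid\mathcal{E}_{im})+O(1)$ and dividing by $km$: the left side tends to $\kappa$; if for a positive-density set of indices $i$ the increment $\frac1m H(\mu_\lambda,\mathcal{E}_{(i+1)m}\mid\mathcal{E}_{im})$ were $\ge\sum_j\chi_j-\epsilon$ while the remaining increments are $\le\sum_j\chi_j+O(1/m)$, one gets a contradiction with the limit being $\kappa<\sum_j\chi_j-3\epsilon$ once $m$ is large. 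A cleaner route, which I would actually use: set $g(n):=H(\mu_\lambda,\mathcal{E}_n)$; then $g$ is $O(1)$-subadditive-shifted, i.e. $g(n+m)\le g(n)+g(m)+O(1)$ is false in general, so instead use that $n\mapsto g(n)$ has increments bounded above by $\sum_j\chi_j\cdot m+O(1)$ over blocks of length $m$ and $\lim g(n)/n=\kappa$; a Cesàro/pigeonhole argument then forces, for every fixed sufficiently large $m$ and \emph{every} $n$, that $g(n+m)-g(n)<m(\sum_j\chi_j-\epsilon)+O(1)$ — otherwise, iterating the block bound from that bad $n$ onward would keep the running average above $\sum_j\chi_j-2\epsilon>\kappa$, contradicting $g(n)/n\to\kappa$. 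Choosing $M=M(\lambda)$ large enough to absorb the $O(1)$ term into the remaining $\epsilon$-slack gives (\ref{eq:above-boun}).

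Concretely, the argument for the uniform-in-$n$ bound goes: fix $m\ge M$; suppose for contradiction there is $n_0$ with $g(n_0+m)-g(n_0)\ge m(\sum_j\chi_j-\epsilon)-C_0$ where $C_0=O(1)$. For $n\ge n_0$ write $n=n_0+km+r$ with $0\le r<m$; using the block upper bound $g(n_0+(i+1)m)-g(n_0+im)\le m\sum_j\chi_j+C_0$ is too weak by itself, so instead one uses the key monotonicity that the increments $g(n+m)-g(n)$ are themselves non-increasing in $n$ up to $O(1)$ — this is exactly concavity of $n\mapsto H(\mu_\lambda,\mathcal{E}_n)$ modulo bounded error, which follows from the submodularity/conditional-entropy inequality $H(\mu,\mathcal{E}_{n+m}\mid\mathcal{E}_n)\ge H(\mu,\mathcal{E}_{n+2m}\mid\mathcal{E}_{n+m})-O(1)$ applied along a disintegration into $n$-components (each $n$-component of $\mu_\lambda$ is, after rescaling by $S_{\lambda^{-n}}$, again comparable to $\mu_\lambda$ up to the convolution structure; more robustly, concavity modulo $O(1)$ of $H(\theta,\mathcal{E}_\bullet)$ holds for any fixed compactly supported $\theta$ by a standard argument). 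Granting this quasi-concavity, if $g(n_0+m)-g(n_0)\ge m(\sum_j\chi_j-\epsilon)-C_0$ then the same lower bound (minus another $O(1)$) holds for all $n\le n_0$, hence $g(n_0)\ge g(0)+\frac{n_0}{m}\big(m(\sum_j\chi_j-\epsilon)-2C_0\big)=n_0(\sum_j\chi_j-\epsilon)-O(n_0/m)$, so $\liminf g(n)/n\ge\sum_j\chi_j-\epsilon-O(1/m)>\kappa$ once $m$ is large — contradicting Lemma \ref{lem:entr-dimens}. Therefore no such $n_0$ exists, which is precisely (\ref{eq:above-boun}) after relabelling $\epsilon$.

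The main obstacle is establishing the quasi-concavity of $n\mapsto H(\mu_\lambda,\mathcal{E}_n)$ with a controlled $O(1)$ error that does not accumulate when summed over the $\sim n_0/m$ steps — one needs the error in $H(\mu,\mathcal{E}_{n+m}\mid\mathcal{E}_n)\ge H(\mu,\mathcal{E}_{n+2m}\mid\mathcal{E}_{n+m})-O(1)$ to be independent of $n$. This comes from writing $H(\mu_\lambda,\mathcal{E}_{n+2m}\mid\mathcal{E}_n)$ in two ways via the chain rule, using the concavity of conditional entropy under the averaging over $n$-components of $\mu_\lambda$, and noting that an $n$-component of $\mu_\lambda$, rescaled, is a measure whose $\mathcal{E}_m$-entropy relative to $\mathcal{E}_0$ is bounded by that of a fixed family (because $\mu_\lambda$ has bounded support and its components are push-forwards of $\mu_\lambda$ under maps with bounded distortion in the $\mathcal{E}_\bullet$ scale). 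If this turns out to be delicate, an alternative is to cite the analogous subadditivity estimates already developed in \cite{Rap_SA_diag} for the partitions $\mathcal{E}_n$ and for convolution factors, and to run the averaging argument there; since the current paper's Section \ref{subsec:Convolution-factors} already imports such estimates (e.g. (\ref{eq:fini-sum-entr})), invoking them here is legitimate. In any case, the inequality $\kappa<\sum_{j=1}^d\chi_j$ coming from $\dim\mu_\lambda<d$ is the source of the strict gap, and everything else is bookkeeping to propagate that gap from the asymptotic average down to every window of length $m\ge M$.
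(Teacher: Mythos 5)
Your reduction of the problem to the gap $\kappa<\sum_{j=1}^d\chi_j$, the choice of $\epsilon$, and the chain-rule/Ces\`aro bookkeeping are all fine and match the opening of the paper's proof. The gap is in the propagation mechanism, and it is not a technicality: the key inequality you build the argument on is unsubstantiated, your fallback justification for it is false, and its direction is in fact the opposite of what the self-affine structure gives you.

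First, the fallback. You write that ``concavity modulo $O(1)$ of $H(\theta,\mathcal{E}_\bullet)$ holds for any fixed compactly supported $\theta$ by a standard argument.'' This is false. Take $d=1$ and $\theta$ uniform on the $2^n$ points $2^{-n}+j2^{-2n}$, $0\le j<2^n$, together with the point $0$. Then $H(\theta,\mathcal{D}_k^{\mathbb{R}})$ is $O(1)$ for $k\le n$ and grows linearly (slope $\approx\log 2$) for $n\le k\le 2n$: the increments jump from $0$ to $\log 2$, violating quasi-concavity with any fixed $O(1)$ error. Entropy along a refining filtration has no generic concavity in the scale parameter; whatever monotonicity one has must come from the structure of $\mu_\lambda$.

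Second, the structural justification you offer --- that an $n$-component of $\mu_\lambda$, rescaled, is ``again comparable to $\mu_\lambda$'' --- is not correct either. Restrictions $\mu_{\lambda,E}$ to cells $E\in\mathcal{E}_n$ are not rescaled copies of $\mu_\lambda$; the components of a self-affine measure are in general quite different from the measure itself. The structure that actually helps is the \emph{convolution} structure: $\mu_\lambda=\mu_\lambda^{(\lambda_1^{km},1]}*\mu_\lambda^{(0,\lambda_1^{km}]}$ with $\mu_\lambda^{(0,\lambda_1^{km}]}=S_{\lambda^{km}}\mu_\lambda$ by (\ref{eq:mu^=00007Blambda_1^k I=00007D=00003D}), so $\mu_\lambda$ is a convex combination of translates of $S_{\lambda^{km}}\mu_\lambda$. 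Applying concavity of \emph{conditional entropy in the measure} (not concavity of $n\mapsto H(\mu_\lambda,\mathcal{E}_n)$) together with (\ref{eq:commens part by T_x}) and (\ref{eq:commens part by S_=00007Blambda^k=00007D}) yields, for every $k\ge 0$,
\[
H\left(\mu_{\lambda},\mathcal{E}_{n+(k+1)m}\mid\mathcal{E}_{n+km}\right)\;\ge\;H\left(S_{\lambda^{km}}\mu_\lambda,\mathcal{E}_{n+(k+1)m}\mid\mathcal{E}_{n+km}\right)-O(1)\;=\;H\left(\mu_{\lambda},\mathcal{E}_{n+m}\mid\mathcal{E}_{n}\right)-O(1).
\]
Note the direction: along the progression $n,n+m,n+2m,\ldots$ the increments are quasi-\emph{non-decreasing}, which is the \emph{opposite} of the quasi-concavity you posited. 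With this inequality one propagates the bad window $[n,n+m]$ \emph{forward}: if $\frac{1}{m}H(\mu_\lambda,\mathcal{E}_{n+m}\mid\mathcal{E}_n)\ge\sum_j\chi_j-\epsilon$, then every $\frac{1}{m}H(\mu_\lambda,\mathcal{E}_{n+(k+1)m}\mid\mathcal{E}_{n+km})\ge\sum_j\chi_j-2\epsilon$, hence averaging over $k$ and letting $\ell\to\infty$ gives $\kappa\ge\sum_j\chi_j-2\epsilon$, contradicting the choice of $\epsilon$. This works uniformly for every $n\ge 0$, including small $n$.

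Your backward propagation has the additional, independent flaw that it cannot handle small $n_0$: even granting quasi-concavity, the conclusion is a lower bound on $g(n_0)/n_0$ for the single bad index $n_0$, which is not a lower bound on $\liminf_n g(n)/n$ and contradicts nothing when $n_0$ is bounded. The lemma must hold for all $n\ge 0$, so you need a mechanism that turns a bad window at a small $n$ into a contradiction --- forward propagation does this, backward propagation does not. I'd recommend rebuilding the proof around the identity $\mu_\lambda^{(0,\lambda_1^{km}]}=S_{\lambda^{km}}\mu_\lambda$ and (\ref{eq:commens part by S_=00007Blambda^k=00007D}), which is exactly where the self-affinity enters.
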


\begin{proof}
By the definition of $\kappa$ and since $\dim\mu_{\lambda}<d$ we
have $\sum_{j=1}^{d}\chi_{j}>\kappa$. Let $0<\epsilon<\frac{1}{3}\left(\sum_{j=1}^{d}\chi_{j}-\kappa\right)$,
let $m\in\mathbb{Z}_{>0}$ be large with respect to $\epsilon$, and
let $n\ge0$ be given. Assume by contradiction that (\ref{eq:above-boun})
fails to hold. Thus, from the concavity of conditional entropy, by
(\ref{eq:mu^=00007Blambda_1^k I=00007D=00003D}) and (\ref{eq:commens part by S_=00007Blambda^k=00007D}),
and since $\mu_{\lambda}=\mu_{\lambda}^{(0,1]}$, it follows that
for each $k\in\mathbb{Z}_{\ge0}$
\[
\begin{split}\frac{1}{m}H(\mu_{\lambda}, & \mathcal{E}_{n+(k+1)m}\mid\mathcal{E}_{n+km})\ge\frac{1}{m}H\left(\mu_{\lambda}^{(0,\lambda_{1}^{km}]},\mathcal{E}_{n+(k+1)m}\mid\mathcal{E}_{n+km}\right)+O\left(\frac{1}{m}\right)\\
 & =\frac{1}{m}H\left(\mu_{\lambda},\mathcal{E}_{n+m}\mid\mathcal{E}_{n}\right)+O\left(\frac{1}{m}\right)\ge\sum_{j=1}^{d}\chi_{j}-2\epsilon.
\end{split}
\]
Combining this with Lemma \ref{lem:entr-dimens}, we obtain
\[
\begin{split}\kappa & =\lim_{\ell\to\infty}\frac{1}{\ell m}H(\mu_{\lambda},\mathcal{E}_{\ell m})=\lim_{\ell\to\infty}\frac{1}{\ell m}H(\mu_{\lambda},\mathcal{E}_{\ell m+n}\mid\mathcal{E}_{n})\\
 & =\lim_{\ell\to\infty}\frac{1}{\ell}\sum_{k=0}^{\ell-1}\frac{1}{m}H(\mu_{\lambda},\mathcal{E}_{n+(k+1)m}\mid\mathcal{E}_{n+km})\ge\sum_{j=1}^{d}\chi_{j}-2\epsilon.
\end{split}
\]
This contradicts the choice of $\epsilon$, which completes the proof
of the lemma.
\end{proof}
We also need the following lemma, which is established in \cite[Lemma 1.14]{Rap_SA_diag}.
\begin{lem}
\label{lem:full proj imply foll cond ent}Let $J\subset[d]$ be with
$\dim\pi_{J}\mu_{\lambda}=|J|$. Then for all $\epsilon>0$, $m\ge M(\lambda,\epsilon)\ge1$
and $n\ge0$,
\[
\frac{1}{m}H\left(\mu_{\lambda},\pi_{J}^{-1}\mathcal{E}_{n+m}\mid\mathcal{E}_{n}\right)\ge\sum_{j\in J}\chi_{j}-\epsilon.
\]
\end{lem}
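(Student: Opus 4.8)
As this statement is \cite[Lemma 1.14]{Rap_SA_diag}, I only sketch the idea. The plan has two steps: first, bound $H\left(\mu_{\lambda},\pi_{J}^{-1}\mathcal{E}_{n+m}\mid\mathcal{E}_{n}\right)$ from below, uniformly in $n$, by the unconditioned quantity $H\left(\mu_{\lambda},\pi_{J}^{-1}\mathcal{E}_{m}\right)$; second, identify the growth rate of the latter from the hypothesis $\dim\pi_{J}\mu_{\lambda}=|J|$ together with Lemma \ref{lem:entr-dimens}.

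For the first step I would use the homogeneous self-affine structure. Fix $n\ge0$ and write $\mu_{\lambda}=\sum_{u\in\Lambda^{n}}p_{u}\,\varphi_{u}^{\lambda}\mu_{\lambda}$. Since $\varphi_{u}^{\lambda}(x)=S_{\lambda^{n}}x+\varphi_{u}^{\lambda}(0)$, each $\varphi_{u}^{\lambda}\mu_{\lambda}=T_{\varphi_{u}^{\lambda}(0)}S_{\lambda^{n}}\mu_{\lambda}$ is supported on a translate of $S_{\lambda^{n}}K_{\Phi}$, whose side lengths are comparable with those of an atom of $\mathcal{E}_{n}$; in particular $\varphi_{u}^{\lambda}\mu_{\lambda}$ meets only $O(1)$ atoms of $\mathcal{E}_{n}$, so $H\left(\varphi_{u}^{\lambda}\mu_{\lambda},\mathcal{E}_{n}\right)=O(1)$. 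Now, using in turn the concavity of $\theta\mapsto H\left(\theta,\mathcal{C}\mid\mathcal{D}\right)$, the inequality $H\left(\theta,\mathcal{C}\mid\mathcal{D}\right)\ge H\left(\theta,\mathcal{C}\right)-H\left(\theta,\mathcal{D}\right)$, translation-invariance up to commensurability as in (\ref{eq:commens part by T_x}), and the scaling-commensurability (\ref{eq:commens part by S_=00007Blambda^k=00007D}) with $k=-n$ (so that $S_{\lambda^{-n}}\pi_{J}^{-1}\mathcal{E}_{n+m}$ is $O(1)$-commensurable with $\pi_{J}^{-1}\mathcal{E}_{m}$), one obtains a constant $C=C(\lambda)$, independent of $n$ and $m$, with
\[
H\left(\mu_{\lambda},\pi_{J}^{-1}\mathcal{E}_{n+m}\mid\mathcal{E}_{n}\right)\ \ge\ H\left(\mu_{\lambda},\pi_{J}^{-1}\mathcal{E}_{m}\right)-C\qquad\text{for all }n,m\ge0.
\]

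For the second step, $\pi_{J}\mu_{\lambda}$ is (an embedded copy of) the self-affine measure attached to $\Phi_{J}^{\lambda}$ and $p$, and $H\left(\mu_{\lambda},\pi_{J}^{-1}\mathcal{E}_{m}\right)$ is by definition the entropy of $\pi_{J}\mu_{\lambda}$ relative to the level-$m$ non-conformal partition of $\mathbb{R}^{|J|}$. Hence Lemma \ref{lem:entr-dimens}, applied to $\pi_{J}\mu_{\lambda}$ in dimension $|J|$, gives $\lim_{m}\frac{1}{m}H\left(\mu_{\lambda},\pi_{J}^{-1}\mathcal{E}_{m}\right)=\kappa_{J}$, where, writing $j^{\ast}:=\max J$, $\kappa_{J}:=\chi_{j^{\ast}}\dim\pi_{J}\mu_{\lambda}-\sum_{j\in J,\,j\neq j^{\ast}}(\chi_{j^{\ast}}-\chi_{j})$. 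The hypothesis of that lemma, namely that the projection of $\pi_{J}\mu_{\lambda}$ dropping its slowest-contracting coordinate has full dimension $|J|-1$, reads $\dim\pi_{J\setminus\{j^{\ast}\}}\mu_{\lambda}=|J|-1$, which is automatic since $\dim\pi_{J}\mu_{\lambda}=|J|$ and an orthogonal projection lowers dimension by at most the dimension of its kernel. Substituting $\dim\pi_{J}\mu_{\lambda}=|J|$ into the formula for $\kappa_{J}$ yields $\kappa_{J}=\sum_{j\in J}\chi_{j}$. Combining this with the displayed inequality, for every $\epsilon>0$ there is $M=M(\lambda,\epsilon)$ so that $\frac{1}{m}H\left(\mu_{\lambda},\pi_{J}^{-1}\mathcal{E}_{n+m}\mid\mathcal{E}_{n}\right)\ge\sum_{j\in J}\chi_{j}-\epsilon$ for all $n\ge0$ and $m\ge M$, as claimed.

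I expect the only genuine content to be the identity $\kappa_{J}=\sum_{j\in J}\chi_{j}$, i.e.\ the non-conformal dimension--entropy relation packaged in Lemma \ref{lem:entr-dimens} (ultimately the Ledrappier--Young formula for diagonal self-affine measures of \cite{FH-dimension}). The reduction in the first step is elementary; the one point to watch is that every $O(1)$ term appearing there is uniform in $n$, $m$ and $u$, which holds because each of them depends only on $\mathrm{diam}(K_{\Phi})$ and on the commensurability constants determined by $\lambda$.
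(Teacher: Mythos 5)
The paper itself offers no proof of this lemma: it is invoked directly as \cite[Lemma 1.14]{Rap_SA_diag}, so there is no in-paper argument to compare against. Your self-contained sketch is nonetheless correct and amounts to a reasonable reconstruction of how the result is obtained there.

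Two points deserve a little more care than you give them. First, Lemma \ref{lem:entr-dimens} as stated in the paper is only for $\mu_{\lambda}$ on $\mathbb{R}^{d}$; applying it to $\pi_{J}\mu_{\lambda}$ on $\mathbb{R}^{|J|}$ is legitimate because the underlying \cite[Lemma 4.1]{Rap_SA_diag} holds for any homogeneous diagonal self-affine measure, but this should be said explicitly, together with the observation you do make that $H\left(\mu_{\lambda},\pi_{J}^{-1}\mathcal{E}_{m}\right)$ is exactly the entropy of $\pi_{J}\mu_{\lambda}$ with respect to the level-$m$ non-conformal partition of $\mathbb{R}^{J}$ determined by $(\lambda_{j})_{j\in J}$. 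Second, the claim that $\dim\pi_{J\setminus\{j^{\ast}\}}\mu_{\lambda}=|J|-1$ is "automatic" because a codimension-one orthogonal projection lowers dimension by at most one is a statement about lower Hausdorff dimension of measures, and it only gives exact dimension because $\pi_{J\setminus\{j^{\ast}\}}\mu_{\lambda}$ is itself a self-affine measure, hence exact dimensional by \cite{FH-dimension}; phrased without that last fact, the step would not close. With those two clarifications, the first reduction (concavity of conditional entropy applied to the level-$n$ self-affine decomposition, together with (\ref{eq:commens part by T_x}), (\ref{eq:commens part by S_=00007Blambda^k=00007D}), and the chain rule inequality $H(\theta,\mathcal{C}\mid\mathcal{D})\ge H(\theta,\mathcal{C})-H(\theta,\mathcal{D})$, all with constants uniform in $n,m,u$) and the arithmetic identity $\kappa_{J}=\sum_{j\in J}\chi_{j}$ are both correct, and the conclusion follows.
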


\begin{proof}[Proof of Proposition \ref{prop:non-satur}]
Let $\epsilon>0$ be as obtained in Lemma \ref{lem:before non-sat prop},
let $m\in\mathbb{Z}_{>0}$ be large with respect to $\epsilon$, and
let $1\le j\le d$ and $n\in\mathbb{Z}_{\ge0}$ be given. Since $\dim\pi_{[d]\setminus\{j\}}\mu_{\lambda}=d-1$
and by Lemma \ref{lem:full proj imply foll cond ent},
\[
\frac{1}{m}H\left(\mu_{\lambda},\pi_{[d]\setminus\{j\}}^{-1}\mathcal{E}_{n+m}\mid\mathcal{E}_{n}\right)\ge\sum_{l=1}^{d}\chi_{l}-\chi_{j}-\epsilon/2.
\]
From this and by Lemma \ref{lem:before non-sat prop},
\begin{multline*}
\frac{1}{m}H\left(\mu_{\lambda},\mathcal{E}_{n+m}\mid\mathcal{E}_{n}\vee\pi_{[d]\setminus\{j\}}^{-1}\mathcal{E}_{n+m}\right)\\
=\frac{1}{m}H\left(\mu_{\lambda},\mathcal{E}_{n+m}\mid\mathcal{E}_{n}\right)-\frac{1}{m}H\left(\mu_{\lambda},\pi_{[d]\setminus\{j\}}^{-1}\mathcal{E}_{n+m}\mid\mathcal{E}_{n}\right)\le\chi_{j}-\epsilon/2.
\end{multline*}
From the last inequality and by concavity,
\[
\frac{1}{m}H\left(\mu_{\lambda}^{I},\mathcal{E}_{n+m}\mid\mathcal{E}_{n}\vee\pi_{[d]\setminus\{j\}}^{-1}\mathcal{E}_{n+m}\right)\le\chi_{j}-\epsilon/2\text{ for all }I\subset(0,1]\text{ and }n\ge0.
\]

Note that given a bounded $I\subset\mathbb{R}_{>0}$, there exist
$k\ge1$ and $I'\subset(0,1]$ so that $I=\lambda_{1}^{-k}I'$. The
proposition now follows from this, from the last formula, and by (\ref{eq:mu^=00007Blambda_1^k I=00007D=00003D})
and (\ref{eq:commens part by S_=00007Blambda^k=00007D}).
\end{proof}
We end this subsection with the following corollary, which follows
directly from Theorem \ref{thm:effective ent inc result} and Proposition
\ref{prop:non-satur}.
\begin{cor}
\label{cor:ent inc for conv factors}Suppose that $\dim\mu_{\lambda}<d$
and $\dim\pi_{J}\mu_{\lambda}=|J|$ for each proper subset $J$ of
$[d]$. Then there exists $C=C(\lambda)>1$ such that the following
holds. Let $\nu\in\mathcal{M}(\mathbb{R}^{d})$, $0<\beta<1/2$ and
$t_{2}>t_{1}>0$ be with $\frac{1}{t_{2}-t_{1}}H(\nu;2^{-t_{2}}\mid2^{-t_{1}})>\beta$.
Then for all bounded $I\subset\mathbb{R}_{>0}$,
\[
H\left(\nu*\mu_{\lambda}^{I};2^{-t_{2}}\mid2^{-t_{1}}\right)\ge H\left(\mu_{\lambda}^{I};2^{-t_{2}}\mid2^{-t_{1}}\right)+C^{-1}\beta\left(\log\beta^{-1}\right)^{-1}(t_{2}-t_{1})-C.
\]
\end{cor}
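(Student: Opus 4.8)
The plan is to deduce the corollary directly from Theorem \ref{thm:effective ent inc result}, taking $\mu=\mu_\lambda^I$, using Proposition \ref{prop:non-satur} to supply the non-saturation hypothesis, and then reconciling the two conventions for average conditional entropy used in the paper.

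First, since $\dim\mu_\lambda<d$ and $\dim\pi_J\mu_\lambda=|J|$ for every proper $J\subset[d]$, Proposition \ref{prop:non-satur} yields $\epsilon=\epsilon(\lambda)>0$ and $M=M(\lambda)\ge1$ so that $\mu_\lambda^I$ is $(\epsilon,m)$-non-saturated for all $m\ge M$ and all bounded $I\subset\mathbb{R}_{>0}$. Let $C_0=C_0(\lambda,\epsilon,M)=C_0(\lambda)>1$ be the constant produced by Theorem \ref{thm:effective ent inc result} for these $\epsilon$ and $M$.

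Next I would recast the statement in the notation of Theorem \ref{thm:effective ent inc result}. Recall from Section \ref{subsec:Convolution-factors} that, since $\log$ has base $2$ and $\lambda_1^{t/\chi_1}=2^{-t}$, we have $H(\theta;2^{-t})=H(\theta;\lambda^{t/\chi_1})$ for every $\theta\in\mathcal{M}(\mathbb{R}^d)$ and $t>0$. Writing $t_i':=t_i/\chi_1$, $c_0:=\min\{\chi_1,1\}\in(0,1]$ and $\beta':=c_0\beta\in(0,1/2)$, it follows that $H(\theta;2^{-t_2}\mid2^{-t_1})=H(\theta;\lambda^{t_2'}\mid\lambda^{t_1'})$, that $t_2'-t_1'=(t_2-t_1)/\chi_1$, and that the hypothesis $\frac{1}{t_2-t_1}H(\nu;2^{-t_2}\mid2^{-t_1})>\beta$ becomes $\frac{1}{t_2'-t_1'}H(\nu;\lambda^{t_2'}\mid\lambda^{t_1'})>\chi_1\beta\ge\beta'$. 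Applying Theorem \ref{thm:effective ent inc result} to $\mu_\lambda^I$, $\nu$, $\beta'$ and the scales $t_1',t_2'$ gives
\[
H\big(\nu*\mu_\lambda^I;\lambda^{t_2'}\mid\lambda^{t_1'}\big)\ge H\big(\mu_\lambda^I;\lambda^{t_2'}\mid\lambda^{t_1'}\big)+C_0^{-1}\beta'\big(\log(\beta')^{-1}\big)^{-1}(t_2'-t_1')-C_0.
\]

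Finally I would translate back and absorb the $\chi_1$-dependent factors into the constant. Since $\log\beta^{-1}>1$, we have $\log(\beta')^{-1}=\log c_0^{-1}+\log\beta^{-1}\le(\log c_0^{-1}+1)\log\beta^{-1}$, hence $\beta'\big(\log(\beta')^{-1}\big)^{-1}\ge\frac{c_0}{\log c_0^{-1}+1}\,\beta\big(\log\beta^{-1}\big)^{-1}$. Substituting this, $t_2'-t_1'=(t_2-t_1)/\chi_1$, and $H(\theta;\lambda^{t_i'})=H(\theta;2^{-t_i})$ into the displayed inequality yields the corollary with $C:=C_0\cdot\max\{1,\chi_1(\log c_0^{-1}+1)/c_0\}$, which depends only on $\lambda$. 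There is no genuine obstacle here: all the substance lies in Theorem \ref{thm:effective ent inc result} and Proposition \ref{prop:non-satur}, and the only point needing care is the passage between the scale conventions $2^{-t}$ and $\lambda^{t}$, with the attendant harmless rescaling of $\beta$ and of the multiplicative and additive constants.
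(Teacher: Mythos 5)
Your proof is correct and is exactly the route the paper takes: the paper states that the corollary "follows directly from Theorem \ref{thm:effective ent inc result} and Proposition \ref{prop:non-satur}," and your argument simply spells out the scale conversion $H(\theta;2^{-t})=H(\theta;\lambda^{t/\chi_1})$ (which is the convention introduced at the start of Section \ref{subsec:Convolution-factors}), replaces $\beta$ by $\beta'=\min\{\chi_1,1\}\beta<1/2$, and absorbs the $\chi_1$-dependent distortion into the constant $C$, all of which is routine and right.
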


\subsection{\label{subsec:Increasing-entropy-of conv}Increasing entropy of convolutions}

The following proposition plays a key role in the proof of Theorem
\ref{thm:alg approx}. We have Corollary \ref{cor:ent inc for conv factors},
properties (\ref{eq:ub on cond avg ent}), (\ref{eq:ineq for intervals}),
and (\ref{eq: scaling-inva}), and Lemma \ref{lem:H()>=00003D1/2 H()}
at our disposal. With these, the proof of the proposition is almost
identical to the proof of \cite[Proposition 30]{BV-transcendent},
which deals with the case $d=1$. Therefore, the proof is omitted.
\begin{prop}
\label{pro:A.3}Suppose that $\dim\mu_{\lambda}<d$ and $\dim\pi_{J}\mu_{\lambda}=|J|$
for each proper subset $J$ of $[d]$. Then for all $\epsilon>0$
there exists $C=C(\lambda,\epsilon)>1$ such that the following holds.
Let $N\ge1$, $\{n_{j}\}_{j=1}^{N}\subset\mathbb{Z}_{>0}$ and $\{K_{j}\}_{j=1}^{N}\subset[10,\infty)$
be given. Suppose that $\lambda_{1}^{-n_{1}}\ge\max\{2,\lambda_{1}^{-2}\}$
and,
\begin{enumerate}
\item $n_{j+1}\ge K_{j}n_{j}$ for all $1\le j<N$;
\item $H\left(\mu_{\lambda}^{(n_{j})};\lambda_{1}^{K_{j}n_{j}}\mid\lambda_{1}^{10n_{j}}\right)\ge\epsilon n_{j}$
for all $1\le j\le N$;
\item $n_{j}\ge C(\log K_{j})^{2}$ for all $1\le j\le N$.
\end{enumerate}
Then, 
\[
\sum_{j=1}^{N}\frac{1}{\log K_{j}\log\log K_{j}}\le C\left(1+\frac{1}{n_{1}}\sum_{j=1}^{N}\log K_{j}\right).
\]
\end{prop}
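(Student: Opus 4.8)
The plan is to transcribe the proof of \cite[Proposition 30]{BV-transcendent}, replacing each real-line ingredient by its higher-dimensional analogue from the preceding sections; no genuinely new idea is needed. First I would rewrite hypothesis (2) in the additive scale convention used in Corollary \ref{cor:ent inc for conv factors}, in which it states that the convolution factor $\mu_{\lambda}^{(n_{j})}$ carries at least $\epsilon n_{j}$ of average conditional entropy across the window $W_{j}:=[\,10\chi_{1}n_{j},\ \chi_{1}K_{j}n_{j}\,]$ (scales written as $2^{-t}$). Since $n_{j+1}\ge K_{j}n_{j}$ and $K_{j}\ge10$, the windows $W_{1},\dots,W_{N}$ are pairwise disjoint and increasingly fine. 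Comparing (2) with the trivial upper bound (\ref{eq:ub on cond avg ent}) forces $K_{j}-10\ge c(\lambda,\epsilon)>0$ (otherwise (2) would contradict hypothesis (3) once the constant $C$ is taken large), so one may set $\beta_{j}:=\min\{1/4,\ \epsilon/(4\chi_{1}(K_{j}-10))\}$, for which the entropy density of $\mu_{\lambda}^{(n_{j})}$ on $W_{j}$ exceeds $\beta_{j}$ and $\log\beta_{j}^{-1}\asymp_{\lambda,\epsilon}\log K_{j}$; by Lemma \ref{lem:H()>=00003D1/2 H()}, the same lower bound (up to a factor $\tfrac12$) persists for every convolution factor of $\mu_{\lambda}$ whose scale range contains $(\lambda_{1}^{n_{j}},1]$. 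The hypotheses $\dim\mu_{\lambda}<d$ and $\dim\pi_{J}\mu_{\lambda}=|J|$ for proper $J$ enter only through Proposition \ref{prop:non-satur}, which is already incorporated into Corollary \ref{cor:ent inc for conv factors}.

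Next I would run the iteration. Partitioning $(0,1]$ into the scale intervals $(\lambda_{1}^{n_{j+1}},\lambda_{1}^{n_{j}}]$ (with $n_{N+1}$ a large fixed integer) writes $\mu_{\lambda}=\nu_{N}*\cdots*\nu_{1}*\rho_{0}$, where $\nu_{j}:=\mu_{\lambda}^{(\lambda_{1}^{n_{j+1}},\lambda_{1}^{n_{j}}]}=S_{\lambda^{n_{j}}}\mu_{\lambda}^{(n_{j+1}-n_{j})}$ by (\ref{eq:mu^=00007Blambda_1^k I=00007D=00003D}) and $\rho_{0}:=\mu_{\lambda}^{(n_{1})}*\mu_{\lambda}^{(0,\lambda_{1}^{n_{N+1}}]}$; since $n_{j+1}-n_{j}\ge n_{j}$, the first paragraph together with the scaling invariance (\ref{eq: scaling-inva}) gives that $\nu_{j}$ has entropy density $>\beta_{j}$ across the translated window $\widetilde W_{j}:=W_{j}+\chi_{1}n_{j}$. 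Applying Corollary \ref{cor:ent inc for conv factors} at $\widetilde W_{j}$ with $\nu=\nu_{j}$ and $\mu_{\lambda}^{I}$ the partial product $\rho_{j-1}:=\nu_{j-1}*\cdots*\rho_{0}$ (a genuine convolution factor of $\mu_{\lambda}$, with $\nu_{j}*\rho_{j-1}=\rho_{j}$ and $\rho_{N}=\mu_{\lambda}$), then chaining the estimates via the interval monotonicity (\ref{eq:ineq for intervals}) and positivity (\ref{eq: posi-bou-condi-entr}) of average conditional entropy and via Lemma \ref{lem:conv don't dec} to push the partial products up to $\mu_{\lambda}$, one telescopes the per-window gains into a lower bound
\[
H\big(\mu_{\lambda};2^{-v_{N}}\mid 2^{-v_{0}}\big)\ \ge\ \sum_{j=1}^{N}\Big(C^{-1}\beta_{j}\big(\log\beta_{j}^{-1}\big)^{-1}|W_{j}|\ -\ C\Big),
\]
with $v_{0},v_{N}$ the coarsest and finest scales involved. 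One then refines this exactly as in \cite[Proposition 30]{BV-transcendent}: inside each $\nu_{j}$ the same scheme is run on its $\asymp K_{j}$ sub-blocks of length $n_{j}$ relative to a fixed fine sub-window, and summing the weights $(\log\beta^{-1})^{-1}$ along the resulting chain of amplification steps is what produces the $\log\log K_{j}$ in the final denominator. Bounding the left side by $C(v_{N}-v_{0})\le C'K_{N}n_{N}$ via (\ref{eq:ub on cond avg ent}), estimating each contribution, and using the lacunarity $n_{j+1}\ge K_{j}n_{j}$ (equivalently $\prod_{i<j}K_{i}\le n_{j}/n_{1}$) then yields $\sum_{j=1}^{N}\frac{1}{\log K_{j}\log\log K_{j}}\le C\big(1+\frac{1}{n_{1}}\sum_{j=1}^{N}\log K_{j}\big)$.

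The one real obstacle, as on the line, is error control: each invocation of the entropy increase loses an additive $C$, and after the internal subdivision the iteration performs of order $\sum_{j}\log K_{j}$ of them, so one must ensure the honest gain at each step dominates its error — this is precisely the role of hypothesis (3), $n_{j}\ge C(\log K_{j})^{2}$. A secondary, purely technical point is that every measure appearing in the iteration must remain an honest convolution factor $\mu_{\lambda}^{I}$ of $\mu_{\lambda}$ and every scale ratio entering Lemmas \ref{lem:cond avg ent is pos =000026 ub} and \ref{lem:conv don't dec} must be (essentially) integral; this is arranged using the auxiliary sequences $\{s_{n}\}$ just as in the proof of Theorem \ref{thm:effective ent inc result}, and requires no new ideas beyond those already present in the one-dimensional argument.
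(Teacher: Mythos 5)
Your proposal matches the paper's exactly: the paper itself \emph{omits} the proof of Proposition~\ref{pro:A.3}, remarking only that, with Corollary~\ref{cor:ent inc for conv factors}, properties~(\ref{eq:ub on cond avg ent}), (\ref{eq:ineq for intervals}), (\ref{eq: scaling-inva}), and Lemma~\ref{lem:H()>=00003D1/2 H()} in hand, the argument is almost identical to \cite[Proposition~30]{BV-transcendent}; you propose precisely this transcription, cite precisely those ingredients, and correctly identify (as does the remark following the proposition) that the one-dimensional non-saturation hypothesis of \cite{BV-transcendent} is replaced here by the dimension-drop and full-projection assumptions via Proposition~\ref{prop:non-satur}, already folded into Corollary~\ref{cor:ent inc for conv factors}. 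The one place where your sketch goes beyond what can be checked against the paper is the mechanism producing the $\log\log K_{j}$ in the denominator (your ``sub-blocks of length $n_j$'' refinement); this is described only schematically, but since the paper provides no detail at this step either and the ingredients you invoke are the right ones, it does not constitute a deviation from the paper's approach.
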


\begin{rem}
In \cite[Proposition 30]{BV-transcendent}, in the case $d=1$, it
is assumed that there exists $\alpha>0$ such that $H\left(\mu_{\lambda};t\mid2t\right)\le1-\alpha$
for all $t>0$. This assumption is needed only to apply \cite[Theorem 8]{BV-transcendent},
a version of Theorem \ref{thm:effective ent inc result} for the case
$d=1$, to convolution factors of $\mu_{\lambda}$. Since we already
have Corollary \ref{cor:ent inc for conv factors}, this assumption
is not required here.
\end{rem}

\subsection{\label{subsec:Diophantine-considerations}Diophantine considerations}

The following two propositions will be required during the proof of
Theorem \ref{thm:alg approx}. Recall the definition of $L_{0}$ from
(\ref{eq:def of L_0}).
\begin{prop}
\label{pro:A.5}There exists $C=C(\lambda)>1$ such that the following
holds for all $n\ge N(\lambda)\ge1$. Let $0<t<n^{-Cn}$, and suppose
that $\frac{1}{n}H\left(\mu_{\lambda}^{(n)};t\right)<H(p)$. Then
there exists $(\eta_{1},...,\eta_{d})=\eta\in\Omega$ so that $\eta_{j}$
is a root of a nonzero polynomial in $\mathcal{P}_{L_{0}}^{(n)}$
for each $1\le j\le d$, $|\lambda-\eta|<t^{1/C}$, and 
\[
H(\mu_{\eta}^{(n)})\le H\left(\mu_{\lambda}^{(n)};t\right).
\]
\end{prop}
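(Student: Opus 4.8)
The plan is to reduce the statement to a combinatorial/Diophantine fact about the \emph{difference polynomials} of the address maps, following the one-dimensional argument of \cite[Theorem A.2]{rapaport20203maps} (itself adapting \cite{Var-Bernoulli}) and handling the $d$ principal directions simultaneously. Recall first that $\mu_{\lambda}^{(n)}=\sum_{u\in\Lambda^{n}}p_{u}\delta_{x_{u}}$, where for $u=i_{1}\dots i_{n}$ the point $x_{u}=\varphi_{u}^{\lambda}(0)$ has $j$-th coordinate $\sum_{k=1}^{n}a_{i_{k},j}\lambda_{j}^{k-1}$. For distinct $u,v\in\Lambda^{n}$ and $1\le j\le d$ set $P_{u,v,j}(X):=\sum_{k=1}^{n}(a_{u_{k},j}-a_{v_{k},j})X^{k-1}$, so that $x_{u,j}-x_{v,j}=P_{u,v,j}(\lambda_{j})$, the polynomial $P_{u,v,j}$ lies in $\mathcal{P}_{L_{0}}^{(n)}$, and, because $\Phi_{j}^{\lambda}$ has no exact overlaps, $P_{u,v,j}$ is nonzero with $P_{u,v,j}(\lambda_{j})\ne0$; in particular the $x_{u}$ are distinct, so $H(\mu_{\lambda}^{(n)})=nH(p)$. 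Note also the cocycle identity $P_{u,v,j}+P_{v,w,j}=P_{u,w,j}$.

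Next I would extract a partition of $\Lambda^{n}$ from the entropy hypothesis. Writing $r:=\lambda^{-\frac{1}{\chi_{1}}\log t}$, so that $r_{j}=t^{\chi_{j}/\chi_{1}}\le t$, one has $H(\mu_{\lambda}^{(n)};t)=\int_{[0,1)^{d}}H(\lfloor X/r+x\rfloor)\,dx$ for $X\sim\mu_{\lambda}^{(n)}$. Let $\mathcal{Q}$ be the partition of $\Lambda^{n}$ into connected components of the graph in which $u$ and $v$ are joined whenever $|x_{u,j}-x_{v,j}|<r_{j}$ for every $j$. For each shift $x$ the partition of $\Lambda^{n}$ induced by $u\mapsto\lfloor x_{u}/r+x\rfloor$ is a refinement of $\mathcal{Q}$, hence $H(\mu_{\lambda}^{(n)},\mathcal{Q})\le H(\mu_{\lambda}^{(n)};t)$; meanwhile the assumption $\frac{1}{n}H(\mu_{\lambda}^{(n)};t)<H(p)=\frac{1}{n}H(\mu_{\lambda}^{(n)})$ forces $\mathcal{Q}$ to be strictly coarser than the partition into singletons, so the graph has at least one edge.

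The core step is to produce $\eta$. I would seek $\eta=(\eta_{1},\dots,\eta_{d})$ such that, for each $j$, $\eta_{j}$ is a common root of all the edge polynomials $P_{u,v,j}$. Granting this together with $|\lambda-\eta|<t^{1/C}$, condition (1) holds (pick any edge $\{u_{0},v_{0}\}$: $\eta_{j}$ is a root of the nonzero $P_{u_{0},v_{0},j}\in\mathcal{P}_{L_{0}}^{(n)}$), $\eta\in\Omega$ follows since $\eta$ is extremely close to $\lambda\in\Omega$, and the atom partition of $\mu_{\eta}^{(n)}$ is coarser than $\mathcal{Q}$ — because $\varphi_{u}^{\eta}(0)=\varphi_{v}^{\eta}(0)$ on each edge and equality is an equivalence relation — so that $H(\mu_{\eta}^{(n)})\le H(\mu_{\lambda}^{(n)},\mathcal{Q})\le H(\mu_{\lambda}^{(n)};t)$, which is the last conclusion. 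To build $\eta$, fix $j$: by the cocycle identity every $P_{u,v,j}$ with $u,v$ in a common $\mathcal{Q}$-cell satisfies $|P_{u,v,j}(\lambda_{j})|<|\Lambda|^{n}t$, still below $t^{1/2}$, say. The key input is a resultant estimate: if $P,Q\in\mathcal{P}_{L_{0}}^{(n)}$ are nonzero and $\max\{|P(\lambda_{j})|,|Q(\lambda_{j})|\}$ is below an explicit threshold of the shape $n^{-C_{1}n}$ — which is why $t<n^{-Cn}$ with $C=C(\lambda)>C_{1}$ is imposed — then the integer $\mathrm{Res}(P,Q)$ has absolute value $<1$, hence vanishes, so $P$ and $Q$ share a nonconstant factor. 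Feeding the family $\{P_{u,v,j}\}$ into this, using the cocycle structure to pass from pairwise common factors to a single common factor $m_{j}$, and taking $\eta_{j}$ to be the root of $m_{j}$ nearest $\lambda_{j}$, one obtains the desired $\eta_{j}$, with the closeness $|\lambda_{j}-\eta_{j}|<t^{1/C}$ coming from the fact that $m_{j}$ is forced to be very small at $\lambda_{j}$.

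I expect the third step to be the main obstacle. A nonzero degree-$<n$ integer polynomial that is merely $<t$ at $\lambda_{j}$ need only have a root within $t^{1/n}$ of $\lambda_{j}$, which is far weaker than $t^{1/C}$; squeezing out the stronger bound requires genuinely using that \emph{many} atoms cluster, organised by the cocycle relation, so that the common factor $m_{j}$ has controlled complexity and is extremely small at $\lambda_{j}$, and requires carrying this out compatibly across all $d$ coordinates at once — precisely the point at which the one-dimensional argument of \cite{rapaport20203maps,Var-Bernoulli} must be adapted with care.
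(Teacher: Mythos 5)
Your setup matches the paper's in its essentials: both proofs start from the observation that $\frac1nH(\mu_\lambda^{(n)};t)<H(p)$ forces two distinct atoms of $\mu_\lambda^{(n)}$ into the same $r$-box for some shift, record the resulting small difference polynomials (you restrict to edge polynomials from your graph $\mathcal{Q}$; the paper instead takes, for each $j$, the larger set $\mathcal{A}_j$ of \emph{all} nonzero polynomials with coefficients in $D_j:=\{a_{i_1,j}-a_{i_2,j}\}$, degree $<n$, and $|P(\lambda_j)|\le r_j$), and then reduce the conclusion $H(\mu_\eta^{(n)})\le H(\mu_\lambda^{(n)};t)$ to finding, for each $j$, a single $\eta_j$ near $\lambda_j$ annihilating all of these polynomials simultaneously. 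Your reduction is sound, including the point that it suffices to kill the edge polynomials and that equality propagates through $\mathcal{Q}$-components.

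The genuine gap is exactly where you flag it. Your route --- pairwise resultants to get pairwise nonconstant gcds, then ``use the cocycle to pass to a single common factor $m_j$'' --- does not close. Pairwise gcds need not be compatible: $\gcd(P,Q)$ and $\gcd(Q,R)$ can be coprime even when $P+Q=R$ (the cocycle relation gives no divisibility). Worse, even if one extracts an irreducible factor $q$ of some $P_0\in\mathcal{A}_j$ to play the role of $m_j$, its integer coefficients can be as large as $2^{O(n)}$, so the Hadamard/Bezout bound on $\mathrm{Res}(q,P)$ is $2^{O(n^2)}\max(|q(\lambda_j)|,|P(\lambda_j)|)$, and the hypothesis $t<n^{-Cn}$ is not small enough to kill it; that bound was calibrated for resultants of polynomials in $\mathcal{P}_{L_0}^{(n)}$, not of their factors. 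The paper avoids this entirely: it reduces the construction of $\eta_j$ to a one-variable statement and cites the proof of \cite[Proposition A.5]{rapaport20203maps}. The mechanism there is different from yours: one fixes a single $P_0\in\mathcal{A}_j$, uses the root-counting lemma (Lemma~\ref{lem:There-is-function r} of the present paper) to show $P_0$ has only $O_{\lambda,L_0}(1)$ roots in a fixed neighbourhood of $\lambda_j$, hence some root $\eta_j$ with $|\lambda_j-\eta_j|\le r_j^{1/C}$ for a constant $C=C(\lambda,L_0)$, and then shows $P(\eta_j)=0$ for every other $P\in\mathcal{A}_j$ by a Liouville-type lower bound on $|P(\eta_j)|$ (via the norm of the algebraic number $P(\eta_j)$ and the control on the conjugates of $\eta_j$, all roots of $P_0$), played against the mean-value upper bound $|P(\eta_j)|\le r_j+O(nL_0)\,r_j^{1/C}$. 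That step is the substance of the proposition, and you would need either to reproduce it or to find a correct replacement; as written your sketch does not supply it.
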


\begin{proof}
Let $C>1$ be large with respect to $\lambda$, let $n\in\mathbb{Z}_{>0}$
be with $C\ll n$, and let $0<t<n^{-Cn}$ be such that $\frac{1}{n}H\left(\mu_{\lambda}^{(n)};t\right)<H(p)$.
There exists $x\in[0,1)^{d}$ so that
\begin{equation}
H\left(\left\lfloor \lambda^{\frac{1}{\chi_{1}}\log t}\sum_{k=0}^{n-1}a_{\xi_{k}}\lambda^{k}+x\right\rfloor \right)\le H\left(\mu_{\lambda}^{(n)};t\right)<nH(p),\label{eq: A.4.1}
\end{equation}
where $\{\xi_{k}\}_{k\in\mathbb{Z}}$ is as in Section \ref{subsec:Convolution-factors}.

For each $1\leq j\leq d$ set
\[
r_{j}=\lambda_{j}^{-\frac{1}{\chi_{1}}\log t}\text{ and }D_{j}:=\left\{ a_{i_{1},j}-a_{i_{2},j}:i_{1},i_{2}\in\Lambda\right\} ,
\]
and let $\mathcal{A}_{j}$ be the set of all nonzero polynomials $P(X)=\sum_{k=0}^{n-1}d_{k}X^{k}\in\mathbb{Z}[X]$
with $d_{0},...,d_{k-1}\in D_{j}$ and $|P(\lambda_{j})|\leq r_{j}$.
Note that $\mathcal{A}_{j}\subset\mathcal{P}_{L_{0}}^{(n)}$, and
that by (\ref{eq: A.4.1}) it follows that $\mathcal{A}_{j}$ is nonempty.
From this, since $r_{j}\le t$, and by an argument appearing in the
proof of \cite[Proposition A.5]{rapaport20203maps}, it follows that
there exists $\eta_{j}\in(0,1)$ such that $|\lambda_{j}-\eta_{j}|<r_{j}^{1/C}/d$
and $P(\eta_{j})=0$ for each $P\in\mathcal{A}_{j}$.

Setting $\eta:=(\eta_{1},...,\eta_{d})$ we have $|\lambda-\eta|<t^{1/C}<n^{-n}$.
By assuming that $n$ is sufficiently large and since $\lambda\in\Omega$,
we obtain $\eta\in\Omega$. Moreover, from (\ref{eq: A.4.1}) and
by the definition of the sets $\mathcal{A}_{1},...,\mathcal{A}_{d}$,
we get $H(\mu_{\eta}^{(n)})\le H\left(\mu_{\lambda}^{(n)};r\right)$.
This completes the proof of the proposition.
\end{proof}
The proof of the following statement, which relies on Proposition
\ref{pro:A.5}, is similar to the proof of \cite[Proposition A.7]{rapaport20203maps}
and is therefore omitted.
\begin{prop}
\label{pro:A.7}There exists $C=C(\lambda)>1$ such that the following
holds for all $n\ge N(\lambda)\ge1$. Suppose that there exists $(\eta_{1},...,\eta_{d})=\eta\in\Omega$
such that $|\lambda-\eta|<n^{-Cn}$ and $\eta_{j}$ is a root of a
nonzero polynomial in $\mathcal{P}_{L_{0}}^{(n)}$ for each $1\leq j\leq d$.
Then $\frac{1}{n}H(\mu_{\lambda}^{(n)};t)=H(p)$ for all $0<t\le|\lambda-\eta|^{C}$.
\end{prop}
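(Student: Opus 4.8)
The plan is to argue by contradiction, using Proposition~\ref{pro:A.5} to manufacture a competing algebraic approximant and then eliminating it by a Diophantine comparison, in the spirit of \cite[Proof of Proposition A.7]{rapaport20203maps}. First a reduction: by Lemma~\ref{lem:cond avg ent is pos =000026 ub} (see also (\ref{eq: posi-bou-condi-entr})) the function $t\mapsto\frac1n H(\mu_\lambda^{(n)};t)$ is non-increasing and bounded above by $H(p)$, so it suffices to establish $\frac1n H(\mu_\lambda^{(n)};t)=H(p)$ at the single value $t=|\lambda-\eta|^{C}$; the general case then follows. Assume, for contradiction, that $\frac1n H(\mu_\lambda^{(n)};t_0)<H(p)$ with $t_0:=|\lambda-\eta|^{C}$, and let $C_1=C_1(\lambda)>1$ be the constant from Proposition~\ref{pro:A.5}. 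Taking $C$ large enough in terms of $C_1$, we have $t_0<n^{-C^{2}n}\le n^{-C_1 n}$ for every $n\ge N(\lambda)$, so Proposition~\ref{pro:A.5} applies with $t=t_0$ and produces $\eta'=(\eta'_1,\dots,\eta'_d)\in\Omega$ such that each $\eta'_j$ is a root of a nonzero polynomial in $\mathcal{P}_{L_0}^{(n)}$, $|\lambda-\eta'|<t_0^{1/C_1}=|\lambda-\eta|^{C/C_1}$, and $H(\mu_{\eta'}^{(n)})\le H(\mu_\lambda^{(n)};t_0)<nH(p)$.

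Since $\mu_{\eta'}^{(n)}$ is the push-forward of the product measure $p^{\otimes n}$ under $u\mapsto\varphi_u^{\eta'}(0)$, the strict inequality $H(\mu_{\eta'}^{(n)})<nH(p)=H(p^{\otimes n})$ forces this map to be non-injective on the support of $p^{\otimes n}$; fix distinct $v,v'\in\Lambda^n$ with $p_v,p_{v'}>0$ and $\varphi_v^{\eta'}(0)=\varphi_{v'}^{\eta'}(0)$, and for $1\le j\le d$ put $Q_j(X):=\sum_{k=0}^{n-1}(a_{v_{k+1},j}-a_{v'_{k+1},j})X^{k}\in\mathcal{P}_{L_0}^{(n)}$. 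Then $Q_j(\eta'_j)=\varphi^{\eta'}_{v,j}(0)-\varphi^{\eta'}_{v',j}(0)=0$, whereas $Q_j(\lambda_j)=\varphi^{\lambda}_{v,j}(0)-\varphi^{\lambda}_{v',j}(0)\ne0$, because $\Phi_j^\lambda$ has no exact overlaps and $v,v'$ have equal length (so $\varphi^\lambda_{v,j}\ne\varphi^\lambda_{v',j}$ forces $\varphi^\lambda_{v,j}(0)\ne\varphi^\lambda_{v',j}(0)$). As $Q_j$ has degree $<n$, coefficients bounded by $L_0$ in absolute value, and $\lambda_j,\eta'_j\in(0,1)$, we obtain $0<|Q_j(\lambda_j)|=|Q_j(\lambda_j)-Q_j(\eta'_j)|\le n^{2}L_0\,|\lambda-\eta'|\le n^{2}L_0\,|\lambda-\eta|^{C/C_1}$.

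It remains to contradict this last bound using the hypothesised approximant $\eta$, for which each $\eta_j$ is a root of a nonzero polynomial in $\mathcal{P}_{L_0}^{(n)}$ and $|\lambda-\eta|<n^{-Cn}$. This is the Diophantine core of the argument and is carried out as in \cite[Proof of Proposition A.7]{rapaport20203maps}: the minimal polynomials of the $\eta_j$ and $\eta'_j$ have degree $<n$ and logarithmic height $O(n)$, with all roots in a fixed bounded region, and combining the algebraic relations satisfied by $\eta_j$ and $\eta'_j$, the estimate $|Q_j(\lambda_j)|\le n^{2}L_0\,|\lambda-\eta|^{C/C_1}$, and $|\lambda-\eta|<n^{-Cn}$, via elementary resultant and root-separation bounds for polynomials in $\mathcal{P}_{L_0}^{(n)}$, yields an incompatibility once $C=C(\lambda)$ is taken large enough. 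I expect this final step to be the main obstacle: the monotonicity reduction, the application of Proposition~\ref{pro:A.5}, and the extraction of the overlap polynomials $Q_j$ are routine, whereas the Diophantine comparison of the two approximants $\eta$ and $\eta'$ — in particular exhibiting one admissible constant $C=C(\lambda)$ valid for all $n\ge N(\lambda)$ while balancing the scales $n^{-Cn}$, $|\lambda-\eta|^{C/C_1}$ and the level-$n$ height/separation scale — requires genuine care.
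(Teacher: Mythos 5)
Your overall plan — argue by contradiction, invoke Proposition~\ref{pro:A.5} to manufacture a competing approximant $\eta'$, and then eliminate it Diophantine-wise — does match the route the paper indicates (it cites \cite[Proposition A.7]{rapaport20203maps} as the model and says the proof relies on Proposition~\ref{pro:A.5}). The monotonicity reduction to the single scale $t_0=|\lambda-\eta|^C$ is fine (using (\ref{eq: posi-bou-condi-entr}) together with $H(\mu_\lambda^{(n)};t)\le nH(p)$ for all $t$), and the application of Proposition~\ref{pro:A.5} is set up correctly. But as written the proposal is incomplete in a way that goes to the heart of the statement.

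The Diophantine core is entirely deferred. You end with ``combining the algebraic relations satisfied by $\eta_j$ and $\eta'_j$, the estimate $|Q_j(\lambda_j)|\le n^2L_0|\lambda-\eta|^{C/C_1}$, and $|\lambda-\eta|<n^{-Cn}$, via elementary resultant and root-separation bounds \dots yields an incompatibility once $C=C(\lambda)$ is taken large enough,'' and you explicitly flag this as the step that ``requires genuine care.'' That step \emph{is} the proposition: without it there is no proof, and no justification for the exponent $C(\lambda)$. Moreover, it is not clear the comparison you indicate closes at all. Standard separation bounds for distinct roots of two polynomials in $\mathcal{P}_{L_0}^{(n)}$ (or for $|\mathrm{Res}(P_j,Q_j)|$) have the shape $\exp(-c(\lambda)\,n^{2})$, whereas the hypothesis only gives $|\lambda-\eta|<n^{-Cn}=\exp(-Cn\log n)$, and $n^{-Cn}\gg\exp(-cn^2)$ once $n$ is large with $C$ fixed. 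So a naive comparison of $\eta$, $\eta'$ (or of $P_j$ with the overlap polynomial $Q_j$) via root separation alone does not yield the contradiction; the argument must exploit the structure more carefully (in particular the second approximant is much closer, $|\lambda-\eta'|<|\lambda-\eta|^{C/C_1}$, and the scale $|\lambda-\eta|$ itself enters), and exhibiting a single admissible $C(\lambda)$ is exactly the content you have skipped.

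Secondly, your assertion that $Q_j(\lambda_j)\ne 0$ ``because $\Phi_j^\lambda$ has no exact overlaps'' invokes a hypothesis that is \emph{not} part of Proposition~\ref{pro:A.7}: the statement imposes no restriction on $\lambda$ beyond the existence of $\eta$. If you run the argument through an exact collision at $\eta'$, you must justify $Q_j(\lambda_j)\ne 0$ for at least one $j$ from the stated hypotheses alone (for instance by observing that if $Q_j(\lambda_j)=0$ for every $j$ with $Q_j\not\equiv 0$, then those $\lambda_j$ are themselves roots of polynomials in $\mathcal{P}_{L_0}^{(n)}$ and one must then compare them with $\eta_j$ — again a Diophantine step). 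Relatedly, $Q_j$ can be identically zero for some coordinates, so the inequality $0<|Q_j(\lambda_j)|\le n^2L_0|\lambda-\eta'|$ should be restricted to the $j$ with $Q_j\not\equiv 0$; at least one such $j$ exists since $v\ne v'$, but the argument must keep track of which coordinate it works with.
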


\subsection{\label{subsec:Proof-of-Theorem alg approx}Proof of Theorem \ref{thm:alg approx}}

We shall need the following statement, which follows directly from
\cite[Theorem 6.3]{Rap_SA_diag} and (\ref{eq:two defs of ent are equiv}).
\begin{thm}
\label{thm:cond ent tends to 0}Suppose that $\dim\mu_{\lambda}<d$
and that $\dim\pi_{J}\mu_{\lambda}=|J|$ for each proper subset $J$
of $[d]$. Then for any $q>1$,
\[
\lim_{n\to\infty}\frac{1}{n}H\left(\mu_{\lambda}^{(n)};\lambda_{1}^{qn}\mid\lambda_{1}^{n}\right)=\lim_{n\to\infty}\frac{1}{n}H\left(\mu_{\lambda}^{(n)},\mathcal{E}_{qn}\mid\mathcal{E}_{n}\right)=0.
\]
\end{thm}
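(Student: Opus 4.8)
The plan is to deduce the statement almost verbatim from \cite[Theorem 6.3]{Rap_SA_diag}, whose hypotheses are precisely that $\dim\mu_{\lambda}<d$ together with $\dim\pi_{J}\mu_{\lambda}=|J|$ for every proper $J\subset[d]$, and to spend the remaining effort only on matching notation. That theorem supplies the vanishing of the normalized conditional entropy $\frac1n H\left(\mu_{\lambda}^{(n)},\mathcal{E}_{qn}\mid\mathcal{E}_{n}\right)$ as $n\to\infty$ for each fixed $q>1$ (and the non-conformal partitions $\mathcal{E}_{n}$ used here are the same ones imported from \cite{Rap_SA_diag}, as recalled in Section~\ref{subsec:Non-conformal-partitions}, so there is nothing to reconcile there). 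The substantive content — that dimension drop forces the level-$n$ discrete approximation to carry no extra entropy between the non-conformal scales $n$ and $qn$ — lies entirely in \cite{Rap_SA_diag}, through the inverse-theorem machinery developed there; it genuinely cannot be extracted here from (\ref{eq:fini-sum-entr}) alone, because that estimate compares $\mu_{\lambda}^{(n)}$ with $\mu_{\lambda}$ only at scales $\mathcal{E}_{k}$ with $k\le n$, whereas here $qn>n$.

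Granting the $\mathcal{E}$-form of the limit, I would obtain the first equality as follows. By the definition of the notation $H(\,\cdot\,;t)$ from Section~\ref{subsec:Convolution-factors}, and since $-\frac{1}{\chi_{1}}\log\lambda_{1}^{qn}=qn$ and $-\frac{1}{\chi_{1}}\log\lambda_{1}^{n}=n$, we have $H(\mu;\lambda_{1}^{qn})=H(\mu;\lambda^{qn})$ and $H(\mu;\lambda_{1}^{n})=H(\mu;\lambda^{n})$, so that
\[
H\left(\mu_{\lambda}^{(n)};\lambda_{1}^{qn}\mid\lambda_{1}^{n}\right)=H\left(\mu_{\lambda}^{(n)};\lambda^{qn}\right)-H\left(\mu_{\lambda}^{(n)};\lambda^{n}\right).
\]
Applying (\ref{eq:two defs of ent are equiv}) to each term — legitimate since $qn,n\ge0$ — the right-hand side equals $H\left(\mu_{\lambda}^{(n)},\mathcal{E}_{qn}\right)-H\left(\mu_{\lambda}^{(n)},\mathcal{E}_{n}\right)+O(1)=H\left(\mu_{\lambda}^{(n)},\mathcal{E}_{qn}\mid\mathcal{E}_{n}\right)+O(1)$. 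Dividing by $n$ and letting $n\to\infty$, the $O(1)$ term is washed out, so the first limit exists and coincides with the second, which is $0$.

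I do not anticipate any real obstacle inside the present paper: everything reduces to bookkeeping once \cite[Theorem 6.3]{Rap_SA_diag} is invoked. The only point requiring care is conceptual rather than technical — one must not conflate $\mu_{\lambda}^{(n)}$ viewed at the coarse scale $\mathcal{E}_{n}$, where (\ref{eq:fini-sum-entr}) makes it entropically indistinguishable from $\mu_{\lambda}$, with $\mu_{\lambda}^{(n)}$ viewed at the fine scale $\mathcal{E}_{qn}$, where it is a purely atomic measure and the comparison with $\mu_{\lambda}$ collapses. It is exactly this fine-scale control that must be quoted from \cite{Rap_SA_diag} rather than reproduced here, and it is there that the hypotheses $\dim\mu_{\lambda}<d$ and $\dim\pi_{J}\mu_{\lambda}=|J|$ do their work.
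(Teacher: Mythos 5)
Your proposal is correct and matches the paper's own (extremely terse) proof exactly: the paper invokes \cite[Theorem 6.3]{Rap_SA_diag} for the $\mathcal{E}$-partition form of the limit and then cites (\ref{eq:two defs of ent are equiv}) to pass to the average-entropy form, which is precisely the bookkeeping you spell out. Your remarks on why (\ref{eq:fini-sum-entr}) alone cannot carry the fine-scale control, and on the conceptual role of the dimension-drop hypotheses, are accurate but auxiliary.
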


Next, we prove Theorem \ref{thm:alg approx}, which is the following
statement.
\begin{thm*}
Suppose that $\dim\mu_{\lambda}<\gamma$, $\dim\pi_{J}\mu_{\lambda}=|J|$
for each proper subset $J$ of $[d]$, and $\lambda_{j_{0}}$ is transcendental
for some $1\le j_{0}\le d$. Then for every $\epsilon>0$ and $N\ge1$
there exist $n\ge N$ and $(\eta_{1},\ldots,\eta_{d})=\eta\in\Omega$
such that,
\begin{enumerate}
\item for each $1\le j\le d$ there exists $0\neq P_{j}\in\mathcal{P}_{L_{0}}^{(n)}$
with $P_{j}(\eta_{j})=0$;
\item $h_{RW}(\Phi^{\eta},p)<\kappa+\epsilon$;
\item $|\lambda-\eta|\le\exp\left(-n^{1/\epsilon}\right)$.
\end{enumerate}
\end{thm*}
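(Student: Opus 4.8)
The plan is to follow the iterative scheme of \cite[Theorem A.2]{rapaport20203maps} and \cite{BV-transcendent}, using Proposition \ref{pro:A.3} as the engine. First, observe that by \cite[Theorem 1.7]{Rap_SA_diag} the hypothesis $\dim\mu_\lambda<\gamma$ together with transcendence of $\lambda_{j_0}$ guarantees we are not in the algebraic case. By Theorem \ref{thm:cond ent tends to 0}, the normalized conditional entropies $\frac{1}{n}H(\mu_\lambda^{(n)};\lambda_1^{qn}\mid\lambda_1^{n})$ tend to $0$ for every $q>1$; in particular, there is a scale beyond which these are small. On the other hand, Propositions \ref{pro:A.5} and \ref{pro:A.7} give the dichotomy at a single scale: either $\frac{1}{n}H(\mu_\lambda^{(n)};t)$ is genuinely less than $H(p)$ — in which case $\lambda$ is very well approximated by an algebraic $\eta$ with controlled polynomials and $H(\mu_\eta^{(n)})\le H(\mu_\lambda^{(n)};t)$ — or else it equals $H(p)$ on a whole range of small scales, which can only happen near an algebraic point. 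The goal is to locate, at infinitely many scales $n$, the first alternative with the entropy bound tight enough to yield $h_{RW}(\Phi^\eta,p)<\kappa+\epsilon$.

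The key step is the following. Fix $\epsilon>0$ and $N\ge1$, and let $C=C(\lambda,\epsilon')$ be the constant from Proposition \ref{pro:A.3} for a suitably small $\epsilon'$. Suppose, for contradiction, that for all large $n$ there is \emph{no} admissible $\eta$; combined with the density of the first alternative in the dichotomy, this forces a growing sequence of scales $n_1<n_2<\cdots$ at which the normalized conditional entropy $H(\mu_\lambda^{(n_j)};\lambda_1^{K_j n_j}\mid\lambda_1^{10 n_j})\ge\epsilon' n_j$ persists, with ratios $K_j$ that one can take to grow (because Theorem \ref{thm:cond ent tends to 0} forces the entropy to eventually drop, so the "resistance'' must be pushed to ever-larger ratios). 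Feeding these into Proposition \ref{pro:A.3} yields $\sum_j \frac{1}{\log K_j \log\log K_j}\le C(1+\frac{1}{n_1}\sum_j \log K_j)$. Choosing the $K_j$ and the spacing $n_{j+1}\ge K_j n_j$ appropriately — exactly as in \cite{BV-transcendent} — makes the left side diverge while keeping the right side bounded, a contradiction. Hence at some scale $n\ge N$ the good alternative occurs: Proposition \ref{pro:A.5} produces $\eta\in\Omega$ with $P_j(\eta_j)=0$ for $P_j\in\mathcal{P}_{L_0}^{(n)}$, with $|\lambda-\eta|<t^{1/C}$ where $t$ can be taken below $\exp(-n^{1/\epsilon})$ provided the contradiction argument is set up with this $t$, and with $H(\mu_\eta^{(n)})\le H(\mu_\lambda^{(n)};t)$ small.

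It remains to convert the bound $H(\mu_\eta^{(n)})\le H(\mu_\lambda^{(n)};t)$ into item (\ref{enu:ineq for H_RW}), namely $h_{RW}(\Phi^\eta,p)<\kappa+\epsilon$. Here one uses $h_{RW}(\Phi^\eta,p)=\inf_m \frac{1}{m}H(\mu_\eta^{*,m})$ from (\ref{eq:h_RW =00003D inf}) (applied to the random-walk measure), together with (\ref{eq:fini-sum-entr}) and Lemma \ref{lem:entr-dimens}, which identifies $\kappa=\lim_n \frac1n H(\mu_\lambda,\mathcal{E}_n)$ — so $\frac1n H(\mu_\lambda^{(n)};t)$ is within $\epsilon$ of $\kappa$ once $n$ is large and the scale $t$ is in the right window where Theorem \ref{thm:cond ent tends to 0} applies; since $\eta$ is chosen with $h_{RW}(\Phi^\eta,p)\le \frac1n H(\mu_\eta^{(n)})$, the bound follows. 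The main obstacle is the bookkeeping in the contradiction argument: one must carefully interleave the Diophantine dichotomy (Propositions \ref{pro:A.5}, \ref{pro:A.7}), the entropy-drop statement (Theorem \ref{thm:cond ent tends to 0}), and the summation inequality (Proposition \ref{pro:A.3}) so that the scales $n_j$, ratios $K_j$, and target precision $t\le\exp(-n^{1/\epsilon})$ are all mutually compatible — this is precisely the delicate part of \cite[Theorem A.2]{rapaport20203maps} and \cite{BV-transcendent}, and transcendence of $\lambda_{j_0}$ is what lets us dispose of the degenerate algebraic case cleanly.
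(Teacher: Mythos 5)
Your proposal identifies all the essential ingredients — the contradiction scheme in which one assumes every admissible algebraic $\eta\in\Omega$ of degree $<n$ satisfies $|\lambda-\eta|>\exp(-n^{1/\epsilon})$, the construction of an increasing sequence of scales $\{n_j\}$ via the dichotomy coming from Propositions~\ref{pro:A.5} and~\ref{pro:A.7}, the use of Lemma~\ref{lem:entr-dimens} and Theorem~\ref{thm:cond ent tends to 0} to enforce the upper entropy bound $\frac1n H(\mu_\lambda^{(n)};\lambda_1^{10n})\le\kappa+\epsilon$, the derivation of $h_{RW}(\Phi^\eta,p)<\kappa+2\epsilon$ from $H(\mu_\eta^{(n)})\le H(\mu_\lambda^{(n)};t)$ via (\ref{eq:h_RW =00003D inf}), and finally Proposition~\ref{pro:A.3} to reach the contradiction — and this is precisely the strategy the paper carries out. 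The only minor imprecision is that the paper's dichotomy at each scale $q$ compares $H(\mu_\lambda^{(q)};q^{-Cq})$ against $q(\kappa+2\epsilon)$ (not against $qH(p)$), so the $h_{RW}$ bound falls out immediately from the negation of the first alternative rather than from a limiting argument; otherwise the approach is the same.
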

\begin{proof}
By $\text{dim }\mu_{\lambda}<\gamma$ and (\ref{eq:ub for LY-dim})
with $m=d-1$, we have $\kappa<H(p)$. Let
\[
0<\epsilon<\frac{1}{3}\min\left\{ 1,H(p)-\kappa\right\} 
\]
be given. For each $n\ge1$ let $E^{(n)}$ be the set of all $(\eta_{1},...,\eta_{d})=\eta\in\Omega$
such that $h_{RW}(\Phi^{\eta},p)<\kappa+2\epsilon$ and for each $1\le j\le d$
there exists $0\neq P_{j}\in\mathcal{P}_{L_{0}}^{(n)}$ with $P_{j}(\eta_{j})=0$.
Assume by contradiction that,
\begin{equation}
|\lambda-\eta|>\exp\left(-n^{1/\epsilon}\right)\text{ for all }n\ge\epsilon^{-1}\text{ and }\eta\in E^{(n)}.\label{eq:all eta in E^(n)}
\end{equation}

Let $C>1$ and $n_{0}\in\mathbb{Z}_{\ge1}$ be with $\epsilon^{-1}\ll C\ll n_{0}$,
and suppose also that $C$ is large with respect to $\lambda$. We
next define by induction a sequence $n_{0}<n_{1}<n_{2}<\ldots$ of
positive integers so that for each $j\ge0$
\begin{equation}
\left\lceil \frac{Cn_{j}\log n_{j}}{\log1/\lambda_{1}}\right\rceil \le n_{j+1}<n_{j}^{2/\epsilon},\label{eq:comp of n_j =000026 n_j+1}
\end{equation}
and
\begin{equation}
H\left(\mu_{\lambda}^{(n_{j+1})};n_{j+1}^{-Cn_{j+1}}\mid\lambda_{1}^{10n_{j+1}}\right)\ge\epsilon n_{j+1}.\label{eq:A.5.3}
\end{equation}

Let $j\geq0$, assume $n_{j}$ has been chosen, and set $q=\left\lceil \frac{Cn_{j}\log n_{j}}{\log1/\lambda_{1}}\right\rceil $.
Since $\lambda_{1}^{-1},\epsilon^{-1}\ll C\ll n_{0}$, we may assume
that $q<n_{j}^{2}$.

Suppose first that
\begin{equation}
H\left(\mu_{\lambda}^{(q)};q^{-Cq}\right)\ge q\left(\kappa+2\epsilon\right),\label{eq:first scenario}
\end{equation}
in which case we set $n_{j+1}=q$. From Lemma \ref{lem:entr-dimens},
from (\ref{eq:fini-sum-entr}) and (\ref{eq:two defs of ent are equiv}),
and by Theorem \ref{thm:cond ent tends to 0},
\begin{equation}
H\left(\mu_{\lambda}^{(n)};\lambda_{1}^{10n}\right)\le n\left(\kappa+\epsilon\right)\text{ for all }n\ge q.\label{eq:ent <=00003D all n =00005Cgeq}
\end{equation}
Hence, by (\ref{eq:first scenario}) it follows that (\ref{eq:A.5.3})
is satisfied. Additionally, note that $n_{j+1}=q<n_{j}^{2}<n_{j}^{2/\epsilon}$
and so (\ref{eq:comp of n_j =000026 n_j+1}) also holds.

Next suppose that $H\left(\mu_{\lambda}^{(q)};q^{-Cq}\right)<q\left(\kappa+2\epsilon\right)$,
which implies $\frac{1}{q}H\left(\mu_{\lambda}^{(q)};q^{-Cq}\right)<H(p)$.
By Proposition \ref{pro:A.5}, there exists $(\eta_{1},...,\eta_{d})=\eta\in\Omega$
so that $\eta_{j}$ is a root of a nonzero polynomial in $\mathcal{P}_{L_{0}}^{(q)}$
for each $1\le j\le d$, $|\lambda-\eta|<q^{-C^{1/2}q}$, and
\[
H(\mu_{\eta}^{(q)})\le H\left(\mu_{\lambda}^{(q)};q^{-Cq}\right)<q\left(\kappa+2\epsilon\right).
\]
Together with (\ref{eq:h_RW =00003D inf}), the last inequality implies
$h_{RW}(\Phi^{\eta},p)\le\kappa+2\epsilon$, and so $\eta\in E^{(q)}$.

By assumption $\lambda_{j_{0}}$ is transcendental for some $1\le j_{0}\le d$,
and so $\lambda\ne\eta$. We choose $n_{j+1}$ to be the largest $n\in\mathbb{Z}_{>0}$
so that $|\lambda-\eta|<n^{-C^{1/2}n}$. Since $|\lambda-\eta|<q^{-C^{1/2}q}$
we have $n_{j+1}\ge q$. From
\[
(n_{j+1}+1)^{-C^{1/2}(n_{j+1}+1)}\le|\lambda-\eta|<n_{j+1}^{-C^{1/2}n_{j+1}}
\]
and by Proposition \ref{pro:A.7}, we obtain 
\[
H\left(\mu_{\lambda}^{(n_{j+1})};(n_{j+1}+1)^{-C(n_{j+1}+1)}\right)=n_{j+1}H(p)>n_{j+1}\left(\kappa+3\epsilon\right).
\]
From this, from (\ref{eq:ent <=00003D all n =00005Cgeq}), and by
Lemma \ref{lem:cond avg ent is pos =000026 ub}, we get that (\ref{eq:A.5.3})
also holds in the present case.

From (\ref{eq:all eta in E^(n)}) and since $\eta\in E^{(q)}$, we
get $|\lambda-\eta|>\exp\left(-q^{1/\epsilon}\right)$. Also recall
that $q<n_{j}^{2}$. By combining these facts together with $|\lambda-\eta|<n_{j+1}^{-C^{1/2}n_{j+1}}$,
we obtain
\begin{equation}
n_{j+1}<\log\left(n_{j+1}^{C^{1/2}n_{j+1}}\right)<-\log|\lambda-\eta|<q^{1/\epsilon}<n_{j}^{2/\epsilon}.\label{eq:A.5.7}
\end{equation}
Thus, (\ref{eq:comp of n_j =000026 n_j+1}) is satisfied once more,
completing the inductive construction of $\{n_{j}\}_{j\ge0}$.

We next aim to apply Proposition \ref{pro:A.3} in order to derive
the desired contradiction. Set
\[
j_{0}:=\lceil\log^{(2)}n_{0}\rceil\text{ and }N:=\left\lceil \exp^{(2)}\left(\log^{(2)}(j_{0}+1)+C^{2}\right)\right\rceil ,
\]
where $\log^{(2)}$ stands for the composition of the $\log$ function
with itself, and similarly for $\exp^{(2)}$. Additionally, for each
$1\le j\le N$ set $K_{j}:=\frac{C\log n_{j}}{\log\lambda_{1}^{-1}}$.
By (\ref{eq:A.5.3}),
\[
H\left(\mu_{\lambda}^{(n_{j})};\lambda_{1}^{K_{j}n_{j}}\mid\lambda_{1}^{10n_{j}}\right)\ge\epsilon n_{j}.
\]
Moreover, by (\ref{eq:comp of n_j =000026 n_j+1}) and since $C\ll n_{0}\le n_{j}$,
we also have $\lambda_{1}^{-n_{1}}\ge\max\{2,\lambda_{1}^{-2}\}$,
$n_{j}\ge C(\log K_{j})^{2}$ and 
\[
n_{j+1}\ge\left\lceil \frac{Cn_{j}\log n_{j}}{\log\lambda_{1}^{-1}}\right\rceil \ge K_{j}n_{j}.
\]
Thus, by Proposition \ref{pro:A.3},
\begin{equation}
\sum_{j=1}^{N}\frac{1}{\log K_{j}\log\log K_{j}}\le C\left(1+\frac{1}{n_{1}}\sum_{j=1}^{N}\log K_{j}\right).\label{eq:A.5.5}
\end{equation}

Note that by applying (\ref{eq:comp of n_j =000026 n_j+1}) successively,
we get $n_{j}\le n_{0}^{2^{j}\epsilon^{-j}}$ for each $j\ge1$. From
this, since $\epsilon^{-1}\ll C\ll n_{0}$, and by the estimates carried
out at the end of \cite[Section 7.3, proof of Theorem 3.1]{rapaport20203maps},
it follows that
\[
\sum_{j=1}^{N}\frac{1}{\log K_{j}\log\log K_{j}}\ge\frac{\epsilon}{12}C^{2}\:\text{ and }\:\frac{1}{n_{1}}\sum_{j=1}^{N}\log K_{j}\le1.
\]
This together with (\ref{eq:A.5.5}) contradicts $\epsilon^{-1}\ll C$,
which completes the proof of the theorem.
\end{proof}

\section{\label{sec:Proof-of-main result}Proof of main result}

In this section, we prove Theorem \ref{thm:main gen thm}. Sections
\ref{subsec:Mahler-measure-and}, \ref{subsec:A-consequence-of dim drop},
and \ref{subsec:Number-theoretic-results} contain necessary preparations.
The proof of the theorem, which is an extension of the argument given
in \cite{Var-Bernoulli} and its modification found in \cite[Appendix A]{rapaport20203maps},
is carried out in Section \ref{subsec:Proof-of-main-result}.

\subsection{\label{subsec:Mahler-measure-and}Mahler measure and a lower bound
on random walk entropy}

Let $\alpha\in\mathbb{C}$ be an algebraic number with minimal polynomial
\[
P(X)=b(X-\alpha_{1})...(X-\alpha_{n})\in\mathbb{Z}[X],
\]
so that $b$ is the leading coefficient and $\alpha_{1},...,\alpha_{n}$
are the roots (including $\alpha$). The Mahler measure of $\alpha$
is defined by,
\[
M(\alpha):=|b|\prod_{k=1}^{n}\max\left\{ 1,|\alpha_{k}|\right\} .
\]

The following theorem follows directly from \cite[Proposition 13]{BV-entropy}.
For the details, we refer to the proof of \cite[Theorem 9]{Var-Bernoulli}.
For $\eta\in\Omega$ and $1\le j\le d$, recall the notation $\Phi_{j}^{\eta}$
from Section \ref{subsec:setup-and add notat}.
\begin{thm}
\label{thm:lb on RW-ent for large Mahler}Let $1\le j_{0}\le d$ and
$h\in(0,H(p))$ be given. Then there exists $M>1$, depending only
on $h$, $\{a_{i,j_{0}}\}_{i\in\Lambda}$ and $p$, so that $h_{RW}(\Phi_{j_{0}}^{\eta},p)>h$
for all $(\eta_{1},...,\eta_{d})=\eta\in\Omega$ such that $\eta_{j_{0}}$
is algebraic with $M(\eta_{j_{0}})\ge M$.
\end{thm}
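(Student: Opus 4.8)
\emph{Step 1: reduction to dimension one.} First I would note that $h_{RW}(\Phi_{j_0}^{\eta},p)$ depends only on the homogeneous affine IFS $\Phi_{j_0}^{\eta}=\{t\mapsto\eta_{j_0}t+a_{i,j_0}\}_{i\in\Lambda}$ on $\mathbb{R}$, hence only on the single number $\eta_{j_0}\in(0,1)$, the finite set of integers $\{a_{i,j_0}\}_{i\in\Lambda}$ and the probability vector $p$; so the statement is purely one-dimensional. (One should keep in mind that it is only non-vacuous when the $a_{i,j_0}$ are pairwise distinct, since otherwise $h_{RW}(\Phi_{j_0}^{\eta},p)$ is bounded away from $H(p)$ uniformly in $\eta$ --- but this distinctness is forced by the standing hypothesis that $\Phi_{j_0}^{\lambda}$ has no exact overlaps.) In the special case $\Lambda=\{+,-\}$, $a_{i,j_0}=\pm1$, $p=(1/2,1/2)$ the statement is exactly \cite[Theorem 9]{Var-Bernoulli}, and the plan is to run that argument in the present, slightly more general, setting.

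\emph{Step 2: the Breuillard--Varj\'o input.} Write $\beta:=\eta_{j_0}$ and $b_i:=a_{i,j_0}$, and assume $\beta$ algebraic. The essential ingredient is \cite[Proposition 13]{BV-entropy}: it provides a lower bound for the random walk entropy of $\{t\mapsto\beta t+b_i\}_{i\in\Lambda}$ of the form $\Theta\big(M(\beta)\big)$, where $M(\beta)$ is the Mahler measure of $\beta$ and $\Theta\colon[1,\infty)\to[0,H(p))$ is a function depending only on $\{b_i\}_{i\in\Lambda}$ and $p$ with $\Theta(M)\to H(p)$ as $M\to\infty$. This is precisely the input through which \cite[Theorem 9]{Var-Bernoulli} is obtained, and since the passage from \cite[Proposition 13]{BV-entropy} to such a bound is purely formal and insensitive to the particular digit set, it carries over verbatim. (It is also worth recording that $M(\beta)=M(\beta^{-1})$, so it makes no difference whether one measures the contraction ratio or its reciprocal.)

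\emph{Step 3: conclusion, and the main obstacle.} Given $h\in(0,H(p))$, since $\Theta(M)\to H(p)>h$ I would choose $M>1$ --- depending only on $h$, $\{a_{i,j_0}\}_{i\in\Lambda}$ and $p$ --- so large that $\Theta(M')>h$ for all $M'\ge M$. Then for every $\eta\in\Omega$ with $\eta_{j_0}$ algebraic and $M(\eta_{j_0})\ge M$ we get $h_{RW}(\Phi_{j_0}^{\eta},p)\ge\Theta\big(M(\eta_{j_0})\big)>h$, which is the claim. All the real content is concentrated in \cite[Proposition 13]{BV-entropy}, which I would use as a black box: the difficulty it resolves is to control the random walk entropy at \emph{all} word lengths at once --- recall from \ref{eq:h_RW =00003D inf} that $h_{RW}$ is an infimum over $n$ --- whereas a cheap Mahler-measure size estimate only shows that the semigroup generated by $\{t\mapsto\beta t+b_i\}$ has no relations among words of length up to a bound growing with $M(\beta)$, which does not suffice to bound the infimum. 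Beyond invoking that proposition, the only point requiring care on our side is the reduction in Step 1, which is immediate from the definitions.
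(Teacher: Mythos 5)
Your proposal is correct and takes exactly the route the paper does: the paper likewise derives this theorem directly from \cite[Proposition 13]{BV-entropy}, referring to the proof of \cite[Theorem 9]{Var-Bernoulli} for the (purely one-dimensional) details, which is precisely your Steps 2--3. Your Step 1 reduction and the side remark that the conclusion is only meaningful when the $a_{i,j_0}$ are pairwise distinct (guaranteed in the application by the no-exact-overlaps hypothesis on $\Phi_{j_0}^{\lambda}$) are accurate observations that the paper leaves implicit.
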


\subsection{\label{subsec:A-consequence-of dim drop}A consequence of dimension
drop}
\begin{prop}
\label{prop:exists polynomials}Suppose that $\dim\mu_{\lambda}<\gamma$
and $\dim\pi_{J}\mu_{\lambda}=|J|$ for each proper subset $J$ of
$[d]$. Then for every $\epsilon>0$ there exists $N\ge1$, such that
for every $n\ge N$ and $1\le j\le d$ there exists $0\ne P_{j}\in\mathcal{P}_{L_{0}}^{(n)}$
with $|P_{j}(\lambda_{j})|<\epsilon^{n}$.
\end{prop}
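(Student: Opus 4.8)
The plan is to deduce Proposition \ref{prop:exists polynomials} from the machinery built up in Section \ref{sec:Algebraic-approximation}, specifically Proposition \ref{pro:A.5} (or rather its underlying mechanism) together with Theorem \ref{thm:cond ent tends to 0}. The heart of the matter is that the dimension drop hypothesis $\dim\mu_\lambda<\gamma$, combined with full-dimensionality of all proper projections, forces the discrete approximations $\mu_\lambda^{(n)}$ to have, at suitable scales, entropy strictly below the random-walk entropy $h_{RW}(\Phi^\lambda,p)$; and low entropy at a fine scale is exactly what produces a near-linear relation among the powers of $\lambda_j$, i.e. a small-value polynomial.

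First I would fix $\epsilon>0$ and reduce to producing, for each large $n$ and each coordinate $j$, a scale $t=t(n)$ with $t\le\epsilon'^{\,n}$ (for a slightly smaller $\epsilon'$) such that the one-dimensional average entropy $\frac1n H\big(\pi_j\mu_\lambda^{(n)};t\big)$ is strictly less than $H(p)$. Granting this, the pigeonhole/counting argument already used in the proof of Proposition \ref{pro:A.5} (the "argument appearing in the proof of \cite[Proposition A.5]{rapaport20203maps}") applies verbatim in one variable: the number of distinct values $\varphi_u^\lambda(0)_j=\sum_{k<n}a_{\xi_k,j}\lambda_j^k$ is at most $2^{H(\pi_j\mu_\lambda^{(n)};t)+o(n)}<|\Lambda|^n$ up to an additive $O(n)$ from the averaging, so two distinct words $u_1,u_2\in\Lambda^n$ land within $t$ of each other in the $j$-th coordinate, and their difference is a nonzero polynomial in $\mathcal{P}_{L_0}^{(n)}$ evaluated at $\lambda_j$ with absolute value at most a bounded multiple of $t$. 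Choosing $t$ a bit smaller than $\epsilon^n$ at the outset absorbs the constant.

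The remaining point — and the main obstacle — is to establish the entropy bound $\frac1n H\big(\pi_j\mu_\lambda^{(n)};t\big)<nH(p)$ at a scale $t$ that is exponentially small but not too small (say $t\approx \epsilon_1^{n}$ for a fixed $\epsilon_1>0$, independent of $n$). Here is where the structural hypotheses enter. By Theorem \ref{thm:cond ent tends to 0}, for any $q>1$ one has $\frac1n H\big(\mu_\lambda^{(n)};\lambda_1^{qn}\mid\lambda_1^{n}\big)\to 0$; iterating across $O(1/\log\epsilon_1^{-1})$ consecutive dyadic-type scale blocks between scale $1$ and scale $\epsilon_1^n$ shows that $\frac1n H(\mu_\lambda^{(n)};\epsilon_1^n\mid 1)$ is as small as we wish for $n$ large, i.e. the full entropy at the fine scale $\epsilon_1^n$ differs from the coarse-scale entropy $\frac1n H(\mu_\lambda^{(n)};1)=\frac1n H(\mu_\lambda,\mathcal E_0)+O(1/n)$ by at most $\delta$. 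On the other hand, the dimension-drop hypothesis together with \eqref{eq:ub for LY-dim} (with $m=d-1$) and Lemma \ref{lem:entr-dimens} gives $\frac1n H(\mu_\lambda^{(n)};\lambda_1^n)\to$ a value strictly below $H(p)$: indeed $\kappa<H(p)$ as noted at the start of the proof of Theorem \ref{thm:alg approx}, and by Lemma \ref{lem:entr-dimens} the normalized entropy of $\mu_\lambda$ at the non-conformal scale $\mathcal{E}_n$ tends to $\kappa$. Combining, $\frac1n H(\mu_\lambda^{(n)};\epsilon_1^n)\le \kappa+2\delta<H(p)$ for $n$ large; and since $H(\pi_j\mu_\lambda^{(n)};\cdot)\le H(\mu_\lambda^{(n)};\cdot)$ coordinatewise (entropy of a marginal is at most entropy of the joint), the same bound holds for each $\pi_j\mu_\lambda^{(n)}$. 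This supplies the input to the previous paragraph. One subtlety to handle carefully: the scale $\epsilon_1^n$ must be translated between the "isotropic" scale parametrization $t$ used in Section \ref{subsec:Convolution-factors} and the non-conformal partitions $\mathcal E_n$; since $\chi_1\le\chi_j$ this only costs fixed multiplicative constants in the exponent, which is why it suffices to take $\epsilon_1$ (hence $t$) genuinely smaller than $\epsilon$ from the start.
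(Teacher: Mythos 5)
Your proof is essentially the contrapositive of the paper's own argument, built from the same tools: Lemma \ref{lem:entr-dimens} and (\ref{eq:fini-sum-entr}) give $\frac{1}{n}H\bigl(\mu_\lambda^{(n)},\mathcal{E}_n\bigr)\to\kappa$, Theorem \ref{thm:cond ent tends to 0} controls the conditional entropy from scale $\lambda_1^n$ down to $\lambda_1^{qn}$, (\ref{eq:ub for LY-dim}) with $j=d-1$ turns $\dim\mu_\lambda<\gamma$ into $\kappa<H(p)$, and the conversion of low entropy at a fine scale into a small-value polynomial is exactly the step inside the proof of Proposition \ref{pro:A.5}. The paper supposes that no small polynomial exists along a subsequence, derives $\frac{1}{n_k}H(\mu_\lambda^{(n_k)},\mathcal{E}_{qn_k})=H(p)$, and contradicts $\kappa<H(p)$; you derive $\frac{1}{n}H(\mu_\lambda^{(n)};\epsilon_1^n)\le\kappa+2\delta<H(p)$ directly and then pigeonhole. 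Same route, opposite direction.

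Two slips in the write-up, neither fatal to the plan. First, the intermediate claim that $\frac{1}{n}H\bigl(\mu_\lambda^{(n)};\epsilon_1^n\mid 1\bigr)$ becomes arbitrarily small is false: that quantity tends to $\kappa$, because the contribution between scales $1$ and $\lambda_1^n$ is $\kappa n+o(n)$. What Theorem \ref{thm:cond ent tends to 0} controls is $\frac{1}{n}H\bigl(\mu_\lambda^{(n)};\lambda_1^{qn}\mid\lambda_1^n\bigr)$, and the correct coarse-scale reference is $\frac{1}{n}H\bigl(\mu_\lambda^{(n)};\lambda_1^n\bigr)=\frac{1}{n}H(\mu_\lambda,\mathcal{E}_n)+O(1/n)\to\kappa$, not $\mathcal{E}_0$; your displayed ``Combining'' line already uses the correct decomposition, so the conclusion $\kappa+2\delta$ survives, and no iteration across scale blocks is needed since the theorem holds for each fixed $q>1$. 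Second, entropy does not bound the number of distinct values of $\varphi_u^\lambda(0)_j$ when the weights $p_u$ are non-uniform, so the ``$2^{H}$'' count is not valid; the correct step, as in Proposition \ref{pro:A.5}, is the maximum-entropy observation: if the average entropy at scale $t$ is strictly below $nH(p)$ then two distinct words of positive weight fall in the same cell, and their coordinatewise difference gives a nonzero $P_j\in\mathcal{P}_{L_0}^{(n)}$ with $|P_j(\lambda_j)|\lesssim t$ (nonzero because $\Phi_j^\lambda$ has no exact overlaps, which is in force throughout this part of the paper). With these two corrections your argument agrees with the paper's.
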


\begin{proof}
Assume by contradiction that the proposition is false. Then there
exist $\epsilon>0$, $1\le j\le d$, and an increasing sequence $\{n_{k}\}_{k\ge1}\subset\mathbb{Z}_{>0}$,
such that $|P(\lambda_{j})|\ge\epsilon^{n_{k}}$ for all $0\ne P\in\mathcal{P}_{L_{0}}^{(n_{k})}$.
From this it follows easily that there exists $q>1$ so that
\[
\frac{1}{n_{k}}\left(\mu_{\lambda}^{(n_{k})},\mathcal{E}_{qn_{k}}\right)=H(p)\text{ for each }k\ge1,
\]
where $\mu_{\lambda}^{(n_{k})}$ is defined in Section \ref{subsec:setup-and add notat}.
On the other hand, by Lemma \ref{lem:entr-dimens}, (\ref{eq:fini-sum-entr}),
and Theorem \ref{thm:cond ent tends to 0},
\[
\lim_{k\to\infty}\frac{1}{n_{k}}H\left(\mu_{\lambda}^{(n_{k})},\mathcal{E}_{qn_{k}}\right)=\kappa.
\]
Hence $\kappa=H(p)$, which, by (\ref{eq:ub for LY-dim}) with $j=d-1$,
implies that $\dim\mu_{\lambda}=\gamma$. But this contradicts our
assumption, thus completing the proof of the proposition.
\end{proof}

\subsection{\label{subsec:Number-theoretic-results}Number theoretic results}

We shall need the following two lemmas from \cite{rapaport20203maps}.
\begin{lem}[{\cite[Lemma 4.2]{rapaport20203maps}}]
\label{lem:There-is-function r}There is a function $r:\mathbb{Z}_{>0}\rightarrow(0,1)$
such that $\underset{k\rightarrow\infty}{\lim}r(k)=1$ and the following
holds. Let $l,n\ge1$ and $0\ne P\in\mathcal{P}_{l}^{(n)}$ be given.
Then there are at most $k\left(1+\frac{\log l}{\log(k+1)}\right)$
nonzero roots of $P$ of absolute value less than $r(k)$.
\end{lem}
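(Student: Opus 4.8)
The plan is to prove this by Jensen's formula, applied at a radius just below $1$ that is calibrated against $k$. First I would reduce to the case $P(0)\neq 0$: writing $P(X)=X^{j_{0}}\tilde{P}(X)$, where $j_{0}$ is the order of vanishing of $P$ at the origin, the polynomial $\tilde{P}$ still lies in $\mathcal{P}_{l}^{(n)}$, satisfies $\tilde{P}(0)\in\mathbb{Z}\setminus\{0\}$, and has the nonzero roots of $P$ (with multiplicity) as its full root set. If $\tilde{P}$ is constant there are no such roots and the statement is vacuous, so assume $\deg\tilde{P}\ge1$; it then suffices to bound the number $N(r)$ of roots of $\tilde{P}$ of modulus strictly less than $r$.

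Fix $k\ge1$ and set $\rho:=\frac{k}{k+1}\in(0,1)$. Since every coefficient of $\tilde{P}$ has absolute value at most $l$, we have $\max_{|z|=\rho}|\tilde{P}(z)|\le l\sum_{j\ge0}\rho^{j}=\frac{l}{1-\rho}=l(k+1)$. Jensen's formula for $\tilde{P}$ on $\{|z|\le\rho\}$ — which remains valid even if $\tilde{P}$ has zeros on the circle $|z|=\rho$ — together with $|\tilde{P}(0)|\ge1$ and the fact that an average does not exceed a maximum, yields
\[
\sum_{i:\,|\alpha_{i}|<\rho}\log\frac{\rho}{|\alpha_{i}|}=\frac{1}{2\pi}\int_{0}^{2\pi}\log|\tilde{P}(\rho e^{i\theta})|\,d\theta-\log|\tilde{P}(0)|\le\log\bigl(l(k+1)\bigr).
\]
For any $0<r<\rho$, each root with $|\alpha_{i}|<r$ contributes at least $\log(\rho/r)>0$ to the left-hand side and every remaining summand is nonnegative, so $N(r)\log(\rho/r)\le\log\bigl(l(k+1)\bigr)$, i.e.
\[
N(r)\le\frac{\log l+\log(k+1)}{\log(\rho/r)}.
\]

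Finally I would take $r(k):=\rho\,(k+1)^{-1/k}=\frac{k}{k+1}(k+1)^{-1/k}$, so that $\log(\rho/r(k))=\frac{1}{k}\log(k+1)$ and the last display becomes exactly $N(r(k))\le k\bigl(1+\frac{\log l}{\log(k+1)}\bigr)$, the claimed bound. Both factors $\frac{k}{k+1}$ and $(k+1)^{-1/k}$ lie in $(0,1)$ for $k\ge1$, so $r(k)\in(0,1)$ and $r(k)<\rho$ (as required above), and $\log r(k)=\log\frac{k}{k+1}-\frac{1}{k}\log(k+1)\to0$, whence $r(k)\to1$. The argument presents no serious technical obstacle — Jensen's formula and the coefficient estimate are standard — and the only point requiring care is the joint calibration of the two radii $\rho$ and $r(k)$, arranged so that the $l$-independent part of the bound on $N(r(k))$ is exactly $k$ while the $l$-dependent part is exactly $\frac{k\log l}{\log(k+1)}$, reproducing the stated expression on the nose.
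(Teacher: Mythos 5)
Your proof is correct. The reduction to $P(0)\neq 0$ by stripping the factor $X^{j_0}$ is valid and keeps the polynomial in $\mathcal{P}_l^{(n)}$ with constant term a nonzero integer; the bound $\max_{|z|=\rho}|\tilde P(z)|\le l/(1-\rho)=l(k+1)$ is exact; Jensen's formula applied at radius $\rho=k/(k+1)$ with $|\tilde P(0)|\ge 1$ gives $\sum_{|\alpha_i|<\rho}\log(\rho/|\alpha_i|)\le\log\bigl(l(k+1)\bigr)$; and the choice $r(k)=\rho\,(k+1)^{-1/k}$ turns $N(r(k))\log(\rho/r(k))\le\log\bigl(l(k+1)\bigr)$ into precisely $k\bigl(1+\log l/\log(k+1)\bigr)$. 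The verifications $r(k)\in(0,1)$, $r(k)<\rho$, and $r(k)\to 1$ are all correct, and the argument is insensitive to the choice of log base (the paper uses base $2$). The paper itself only cites this lemma from \cite{rapaport20203maps} without reproducing a proof, but the Jensen's-formula argument at a radius $\rho=k/(k+1)$ calibrated against a slightly smaller $r(k)$ is the standard—and essentially the only—route to a bound of exactly this shape; your proposal reconstructs it faithfully.
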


The next lemma and its proof were communicated to the authors of \cite{rapaport20203maps}
by Vesselin Dimitrov.
\begin{lem}[{\cite[Lemma 4.6]{rapaport20203maps}}]
\label{lem:vesselin lemma}Let $\xi,\eta\in[0,1]$ and $n,n',l,k\in\mathbb{Z}_{>0}$.
Let $0\ne P\in\mathcal{P}_{l}^{(n')}$. Let $\alpha$ be a number
that satisfies
\[
\log\alpha>\frac{(n(k+1)+(k+2))\log n'+(n+1)\log l+\log2}{n'}.
\]
Assume that $\eta\ne\xi$ and that $\eta$ is algebraic of degree
at most $n$. Assume that
\[
\left(\alpha M(\eta)\right)^{n'/k}\left|P(\xi)\right|^{1/k}\le|\xi-\eta|\le\left(\alpha M(\eta)\right)^{-n'}.
\]
Then $\eta$ is a zero of $P$ of order at least $k$.
\end{lem}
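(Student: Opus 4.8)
The plan is a Liouville‑type contradiction argument. We may assume $n'\ge2$, since for $n'=1$ the polynomial $P$ is a nonzero integer constant and the hypothesis on $\alpha$ (which forces $\alpha>1$, hence $\alpha M(\eta)>1$) makes the displayed interval for $|\xi-\eta|$ empty. Suppose $\eta$ is \emph{not} a zero of $P$ of order at least $k$, and let $m\in\{0,\dots,k-1\}$ be the smallest index with $P^{(m)}(\eta)\neq0$, so $P^{(j)}(\eta)=0$ for $j<m$. The key object is $R:=\frac{1}{m!}P^{(m)}=\sum_{i=m}^{\deg P}\binom{i}{m}a_iX^{i-m}$, where $a_0,\dots,a_{\deg P}$ are the coefficients of $P$. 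Since the $\binom{i}{m}$ are integers, $R\in\mathbb{Z}[X]$, with $\deg R<n'$ and — crucially, because $m\le k-1$ — with $\|R\|_1$ (the sum of absolute values of its coefficients) bounded polynomially, $\|R\|_1\le l\binom{n'}{m+1}\le l(n')^{k}$ by the hockey‑stick identity; and $R(\eta)\neq0$ by the choice of $m$. Let $g\in\mathbb{Z}[X]$ be the primitive minimal polynomial of $\eta$, of degree $n_\eta\le n$, leading coefficient $b$, and roots $\eta=\eta_1,\dots,\eta_{n_\eta}$; since $\eta\in[0,1]$ we have $\max\{1,|\eta_1|\}=1$, so $M(\eta)=|b|\prod_{i\ge2}\max\{1,|\eta_i|\}$.

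\emph{Lower bound for $|R(\eta)|$.} As $g$ is irreducible and $g\nmid R$, the resultant $\mathrm{Res}(g,R)=b^{\deg R}\prod_{i=1}^{n_\eta}R(\eta_i)$ is a nonzero integer, hence of absolute value $\ge1$. Bounding $|R(\eta_i)|\le\|R\|_1\max\{1,|\eta_i|\}^{\deg R}$ for the conjugates $i\ge2$ and using $|b|\prod_{i\ge2}\max\{1,|\eta_i|\}=M(\eta)$, one gets
\[
|R(\eta)|\ \ge\ \|R\|_1^{-(n_\eta-1)}M(\eta)^{-\deg R}\ \ge\ \bigl(l(n')^{k}\bigr)^{-(n-1)}M(\eta)^{-(n'-1)}.
\]

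\emph{Upper bound for $|R(\eta)|$.} Expanding $P$ in a Taylor series about $\eta$ and using $P^{(j)}(\eta)=0$ for $j<m$, the $j=m$ term isolates as $R(\eta)(\xi-\eta)^m=P(\xi)-\sum_{j>m}\frac{1}{j!}P^{(j)}(\eta)(\xi-\eta)^j$. The right side of the interval hypothesis, $|\xi-\eta|\le(\alpha M(\eta))^{-n'}$, together with $n'\log\alpha>2\log n'$ (immediate from the bound on $\alpha$), gives $n'|\xi-\eta|<\frac12$; since each Taylor‑tail term is at most $l\binom{n'}{j+1}|\xi-\eta|^j\le l(n')^{j+1}|\xi-\eta|^j$, comparison with a geometric series bounds the tail by $3l(n')^{k+1}|\xi-\eta|^{m+1}$, up to the constant. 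The left side of the hypothesis, $|\xi-\eta|^{k}\ge(\alpha M(\eta))^{n'}|P(\xi)|$, together with $k-m\ge1$ and $|\xi-\eta|<1$, bounds $|\xi-\eta|^{-m}|P(\xi)|\le(\alpha M(\eta))^{-n'}$. Dividing the displayed identity by $(\xi-\eta)^m$ and combining these estimates with $|\xi-\eta|\le(\alpha M(\eta))^{-n'}$ yields $|R(\eta)|\le 3l(n')^{k+1}(\alpha M(\eta))^{-n'}$.

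\emph{Conclusion.} Comparing the two bounds on $|R(\eta)|$ and using $M(\eta)\ge1$ to cancel $M(\eta)^{-(n'-1)}$ gives $\alpha^{n'}\le 3l^{n}(n')^{kn+1}$, i.e. $n'\log\alpha\le\log3+n\log l+(kn+1)\log n'$. Subtracting this from the hypothesis $n'\log\alpha>(n(k+1)+(k+2))\log n'+(n+1)\log l+\log2$ forces $(n+k+1)\log n'+\log l<\log\frac32$, which is impossible once $n'\ge2$; this contradiction proves the lemma. The step I expect to be the main obstacle is the upper bound on $|R(\eta)|$: the ``budget'' in the hypothesis on $\alpha$ is only of size $O(nk\log n')$, so every combinatorial factor — both $\|R\|_1$ and the binomials in the Taylor tail — must be kept polynomial, not exponential, in $n'$. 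This is precisely what the two features $m\le k-1$ (keeping $\binom{\,\cdot\,}{m}$ of size $(n')^{O(k)}$) and $n'|\xi-\eta|\ll1$ (making the Taylor tail beyond the $j=m$ term geometrically negligible) are there to supply; arranging for the right endpoint of the interval to tame the tail while the left endpoint absorbs $|P(\xi)|$ is the crux of the bookkeeping.
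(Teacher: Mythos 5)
Your proof is correct. You use the canonical Liouville-type argument: isolate $R=\tfrac{1}{m!}P^{(m)}$ (where $m<k$ is the exact vanishing order), obtain the lower bound $|R(\eta)|\ge\|R\|_1^{-(n_\eta-1)}M(\eta)^{-\deg R}$ from the nonvanishing integer resultant $\mathrm{Res}(g,R)$, obtain the upper bound from the Taylor expansion of $P$ about $\eta$ with the $j=m$ term isolated (where the interval's right endpoint makes $n'|\xi-\eta|<\tfrac12$ and tames the geometric tail, while the left endpoint absorbs $|P(\xi)(\xi-\eta)^{-m}|$), and then verify that the hypothesized lower bound on $\log\alpha$ is exactly large enough to make the two bounds incompatible once $n'\ge2$ (the $n'=1$ case being vacuous). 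I checked the bookkeeping in the final comparison and it closes with room to spare, $(n+k+1)\log n'+\log l<\log\tfrac32$ being the required absurdity. The paper does not reproduce a proof of this lemma, only cites it (attributing it to V.~Dimitrov), but this is precisely the standard route for a statement of this form, and I see no reason to think the cited argument differs in substance.
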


\subsection{\label{subsec:Proof-of-main-result}Proof of Theorem \ref{thm:main gen thm}}

Recall that $K_{\Phi^{\lambda}}$ denotes the attractor of $\Phi^{\lambda}$.
Note that, by the definitions of the affinity dimension (see \cite{falconer1988hausdorff})
and Lyapunov dimension (see Section \ref{subsec:setup-and add notat}),
in order to show that $\dim_{H}K_{\Phi^{\lambda}}=\min\left\{ d,\dim_{A}\Phi^{\lambda}\right\} $,
it suffices to prove $\dim\mu_{\lambda}=\min\left\{ d,\dim_{L}(\Phi^{\lambda},p)\right\} $
for $p$ with equal weights. Moreover, by rescaling the IFS if necessary,
it suffices to prove Theorem \ref{thm:main gen thm} in the case of
integral translations. Thus we need to establish the following statement.
\begin{thm*}
Suppose that $\Phi_{j}^{\lambda}$ has no exact overlaps for each
$1\le j\le d$. Then $\dim\mu_{\lambda}=\gamma$.
\end{thm*}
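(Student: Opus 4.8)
The plan is to argue by contradiction, assuming $\dim\mu_\lambda < \gamma$, and to bootstrap this into a contradiction by combining the three main ingredients assembled above: the algebraic approximation theorem (Theorem \ref{thm:alg approx}), the lower bound on random walk entropy via Mahler measure (Theorem \ref{thm:lb on RW-ent for large Mahler}), and the Diophantine inputs (Lemmas \ref{lem:There-is-function r} and \ref{lem:vesselin lemma}), together with the consequence of dimension drop recorded in Proposition \ref{prop:exists polynomials}. This is precisely the structure of the proof of \cite[Theorem A.1]{rapaport20203maps} in the case $d=1$, and since every component has now been promoted to the $d$-dimensional non-conformal setting, the argument should transfer with only bookkeeping changes.

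First I would reduce to the case where some $\lambda_{j_0}$ is transcendental. Indeed, if all $\lambda_j$ are algebraic, then (since the $\Phi_j^\lambda$ have no exact overlaps, which for algebraic parameters is equivalent to exponential separation, as noted in the remark following the definition) the conclusion $\dim\mu_\lambda = \gamma$ follows directly from \cite[Theorem 1.7]{Rap_SA_diag}. So assume $\lambda_{j_0}$ transcendental. Next, I would dispose of the case where some proper projection has a dimension drop: if $\dim\pi_J\mu_\lambda < |J|$ for some $\emptyset \ne J \subsetneq [d]$, then $\pi_J\mu_\lambda$ is (an embedded copy of) a self-affine measure satisfying the hypotheses of Theorem \ref{thm:main gen thm} in dimension $|J| < d$, so by the induction hypothesis $\dim\pi_J\mu_\lambda = \min\{|J|, \dim_L(\Phi_J^\lambda, p)\}$; combining this with the Ledrappier--Young formula for diagonal self-affine measures from \cite{FH-dimension} forces $\dim\mu_\lambda = \dim_L(\Phi^\lambda,p) \ge \gamma$, contradicting our assumption (this is the reduction already sketched in Section \ref{subsec:About-the-proof}). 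Hence we may assume $\dim\pi_J\mu_\lambda = |J|$ for every proper $J \subsetneq [d]$, and also $\dim\mu_\lambda < \gamma \le d$, so all the hypotheses of Theorem \ref{thm:alg approx} hold.

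With these reductions in place, I would run the contradiction argument. Fix $h \in (\kappa, H(p))$ — possible since $\dim\mu_\lambda < \gamma$ forces $\kappa < H(p)$ via (\ref{eq:ub for LY-dim}) with $j = d-1$ — and let $M_0 = M_0(h)$ be the Mahler measure threshold from Theorem \ref{thm:lb on RW-ent for large Mahler}, applied with $j_0$. Now apply Theorem \ref{thm:alg approx} with a small $\epsilon > 0$ (chosen so that $\kappa + \epsilon < h$) and large $N$: this produces, for each such $N$, an integer $n \ge N$ and $\eta \in \Omega$ with each $\eta_j$ a root of some nonzero $P_j \in \mathcal{P}_{L_0}^{(n)}$, with $h_{RW}(\Phi^\eta, p) < \kappa + \epsilon < h$, and with $|\lambda - \eta| \le \exp(-n^{1/\epsilon})$. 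Since $h_{RW}(\Phi^\eta,p) \ge h_{RW}(\Phi_{j_0}^\eta, p)$, the entropy bound forces $h_{RW}(\Phi_{j_0}^\eta,p) < h$, so by the contrapositive of Theorem \ref{thm:lb on RW-ent for large Mahler} we get $M(\eta_{j_0}) < M_0$. On the other hand, Proposition \ref{prop:exists polynomials} gives, for any fixed $\epsilon_1 > 0$ and all large $n$, a nonzero $Q_{j_0} \in \mathcal{P}_{L_0}^{(n)}$ with $|Q_{j_0}(\lambda_{j_0})| < \epsilon_1^n$. The heart of the argument is then to play $\eta_{j_0}$ against $\lambda_{j_0}$: $\eta_{j_0}$ is algebraic of bounded degree (at most $n$) and bounded Mahler measure ($< M_0$), it is extremely close to $\lambda_{j_0}$ (distance $\le \exp(-n^{1/\epsilon})$), and $Q_{j_0}$ nearly vanishes at $\lambda_{j_0}$ — this is exactly the configuration that Lemma \ref{lem:vesselin lemma} is designed to exploit. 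Choosing the auxiliary parameters ($\alpha$, $k$, and the relation between $\epsilon$, $\epsilon_1$, $n$) appropriately, Lemma \ref{lem:vesselin lemma} forces $\eta_{j_0}$ to be a zero of $Q_{j_0}$ of high order $k$; but then $Q_{j_0}$ — a polynomial of degree $< n$ with coefficients bounded by $L_0$ — has a high-order zero at $\eta_{j_0}$, which (by counting multiplicities, or by applying Lemma \ref{lem:There-is-function r} to control the number of small roots and the degree constraint) is impossible once $k$ exceeds a bound depending only on $L_0$ and the degree of $\eta_{j_0}$. Since $N$ (hence $n$) was arbitrary, letting $N \to \infty$ pushes $k \to \infty$ and produces the contradiction.

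The main obstacle, and the place where the most care is needed, is the final Diophantine juggling: one must verify that the quantitative hypotheses of Lemma \ref{lem:vesselin lemma} — namely the two-sided bound $(\alpha M(\eta_{j_0}))^{n/k}|Q_{j_0}(\lambda_{j_0})|^{1/k} \le |\lambda_{j_0} - \eta_{j_0}| \le (\alpha M(\eta_{j_0}))^{-n}$ together with the lower bound on $\log\alpha$ — can all be met simultaneously, and that the resulting order of vanishing $k$ genuinely tends to infinity with $n$. This requires carefully tracking how the super-polynomial closeness $|\lambda - \eta| \le \exp(-n^{1/\epsilon})$ from Theorem \ref{thm:alg approx}(3) beats the merely exponential quantities $\epsilon_1^n$ and $M_0^n$ appearing elsewhere — which is exactly why the $n^{1/\epsilon}$ (rather than, say, $Cn$) in the conclusion of Theorem \ref{thm:alg approx} is essential. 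All of this is carried out in \cite[Section 7.3]{rapaport20203maps} for $d = 1$; since $\mu_\lambda$, $\kappa$, $h_{RW}$, and the polynomial classes $\mathcal{P}_{L_0}^{(n)}$ all behave formally the same way in the present setup, I would follow that argument essentially verbatim, substituting Theorem \ref{thm:alg approx}, Theorem \ref{thm:lb on RW-ent for large Mahler}, and Proposition \ref{prop:exists polynomials} for their one-dimensional counterparts.
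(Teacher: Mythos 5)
Your proposal follows the paper's argument closely: induction on $d$, reduction to the transcendental case via \cite[Theorem 1.7]{Rap_SA_diag}, reduction to full-dimensional projections via the Ledrappier--Young formula, and then the contradiction built out of Theorem \ref{thm:alg approx}, Theorem \ref{thm:lb on RW-ent for large Mahler}, Proposition \ref{prop:exists polynomials}, Lemma \ref{lem:vesselin lemma}, and Lemma \ref{lem:There-is-function r}. However, the place you wave at -- ``the final Diophantine juggling'' -- is exactly where a genuine extra idea is required, and your narrative elides it in a way that would make the argument break if read literally.

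Here is the issue. You use a single scale $n$ both for the degree bound on $\eta_{j_{0}}$ (coming from Theorem \ref{thm:alg approx}) and for the polynomial $Q_{j_{0}}\in\mathcal{P}_{L_{0}}^{(n)}$ from Proposition \ref{prop:exists polynomials}. The lower bound in Lemma \ref{lem:vesselin lemma} requires
\[
\bigl(\alpha M(\eta_{j_{0}})\bigr)^{n'/k}\,|Q_{j_{0}}(\lambda_{j_{0}})|^{1/k}\le|\lambda_{j_{0}}-\eta_{j_{0}}|,
\]
and Theorem \ref{thm:alg approx} gives only an \emph{upper} bound on $|\lambda_{j_{0}}-\eta_{j_{0}}|$ -- the distance could be arbitrarily smaller than $\exp(-n^{1/\epsilon})$ -- so this inequality cannot be verified at that scale with a fixed $\epsilon_{1}$. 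The paper's fix is to treat the alg-approx scale (call it $q$) and the polynomial scale as \emph{different}: after producing $\eta$ with $\deg\eta_{j}<q$, $|\lambda-\eta|<2^{-q^{2}}$, and $M(\eta_{j_{0}})<M$, it picks a new integer $n$ adapted to the actual distance, namely $(2M)^{-n-1}\le|\lambda_{j_{0}}-\eta_{j_{0}}|<(2M)^{-n}$, and then invokes Proposition \ref{prop:exists polynomials} at this much larger scale to obtain $P\in\mathcal{P}_{L_{0}}^{(n)}$ with $|P(\lambda_{j_{0}})|\le(2M)^{-3Mn}\approx|\lambda_{j_{0}}-\eta_{j_{0}}|^{3M}$. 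This makes the lower bound hold with room to spare, while the upper bound holds by construction. The role of the super-polynomial closeness $\exp(-n^{1/\epsilon})$ is also slightly different from what you describe: it is used to guarantee that this new $n$ satisfies $n\gtrsim q^{2}/\log(2M)\gg q$, which is what allows the auxiliary inequality (\ref{eq:ineq for ves lem}) for $\alpha$ in Lemma \ref{lem:vesselin lemma} to be met with $\alpha=2$. Once $\eta_{j_{0}}$ is shown to be a zero of $P$ of order at least $M$, the contradiction with Lemma \ref{lem:There-is-function r} is obtained for a single $M$ fixed in advance (large enough so that $r(k)>\eta_{j_{0}}$ for suitable $k$ and $M>k(1+\log L_{0}/\log(k+1))$), rather than by a limiting argument as your ``letting $N\to\infty$ pushes $k\to\infty$'' suggests, though this is cosmetic. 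The adaptive change of scale, on the other hand, is essential and is the one idea missing from your writeup.
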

\begin{proof}
The proof is carried out by induction on $d$. Thus, assume that the
theorem holds whenever the dimension of the ambient space is strictly
less than $d$. Since $\dim_{L}(\Phi^{\lambda},p)$ is always an upper
bound for $\dim\mu_{\lambda}$, we only need to show that $\dim\mu_{\lambda}\ge\gamma$.

As shown in \cite[Section 6.4, proof of Theorem 1.7]{Rap_SA_diag},
by the induction hypothesis and the Ledrappier--Young formula, it
follows that $\dim\mu_{\lambda}=\dim_{L}(\Phi^{\lambda},p)$ whenever
$\dim\pi_{J}\mu_{\lambda}<|J|$ for some proper subset $J$ of $[d]$.
Thus, we may assume that $\dim\pi_{J}\mu_{\lambda}=|J|$ for each
$J\subsetneq[d]$. Moreover, by \cite[Theorem 1.7]{Rap_SA_diag} we
may suppose that $\lambda_{j_{0}}$ is transcendental for some $1\le j_{0}\le d$.

Assume by contradiction that $\dim\mu_{\lambda}<\gamma$. From this
and by (\ref{eq:ub for LY-dim}) with $j=d-1$, it follows that $\kappa<H(p)$.
Let 
\[
0<\epsilon<\frac{H(p)-\kappa}{2},
\]
let $M\in\mathbb{Z}_{>0}$ be large with respect to $\{a_{i,j}\}$,
$p$, $\lambda$ and $\epsilon$, and let $q_{0}\in\mathbb{Z}_{>0}$
be with $M\ll q_{0}$.

By Theorem \ref{thm:alg approx}, there exist an integer $q\ge q_{0}$
and $(\eta_{1},...,\eta_{d})=\eta\in\Omega$ such that $\eta_{j}$
is algebraic with $\deg\eta_{j}<q$ for each $1\le j\le d$,
\begin{equation}
h_{RW}(\Phi^{\eta},p)<\kappa+\epsilon<H(p)-\epsilon,\label{eq:ub of rw ent of Phi^eta}
\end{equation}
and $|\lambda-\eta|<2^{-q^{2}}$. From (\ref{eq:ub of rw ent of Phi^eta})
it clearly follows that $h_{RW}(\Phi_{j_{0}}^{\eta},p)<H(p)-\epsilon$.
Thus by Theorem \ref{thm:lb on RW-ent for large Mahler} we may assume
that $M(\eta_{j_{0}})<M$.

Since $\lambda_{j_{0}}$ is transcendental, we have $\lambda_{j_{0}}\ne\eta_{j_{0}}$.
Let $n\ge1$ be with, 
\[
(2M)^{-n-1}\le\left|\lambda_{j_{0}}-\eta_{j_{0}}\right|<(2M)^{-n}.
\]
We have, 
\[
(n+1)\log(2M)\ge-\log\left|\lambda_{j_{0}}-\eta_{j_{0}}\right|>q^{2}.
\]
Hence, we may assume that 
\begin{equation}
\frac{1}{n}\left(\left(q(M+1)+(M+2)\right)\log n+(q+1)\log L_{0}+1\right)<1,\label{eq:ineq for ves lem}
\end{equation}
where $L_{0}$ is defined in (\ref{eq:def of L_0}).

By Proposition \ref{prop:exists polynomials} and since $n$ is arbitrarily
large with respect to $M$, there exists $0\ne P\in\mathcal{P}_{L_{0}}^{(n)}$
such that $|P(\lambda_{j_{0}})|\le(2M)^{-3Mn}$, which gives, 
\[
\left|\lambda_{j_{0}}-\eta_{j_{0}}\right|\ge(2M)^{-n-1}\ge(2M)^{n}|P(\lambda_{j_{0}})|^{1/M}.
\]
From this, $\left|\lambda_{j_{0}}-\eta_{j_{0}}\right|<(2M)^{-n}$,
$\deg\eta_{j_{0}}<q$, the inequality (\ref{eq:ineq for ves lem}),
and Lemma \ref{lem:vesselin lemma}, it follows that $\eta_{j_{0}}$
is a zero of $P$ of order at least $M$. Now, by assuming $\eta_{j_{0}}<(1+\lambda_{j_{0}})/2$
and that $M$ is sufficiently large with respect to $L_{0}$ and $\lambda_{j_{0}}$,
we get a contradiction with Lemma \ref{lem:There-is-function r}.
This completes the proof of the theorem. 
\end{proof}
\bibliographystyle{plain}
\bibliography{bibfile}

$\newline$\textsc{Ariel Rapaport, Department of Mathematics, Technion, Haifa, Israel}$\newline$\textit{E-mail: }
\texttt{arapaport@technion.ac.il}

$\newline$\textsc{Haojie Ren, Department of Mathematics, Technion, Haifa, Israel}$\newline$\textit{E-mail: }
\texttt{hjren@campus.technion.ac.il}
\end{document}